\newcommand{\R}{\mathbb{R}}
\newcommand{\Z}{\mathbb{Z}}
\newcommand{\N}{\mathbb{N}}
\renewcommand{\P}{\mathbb{P}}
\newcommand{\E}{\mathbb{E}}
\newcommand{\1}{\mathbf{1}}
\newcommand{\limn}{\lim_{n\rightarrow\infty}}
\newcommand{\limm}{\lim_{m\rightarrow\infty}}
\renewcommand{\l}{\ell}
\newcommand{\heta}{\widehat\eta}
\newcommand{\teta}{\widetilde\eta}
\newcommand{\esing}{\eta^{\infty}}
\newcommand{\en}{\eta^n}
\newcommand{\esc}{\text{ escapes }}
\newcommand{\nesc}{\text{ does not escape }}
\newcommand{\issaw}{\text{ is a self-avoiding path }}
\DeclareRobustCommand{\cev}[1]{%
  \mathpalette\do@cev{#1}%
}
\newcommand{\do@cev}[2]{%
  \fix@cev{#1}{+}%
  \reflectbox{$\m@th#1\vec{\reflectbox{$\fix@cev{#1}{-}\m@th#1#2\fix@cev{#1}{+}$}}$}%
  \fix@cev{#1}{-}%
}
\newcommand{\fix@cev}[2]{%
  \ifx#1\displaystyle
    \mkern#23mu
  \else
    \ifx#1\textstyle
      \mkern#23mu
    \else
      \ifx#1\scriptstyle
        \mkern#22mu
      \else
        \mkern#22mu
      \fi
    \fi
  \fi
}
\DeclareMathOperator{\TV}{TV}
\DeclareMathOperator{\Cov}{Cov}
\DeclareMathOperator{\Var}{Var}
\DeclareMathOperator{\SAW}{SAW}
\newtheorem{theorem}{Theorem}
\newtheorem{corollary}[theorem]{Corollary}
\newtheorem{lemma}[theorem]{Lemma}
\newtheorem{proposition}[theorem]{Proposition}
\newtheorem{conjecture}[theorem]{Conjecture}
\theoremstyle{definition}
\newtheorem{definition}[theorem]{Definition}
\newtheorem{remark}[theorem]{Remark}
\numberwithin{equation}{section}
\numberwithin{theorem}{section}
\begin{document}

\title{Two-sided infinite self-avoiding walk in high dimensions}
\date{\vspace{-5ex}}
\author{Maarten Markering}
\affil{University of Cambridge}
\maketitle
\let\thefootnote\relax\footnotetext{Date: October 2, 2024}
\let\thefootnote\relax\footnotetext{Email: mjrm2@cam.ac.uk}
\let\thefootnote\relax\footnotetext{2020 Mathematics Subject Classification: 60K35, 60F15, 82B27, 82B41, 37A25}
\let\thefootnote\relax\footnotetext{Key words: self-avoiding walk, law of large numbers, ergodicity, lace expansion}

\begin{abstract}
We construct the two-sided infinite self-avoiding walk (SAW) on $\Z^d$ for $d\geq5$ and use it to prove pattern theorems for the self-avoiding walk. We show that infinite two-sided SAW is the infinite-shift limit of infinite one-sided SAW and the infinite-size limit of finite two-sided SAW. We then prove that for every pattern $\zeta$, the fraction of times $\zeta$ occurs in the SAW converges to the probability that the two-sided infinite SAW starts with $\zeta$. The convergence is in probability for the finite SAW and almost surely for the infinite SAW. Along the way, we show that infinite SAW is ergodic using a coupling technique. At the end of the paper, we pose a conjecture regarding the existence of infinite SAW in low dimensions. We show that this conjecture is true in high dimensions, thus giving a new proof for the existence of infinite SAW for $d\geq5$. The proofs in this paper rely only on the asymptotics for the number of self-avoiding paths and the SAW two-point function. Although these results were shown by Hara and Slade using the lace expansion, the proofs in this paper do not use the lace expansion and might be adapted to prove existence and ergodicity of other infinite high-dimensional lattice models.
\end{abstract}

\tableofcontents

\section{Introduction}\label{sec:intro}
The \emph{self-avoiding walk} was introduced by Flory and Orr in \cite{flory53principles, orr47statistical} as a model for long polymer chains. Since its introduction, it has been the subject of extensive research in probability theory, combinatorics, physics and computer science. Despite the simplicity of its definition, many important questions still remain unanswered. In this paper, we focus on the \emph{infinite self-avoiding walk}, introduced by Lawler in \cite{lawler89infinite}. We define \emph{two-sided infinite self-avoiding walk} and use it to prove several \emph{pattern theorems} in high dimensions, improving on Kesten's celebrated results from \cite{kesten63number} in the `60s.
 
We introduce the self-avoiding walk model in Section \ref{sec:model} and give a very  brief overview of the relevant known results. We state the main results of this paper in Section \ref{sec:results} and sketch their proofs in Section \ref{sec:sketch}. Section \ref{sec:discussion} contains a discussion of the main results. In  Section \ref{sec:outline}, we give an outline of the rest of the paper.
\subsection{Model}\label{sec:model}
A \emph{self-avoiding path} of length $n$ is a nearest-neighbour path $\omega=(\omega(0),\ldots,\omega(n))$ in the graph~$\Z^d$ that does not visit any vertex twice, i.e., $i\neq j$ implies $\omega(i)\neq\omega(j)$. Write $\SAW_n$ for the set of all self-avoiding paths $\omega$ of length $n$ such that $\omega(0)=0$, and for $x\in\Z^d$, write $\SAW_{n}(x)$ for the subset of $\SAW_n$ of paths $\omega$ such that $\omega(n)=x$. Let $c_n=|\SAW_n|$ and $c_n(x)=|\SAW_n(x)|$. The \emph{self-avoiding walk} (SAW) $\eta^n$ of length $n$ started from 0 is the random path that has as its law the uniform measure on $\SAW_n$. 

We give a very brief summary of the results we need in this paper. We refer the interested reader to \cite{bauerschmidt12lectures, madras93selfavoiding} for more background. An important question in the study of the self-avoiding walk is counting the number of self-avoiding paths~$c_n$. It follows from Fekete's lemma and submultiplicativity of~$c_n$ that there exists a constant $\mu=\mu(d)$ such that
\begin{equation}
    \mu:=\limn (c_n)^{1/n}.
\end{equation}
The constant $\mu$ is called the connective constant of the lattice $\Z^d$. For $d\geq5$, the precise scaling of $c_n$ is known. In lower dimensions, the best known rigorous bounds for the subexponential correction term of $c_n$ are still far from the conjectured expressions.

A central object in this paper is the \emph{infinite self-avoiding walk}. Note that the laws of $\eta^n$ are not a consistent family of probability measures, i.e., for most $\omega\in\SAW_n$, we have
\begin{equation}
    \begin{split}
        \P(\eta^n=\omega)\neq\sum_{\substack{\zeta\in\SAW_{n+1}\\\zeta\succeq\omega}}\P(\eta^{n+1}=\zeta),
    \end{split}
\end{equation}
where $\zeta\succeq\omega$ means that $\zeta$ is a self-avoiding path that starts with $\omega$. Consider, for example a path~$\omega$ that is ``trapped'', meaning that there does not exist $\zeta\in\SAW_{n+1}$ such that $\zeta\succeq\omega$. Then the LHS of the above equation is nonzero, whereas the RHS equals 0. Therefore, there does not exist a probability measure on infinite paths such that its $n$-step marginals equal the laws of $\eta^n$. However, it was shown by Lawler in \cite{lawler89infinite} using the lace expansion techniques developed by Slade in \cite{slade89scaling} that for all $d$ sufficiently large, there exists a random variable $\esing$ supported on the set of infinite-length self-avoiding paths such that for all $k$ and all $\zeta\in\SAW_k$,
\begin{equation}
    \begin{split}
        \P(\esing[0,k]=\zeta)=\limn\P(\eta^n[0,k]=\zeta).
    \end{split}
\end{equation}
The path $\esing$ is called the infinite self-avoiding walk. It is a random variable taking values in the space of all infinite self-avoiding paths $\SAW_\infty$. If unspecified, we take $\esing(0)=0$. The relevant sigma-algebra on $\SAW_\infty$ is the sigma-algebra generated by the events depending on a finite number of indices, which we call cylinder events. Throughout the paper, all mentioned events will always be elements of this sigma-algebra. The construction of $\eta^\infty$ was later extended to all $d\geq 5$ by Hara and Slade \cite{hara92selfavoiding}. 

In fact, much more is known for $d\geq5$. We summarize the results we need in the following theorem.
\begin{theorem}[\cite{hara92selfavoiding, hara08decay}]\label{th:countingwalks}
For all $d\geq5$, the following are true.
\begin{enumerate}[\rm{(i)}]
\item There exist constants $A=A(d)$ and $a=a(d)$ such that
\begin{equation}
    c_n\sim A\mu^n,\qquad n\rightarrow\infty
\end{equation}
and
\begin{equation}
	G(x):=\sum_{n=0}^\infty c_n(x)\mu^{-n}\sim a\|x\|^{2-d},\qquad\|x\|\rightarrow\infty.
\end{equation}
\item There exists $D=D(d)>0$ such that $n^{-1/2}\eta^n(D^{-1}n^{-1}\cdot)$, viewed as a continuous path in $\R^d$ on the time interval $[0,1]$, converges weakly in the supremum norm to standard Brownian motion.
\item For $\zeta\in\SAW_k$, let $c_n(\zeta)$ be the number of self-avoiding paths of length $n$ starting with $\zeta$. Then
\begin{equation}
    \lim_{n\rightarrow\infty}\P(\eta^n[0,k]=\zeta)=\lim_{n\rightarrow\infty}\frac{c_n(\zeta)}{c_n}=\P(\eta^\infty[0,k]=\zeta).
\end{equation}
\end{enumerate}
\end{theorem}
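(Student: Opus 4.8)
The three parts have quite different characters, and the plan is to treat them separately, taking (i) and (ii) as black-box inputs rather than reproving them: this is exactly the posture the paper adopts, and there is no point pretending otherwise.

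For orientation on (i) and (ii), the mechanism behind both is the lace expansion. One writes the two-point generating function $G_z(x)=\sum_{n\ge 0}c_n(x)z^n$ as a renewal-type convolution identity $G_z=\delta+(zJ+\Pi_z)\ast G_z$, where $J$ is the nearest-neighbour step kernel and $\Pi_z$ is an explicitly controllable ``irreducible'' kernel. For $d\ge 5$ a bootstrap argument, built on a simple-random-walk infrared bound, shows that at the critical point $z_c=1/\mu$ the kernel $\Pi_{z_c}$ is a small perturbation, so that $G=G_{z_c}$ is comparable to the lattice Green's function of a simple random walk with a renormalised diffusion constant; this yields $G(x)\sim a\|x\|^{2-d}$, and a Tauberian/generating-function analysis of $\sum_x G_z(x)=\sum_{n}c_n z^n$ upgrades the bare exponential order $\mu=\lim_n c_n^{1/n}$ to the sharp asymptotic $c_n\sim A\mu^n$. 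The same Fourier-space control of $G_z$, combined with a fourth-moment bound giving tightness, produces the Gaussian finite-dimensional distributions and tightness behind the Brownian scaling limit in (ii). These are precisely the theorems of Hara and of Hara--Slade cited in the statement, and I would simply invoke them.

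Part (iii), by contrast, is essentially bookkeeping once the construction of $\eta^\infty$ recalled just above the theorem is in hand. The middle equality is an identity, not a limit: since $\eta^n$ is by definition the uniform law on $\SAW_n$ and exactly $c_n(\zeta)$ of the paths in $\SAW_n$ begin with $\zeta$, we have $\P(\eta^n[0,k]=\zeta)=c_n(\zeta)/c_n$ for every $n\ge k$. Existence of the $n\to\infty$ limit and its identification with $\P(\eta^\infty[0,k]=\zeta)$ is nothing other than the statement, attributed above to Lawler and to Hara--Slade, that $\eta^\infty$ exists as the local (cylinder-event) limit of $\eta^n$. The only point needing a word is that these limiting values do assemble into a probability measure on $\SAW_\infty$: consistency and total mass one follow from $\sum_{\zeta\in\SAW_k}c_n(\zeta)=c_n$ together with the fact that the one-step extensions of a fixed $\zeta\in\SAW_k$ partition $\{\omega\in\SAW_{k+1}:\omega[0,k]=\zeta\}$, after which Kolmogorov extension applies; this is routine and I would dispatch it in a line.

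Accordingly, the entire difficulty sits in (i) and (ii): passing from the exponential order $\mu$ to a sharp asymptotic for $c_n$, from $c_n(x)$ to the $\|x\|^{2-d}$ decay of $G$, and establishing diffusive scaling. No argument avoiding the lace expansion (or an equally heavy substitute) is known for these, which is exactly why the paper imports them; the contribution here is to show that the two-sided walk, ergodicity, and the pattern theorems rest only on (i)--(iii) and need no further hard input.
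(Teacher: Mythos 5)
Your proposal is correct and matches the paper's treatment: the paper states Theorem~\ref{th:countingwalks} as a cited result of Hara and Hara--Slade and offers no proof of (i) or (ii), exactly as you do, and part (iii) is indeed immediate from the uniform law (for the middle equality) together with the cited existence of $\eta^\infty$ as the cylinder-event limit of $\eta^n$. Your extra remark on consistency and Kolmogorov extension is a harmless unpacking of what the citation already asserts; nothing further is needed.
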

Throughout the paper, we will make constant use of the results above. To ease notation, we will simply \textbf{assume $d\geq5$ for the rest of the paper} and will suppress this assumption in the statements of all the results. 

The lace expansion techniques to analyse SAW have also been used to obtain strong results on other high-dimensional lattice models, such as critical percolation and lattice trees. Nowadays, the theory of lace expansion is very mature and we will not attempt to give a complete summary here. Instead, we refer the interested reader to \cite{slade2004lace} for more background.

Very little is known in dimensions $2\leq d\leq 4$. In particular, the existence of the infinite self-avoiding walk is still open. The best current result in lower dimensions stems from Kesten's Pattern Theorem from `63 \cite{kesten63number}, which states that for all $\zeta$ such that $\zeta$ is not trapped, we have
\begin{equation}
    \begin{split}
        \liminf_{n\rightarrow\infty}\P(\eta^n[0,k]=\zeta)=\liminf_{n\rightarrow\infty}\frac{c_n(\zeta)}{c_n}>0.
    \end{split}
\end{equation}

\subsection{Results}\label{sec:results}
Although the infinite self-avoiding walk was introduced three decades ago, many basic properties still remain unknown. In this paper, we explore several aspects of the infinite self-avoiding walk. We first define \emph{two-sided infinite self-avoiding walk} as a random variable on bi-infinite self-avoiding paths and prove that this can be interpreted as the limit of both the finite and infinite self-avoiding walk ``as seen from the middle''. We then show that the infinite self-avoiding walk is \emph{ergodic} with respect to the shift operator by showing that events depending on two finite parts of the infinite self-avoiding walk that are very far away are approximately independent. Using this asymptotic independence result, we show that for every pattern, the fraction of times at which this pattern occurs converges in probability for the finite self-avoiding walk and almost surely for the infinite self-avoiding walk. This is a high-dimensional extension of Kesten's Pattern Theorem from the early `60s \cite{kesten63number}.

\subsubsection{Two-sided infinite self-avoiding walk}
We now state the main results. Let $\eta^{m,n}$ be \emph{two-sided, finite SAW} of length $m$ and $n$, i.e., $\eta^{m,n}(-k)=\eta_1^m(k)$ and $\eta^{m,n}(k)=\eta_2^m(k)$ for $k\geq 0$, where $\eta_1^m$ and $\eta_2^n$ are finite SAWs starting from 0 conditioned not to intersect. Let $T$ be the left-shift operator on the space of one-sided or two-sided paths in $\Z^d$, i.e., $(T\omega)(k)=\omega(k+1)-\omega(1)$. Note that $T^m\eta^n\overset{d}{=}\eta^{m,n}$. Throughout the paper, $\zeta$ denotes some finite self-avoiding path. Recall that we assume $d\geq5$ for all that follows. 
\begin{definition}\label{def:twosidedsaw}
Let $\esing_1,\esing_2$ be two independent infinite self-avoiding walks started from 0 conditioned such that $\esing_1[1,\infty)\cap\esing_2[1,\infty)=\varnothing$. The \emph{two-sided infinite self-avoiding walk} is defined as the bi-infinite random path $\heta$ given by $\heta(-n)=\esing_1(n)$ and $\heta(n)=\esing_2(n)$.
\end{definition}
A priori, it is not clear whether $\heta$ is well-defined. Indeed, we may have $\P(\esing_1[1,\infty)\cap\esing_2[1,\infty)=\varnothing)=0$. Our first theorem says that this is not the case. Moreover, $\heta$ is the infinite-length limit of a finite two-sided SAW and the infinite-shift limit of a one-sided infinite SAW.

\begin{theorem}\label{th:maintwosided}
The random variable $\heta$ is well-defined. Furthermore, $\eta^{m,n}$ converges weakly to~$\heta$ uniformly in $m$ and $n$, i.e., for all two-sided paths $\zeta$ of length $l+k$ and $\varepsilon>0$, there exist $m_0=m_0(\varepsilon,l,k),n_0=n_0(\varepsilon,l,k)$ such that for all $m\geq m_0$, $n\geq n_0$,
\begin{equation}
    |\P(\eta^{m,n}[-l,k]=\zeta)-\P(\heta[-l,k]=\zeta)|<\varepsilon.
\end{equation}
Furthermore, $T^m\eta^\infty$ converges weakly to $\heta$ as $m\rightarrow\infty$, i.e., for all $\zeta\in\SAW_k$, 
%\com{[Can probably get total variation convergence for the second part. Would need a more elaborate coupling.]}
    \begin{equation}
        \limm\P((T^m\eta^\infty)[0,k]=\zeta)=\P(\heta[0,k]=\zeta).
    \end{equation}
\end{theorem}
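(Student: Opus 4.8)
The plan is to prove the three assertions in the order stated, since each builds on the previous. First, for well-definedness of $\heta$, I would show that $\P(\esing_1[1,\infty)\cap\esing_2[1,\infty)=\varnothing)>0$. The natural approach is a second-moment / summability argument: by Theorem~\ref{th:countingwalks}(i), the infinite SAW two-point function $G(x)\sim a\|x\|^{2-d}$, and since $2(d-2)>d$ for $d\geq 5$, the convolution $\sum_{x}G(x)^2<\infty$ and more generally $\sum_x G(x)G(x+y)$ is small uniformly when $\|y\|$ is large. Concretely, I would estimate $\P(\esing_1(i)=\esing_2(j)\text{ for some }i,j)$ by summing, over all $x$, the probability that $\esing_1$ hits $x$ times the probability that $\esing_2$ hits $x$, using that the expected number of visits of $\esing$ to $x$ is controlled by $G(x)$ up to constants (this needs a comparison between the infinite-walk Green's function and $G$, which should follow from Theorem~\ref{th:countingwalks}(iii) by summing over $k$ and a dominated-convergence argument). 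If the resulting bound is not automatically $<1$, I would instead condition on a long initial segment: the tail walks $T^m\esing_1$ and $T^m\esing_2$ start far apart with high probability (by the CLT in Theorem~\ref{th:countingwalks}(ii), their endpoints are at distance $\asymp\sqrt m$), and then the sum $\sum_{\|x-y\|\gtrsim\sqrt m}G(x)G(x+\cdot)$ is $o(1)$, so with positive probability the two tails avoid each other while the initial segments, being finite self-avoiding paths, can be chosen disjoint.

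Second, for the convergence of finite two-sided SAW $\eta^{m,n}$ to $\heta$ uniformly in $m,n$: write $\P(\eta^{m,n}[-l,k]=\zeta)$ explicitly. With $\zeta=(\zeta_1,\zeta_2)$ split at the origin ($\zeta_1$ of length $l$, $\zeta_2$ of length $k$), this probability is a ratio counting pairs of disjoint finite SAWs of lengths $m,n$ whose initial segments are $\zeta_1,\zeta_2$, divided by the number of all disjoint pairs of lengths $m,n$. The strategy is to show this ratio converges to the analogous infinite-walk quantity $\P(\esing_1[0,l]=\zeta_1,\ \esing_2[0,k]=\zeta_2,\ \esing_1[1,\infty)\cap\esing_2[1,\infty)=\varnothing)/\P(\esing_1[1,\infty)\cap\esing_2[1,\infty)=\varnothing)$. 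I would do this by an inclusion–exclusion on the intersection event together with the counting asymptotics: the number of disjoint pairs of lengths $m,n$ is $c_m c_n$ times the probability that two independent finite walks avoid each other, and this avoidance probability converges (uniformly in $m,n$ large, by the same Green's-function summability as above, splitting "early" intersections, which are handled by a finite computation, from "late" ones, which are uniformly negligible) to the infinite-walk avoidance probability. The uniformity in $m$ and $n$ is exactly the point where one needs that the intersection contributions are bounded by a tail $\sum_{\|x\|\geq R}G(x)^2\to 0$ independent of $m,n$.

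Third, for $T^m\eta^\infty\Rightarrow\heta$: by definition $(T^m\esing)[0,k]$ records the walk from time $m$ to $m+k$, recentred. Decompose $\esing$ at time $m$ into its past $\esing[0,m]$ reversed and its future $\esing[m,\infty)$; conditionally on the meeting point $\esing(m)=x$ and on these being self-avoiding and disjoint, the future is (close to) an infinite SAW from $x$ and the reversed past is (close to) an independent infinite SAW from $x$ conditioned to avoid it — which is precisely the structure of $\heta$. I would make this rigorous by writing $\P((T^m\esing)[0,k]=\zeta_2)$ as a ratio of path counts: the numerator sums over length-$m$ self-avoiding paths $\omega$ to $x$ and length-$(k$ or more$)$ continuations starting with $\zeta_2$ that avoid $\omega$, and I'd push $m\to\infty$ using $c_m\sim A\mu^m$ and the Green's-function estimates to identify the limit with the $\heta$ formula; alternatively, and perhaps more cleanly, combine the first two parts: $T^m\eta^\infty$ is the limit as $n\to\infty$ of $T^m\eta^n=\eta^{m,n}$ (in distribution, by Theorem~\ref{th:countingwalks}(iii) applied to cylinder events of the recentred walk), and then invoke the already-established uniform convergence $\eta^{m,n}\Rightarrow\heta$ to exchange the limits in $m$ and $n$.

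I expect the main obstacle to be the \textbf{uniform} control of the two-walk intersection probabilities — showing that the contribution of intersections occurring after time $R$ is bounded by a quantity that tends to $0$ as $R\to\infty$ \emph{independently of the lengths $m,n$ (or of whether the walks are finite or infinite)}. This is what makes all three statements hold with the stated uniformity, and it rests on the Green's-function bound $G(x)\sim a\|x\|^{2-d}$ together with the fact that for a self-avoiding walk the expected number of visits to a point is comparable to $G$; establishing that comparison rigorously (rather than just for the simple random walk) and propagating it through the conditioning on non-intersection is the technical heart of the argument.
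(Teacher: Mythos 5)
Your proposals for the second and third assertions track the paper's proof closely: the uniform convergence of $\eta^{m,n}$ is obtained, as you suggest, by splitting the non-intersection event into a cylinder piece (handled by weak convergence of $\eta^n$ to $\esing$) and a tail piece whose probability is uniformly small by a Green's-function decay estimate (the paper's Lemma~\ref{lemma:hittinglongtime2}); and for $T^m\esing\Rightarrow\heta$ your ``cleaner'' alternative --- use $T^m\eta^n\overset{d}{=}\eta^{m,n}$, then exchange the $m$- and $n$-limits via the uniform convergence just established --- is precisely what the paper does.

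Your approach to the first assertion, however, diverges from the paper's and has a genuine gap. You propose to bound $\P(\esing_1\text{ and }\esing_2\text{ intersect})$ by $\sum_x G(x)^2$, but this first-moment bound on the expected number of intersection points is not known to be $<1$, which you acknowledge. Your fallback --- condition on long disjoint initial segments $\zeta_1,\zeta_2$ and argue the tails then avoid each other with positive probability --- leaves several things unaddressed: after conditioning via the domain Markov property, the tails are infinite SAWs conditioned to escape $\zeta_i$, not unconditioned SAWs, so the hitting estimates need to be re-established under that conditioning; and more importantly you would also need $T^m\esing_1$ to avoid $\zeta_2$ and $T^m\esing_2$ to avoid $\zeta_1$, events that your sketch does not control. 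The paper avoids all of this with a one-line combinatorial identity you have missed: two independent $m$-step SAWs from the origin that do not intersect form, after reversing one of them, a single $2m$-step self-avoiding path, so
\begin{equation}
\P\bigl(\eta_1^m[1,m]\cap\eta_2^m[1,m]=\varnothing\bigr)=\frac{c_{2m}}{c_m^2}\xrightarrow[m\to\infty]{}A^{-1}>0,
\end{equation}
and the result for infinite walks follows by taking limits over cylinder events and monotonicity (Lemma~\ref{lemma:avoidtwoinf}). This identity is the key simplification; without it the well-definedness of $\heta$ is considerably harder to establish, and your route as written does not close.
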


An ingredient in several of the proofs in this paper is the following expression for the law of the one- and two-sided infinite SAW, which is also of independent interest. Let $\omega_1\in\SAW_k$ and let $\omega_2$ be a possibly infinite self-avoiding path started from 0. We say ``$\omega_2$ escapes $\omega_1$'' if $(\omega_2+\omega_1(k))\cap\omega_1=\{\omega_1(k)\}$, i.e., if the concatenation of $\omega_1$ and $\omega_2$ forms a self-avoiding path.
\begin{theorem}\label{th:dmpalt}
Let $k\in\N$ and let $\zeta,\xi$ be self-avoiding paths of lengths $k$ and $2k$ respectively. Then for all events~$E$,
\begin{equation}
	\P(\esing[0,k]=\zeta,\,(T^k\esing)[0,\infty)\in E)=\mu^{-k}\P(\esing\esc\zeta,\,\esing[0,\infty)\in E)
\end{equation}
and 
\begin{equation}
    \begin{split}
        \P&(\heta[-k,k]=\xi)=A\mu^{-2k}\cdot\\
        &\P(\esing_1[1,\infty)\cap\xi=\esing_2[1,\infty)\cap\xi=\esing_1[0,\infty)\cap\esing_2[0,\infty)=\varnothing\mid\esing_1(0)=\xi(-k),\,\esing_2(0)=\xi(k)).
    \end{split}
\end{equation}
\end{theorem}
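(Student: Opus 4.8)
The plan is to prove both identities by \emph{path surgery} on finite self-avoiding walks, followed by a passage to the limit $n\to\infty$; the whole argument rests on one convergence lemma that upgrades the convergence in Theorem~\ref{th:countingwalks}(iii) from cylinder events to events of the (non-cylinder) form ``$\omega$ avoids a fixed finite set'' or ``$\omega_1$ never meets $\omega_2$''. That lemma is the only real obstacle, and the hypothesis $d\ge5$ enters precisely there.

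\medskip
\noindent\emph{Step 1 (convergence lemma).} I would show: for every finite $S\subseteq\Z^d\setminus\{0\}$ and every cylinder event $B$, $\P(\eta^N\cap S=\varnothing,\ \eta^N\in B)\to\P(\esing\cap S=\varnothing,\ \esing\in B)$, together with the two-walk version for independent $\eta_1^N,\eta_2^N$, finite sets $S,S'$ and a fixed $z\in\Z^d$: convergence of $\P\big(\eta_1^N\cap S=\varnothing,\ \eta_2^N\cap S'=\varnothing,\ \eta_1^N(i)\neq\eta_2^N(j)+z\ \forall i,j\big)$ to the same expression for $(\esing_1,\esing_2)$. The proof is by truncation at a time $M$: the event obtained by restricting the ranges/indices to $[0,M]$ is a cylinder event, hence converges by Theorem~\ref{th:countingwalks}(iii) (using independence in the two-walk case), while the truncation error is bounded \emph{uniformly in $N$} via $\P(\eta^N(i)=x)\le c_i(x)\,c_{N-i}/c_N\le C\mu^{-i}c_i(x)$, which follows from $\mu^m\le c_m\le C\mu^m$ (Fekete's lemma and Theorem~\ref{th:countingwalks}(i)). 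In the one-walk case this gives $\sum_{i>M}\P(\eta^N(i)=x)\le C\sum_{i>M}c_i(x)\mu^{-i}\to0$; in the two-walk case, $\sum_{i>M,\,j\ge1}\P(\eta_1^N(i)=\eta_2^N(j)+z)\le C\sum_y\big(\sum_{i>M}c_i(y)\mu^{-i}\big)G(y-z)\to0$, the point being that $G\in\ell^2(\Z^d)$ for $d\ge5$ since $G(y)\sim a\|y\|^{2-d}$. The $\esing$-side truncations converge to the full events by continuity from above. This uniform truncation estimate is the main obstacle; everything else is bookkeeping.

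\medskip
\noindent\emph{Step 2 (first identity).} Fix a cylinder event $E$ depending on coordinates $0,\dots,j$ and take $N>k+j$. A walk $\omega\in\SAW_N$ with $\omega[0,k]=\zeta$ is exactly the concatenation of $\zeta$ with a walk $\omega':=T^k\omega\in\SAW_{N-k}$ that \emph{escapes} $\zeta$, and $(T^k\omega)[0,j]\in E\iff\omega'[0,j]\in E$; this bijection yields
\[
\P\big(\eta^N[0,k]=\zeta,\ (T^k\eta^N)[0,j]\in E\big)=\frac{c_{N-k}}{c_N}\,\P\big(\eta^{N-k}\esc\zeta,\ \eta^{N-k}[0,j]\in E\big).
\]
Letting $N\to\infty$: the left side tends to $\P(\esing[0,k]=\zeta,\ (T^k\esing)[0,j]\in E)$ by Theorem~\ref{th:countingwalks}(iii) (a cylinder event), $c_{N-k}/c_N\to\mu^{-k}$ by Theorem~\ref{th:countingwalks}(i), and the last factor tends to $\P(\esing\esc\zeta,\ \esing[0,j]\in E)$ by Step~1 (with $S=(\zeta-\zeta(k))\setminus\{0\}$, since $\esing\esc\zeta$ means $\esing\cap S=\varnothing$). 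This proves the identity for cylinder $E$; since both sides are finite measures in $E$ agreeing on the $\pi$-system of cylinder events, which generates the $\sigma$-algebra, they agree for all events $E$.

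\medskip
\noindent\emph{Step 3 (second identity).} Since $T^N\eta^{2N}\overset{d}{=}\eta^{N,N}$ and $\eta^{N,N}$ converges weakly to $\heta$ (Theorem~\ref{th:maintwosided}), $\P(\heta[-k,k]=\xi)=\lim_N\P\big((T^N\eta^{2N})[-k,k]=\xi\big)$ (the probability vanishes unless $\xi(0)=0$, which we assume). Surgery at times $N-k$ and $N+k$ identifies the set of $\omega\in\SAW_{2N}$ with $(T^N\omega)[-k,k]=\xi$ with the set of pairs $(\omega_1,\omega_2)\in\SAW_{N-k}^2$ — the reversed and recentred left and right arms — such that $(\omega_1+\xi(-k))\cap\xi=\{\xi(-k)\}$, $(\omega_2+\xi(k))\cap\xi=\{\xi(k)\}$, and $(\omega_1+\xi(-k))\cap(\omega_2+\xi(k))=\varnothing$, so that
\[
\P\big((T^N\eta^{2N})[-k,k]=\xi\big)=\frac{c_{N-k}^2}{c_{2N}}\,\P\big(\text{these three constraints hold for independent }\eta_1^{N-k},\eta_2^{N-k}\big).
\]
As $N\to\infty$, $c_{N-k}^2/c_{2N}\to A\mu^{-2k}$ by Theorem~\ref{th:countingwalks}(i), and, by Step~1 (applied with $S=(\xi-\xi(-k))\setminus\{0\}$, $S'=(\xi-\xi(k))\setminus\{0\}$, $z=\xi(k)-\xi(-k)$), the probability converges to the probability of the same three constraints for independent $\esing_1,\esing_2$ translated to start at $\xi(-k)$ and $\xi(k)$; rewriting ``$(\esing_1+\xi(-k))\cap\xi=\{\xi(-k)\}$'' as ``$\esing_1[1,\infty)$ avoids $\xi$'' (and similarly for $\esing_2$) turns this into the conditional probability in the statement. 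An alternative route, not using Theorem~\ref{th:maintwosided}, is to start from Definition~\ref{def:twosidedsaw}, apply the first identity in turn to $\esing_1$ and $\esing_2$ to peel off their first $k$ steps, and invoke $\P(\esing_1[1,\infty)\cap\esing_2[1,\infty)=\varnothing)=1/A$, which itself follows from the surgery identity $c_{2N}=c_N^2\,\P(\eta_1^N[1,N]\cap\eta_2^N[1,N]=\varnothing)$ together with Step~1.
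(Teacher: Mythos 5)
Your proof is correct and follows the same overall strategy as the paper: finite-walk surgery giving the exact identities with prefactors $c_{N-k}/c_N$ and $c_{N-k}^2/c_{2N}$, a uniform-in-$N$ truncation estimate to pass the avoidance events through the limit, and the $\pi$-system argument to extend from cylinder events to all $E$. Steps~2 and~3 are essentially the paper's proof.

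The one place you diverge is Step~1. The paper obtains the needed uniform truncation estimates via Lemmas~\ref{lemma:traveldistance}--\ref{lemma:hittinglongtime2}: it conditions on $\eta^n(n)$ via the finite domain Markov property, applies the two-point bound $\P(x\in\eta^m)\lesssim\|x\|^{2-d}$, and then needs $\|\eta^n(n)\|\to\infty$ in probability, which it gets from the invariance principle of Theorem~\ref{th:countingwalks}(ii). Your Step~1 instead bounds the tail directly:
\begin{equation}
\sum_{i>M}\P(\eta^N(i)=x)\le C\sum_{i>M}c_i(x)\mu^{-i}\longrightarrow0\quad(M\to\infty),
\end{equation}
using only $\mu^m\le c_m\le C\mu^m$ and the finiteness of $G(x)=\sum_i c_i(x)\mu^{-i}$; for the two-walk case you use $G\in\ell^2(\Z^d)$ when $d\ge5$. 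This is a genuine (if modest) simplification: it removes the invariance principle from the input needed for Theorem~\ref{th:dmpalt} and replaces the displacement argument with pure counting, while delivering exactly the same uniform truncation. Your sketch of an alternative Step~3 (peeling $\esing_1,\esing_2$ via the first identity and using $\P(\esing_1[1,\infty)\cap\esing_2[1,\infty)=\varnothing)=A^{-1}$) is also viable and would avoid citing Theorem~\ref{th:maintwosided}; it is not the route the paper takes, but it is consistent with Definition~\ref{def:twosidedsaw} and the rest of your argument.
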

Taking $E=\SAW_{\infty}$, we obtain the simpler expression
\begin{equation}\label{eq:dmpaltsing}
	\P(\esing[0,k]=\zeta)=\mu^{-k}\P(\esing\esc\zeta).
\end{equation}
If we interpret $\esing$ as the ``uniform random variable" on $\SAW_\infty$, this confirms the intuition that the probability that an infinite SAW starts with a certain pattern is proportional to the fraction of infinite self-avoiding paths that escape it. Similarly, the probability that a two-sided infinite SAW starts with a certain pattern is proportional to the fraction of pairs of infinite self-avoiding paths that escape either end of it it and also do not intersect each other.

It is interesting to note that the normalising constant for $k$-length paths equals $\mu^k$. As a corollary, we obtain the following expression for $\mu$. Let $e$ be a neighbour of the origin. Then
\begin{equation}
    \mu=2d\,\P(0\not\in\esing[0,\infty)\mid\esing(0)=e).
\end{equation}
More generally, we obtain
\begin{equation}
	\mu^k=c_k\P(\esing[1,\infty)\cap\eta^k[1,k]=\varnothing).
\end{equation}
This is in agreement with the observation that $c_k\sim\mu^k\P(\eta_1^k[1,k]\cap\eta_2^k[1,k]=\varnothing)^{-1}$. For the infinite SAW, we obtain equality for all $k$, rather than an asymptotic expression.
%The author is not aware of this expression appearing elsewhere in the literature, although it might be well known in the folklore. 
%It might be possible to use this expression to, for example, prove locality of the connective constant for high-dimensional graphs without use of self-avoiding bridges [CITATION]. Of course, this result does rely on the lace expansion, which is much heavier machinery.

\subsubsection{Pattern theorems}
We now turn our attention to \emph{pattern theorems} for the self-avoiding walk. We use the word pattern to be synonymous to a fixed-length finite self-avoiding path. We say a pattern $\zeta\in\SAW_k$ \emph{occurs} at time $i$ in $\omega$ if~$\omega[i,i+k]=\zeta+\omega(i)$. A pattern is said to be a \emph{proper internal pattern} if there exists a self-avoiding path $\omega$ such that $\zeta$ occurs at least 3 times in $\omega$. In 1963 Kesten proved that every proper internal pattern will in fact occur a positive density of times in a self-avoiding walk with very high probability \cite{kesten63number}. Let $\zeta\in\SAW_k$ and let $\omega$ be some self-avoiding path in $\Z^d$. Let 
\begin{equation}
    \begin{split}
        \pi_\zeta(n,\omega)=\frac{1}{n}\sum_{i=0}^n\1_{\{\omega[i,i+k]=\zeta+\omega(i)\}}
    \end{split}
\end{equation}
be the fraction of times in $\omega$ at which the pattern $\zeta$ occurs.
\begin{theorem}[\cite{kesten63number}]\label{th:kestenpattern}
Let $d\geq1$ and let $\zeta$ be a proper internal pattern.  Then there exist $c_1,c_2,C_3>0$ such that for all $n$,
\begin{equation}
    \P(\pi_\zeta(n,\eta^n)\leq c_1)\leq C_3e^{-c_2n}.
\end{equation}
\end{theorem}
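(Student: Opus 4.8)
Since Theorem~\ref{th:kestenpattern} is Kesten's classical result, one may of course simply invoke \cite{kesten63number}; for completeness, here is how the argument runs, using only submultiplicativity of $c_n$ (so that $c_n\ge\mu^n$) together with $c_n^{1/n}\to\mu$. Write $N_n=N_n(\zeta,c_1)$ for the number of $n$-step self-avoiding paths from $0$ in which $\zeta$ occurs at most $c_1 n$ times, so that $\P(\pi_\zeta(n,\eta^n)\le c_1)=N_n/c_n$. Because $c_n\ge\mu^n$, it suffices to produce $c_1>0$ and $\rho<\mu$ with $N_n\le\rho^n$ for all large $n$: the probability is then at most $(\rho/\mu)^n$, and the finitely many remaining values of $n$ are absorbed by taking $C_3$ large.

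To bound $N_n$, fix a large integer $b$ and cut a path of length $n$ into its $\lfloor n/b\rfloor$ consecutive $b$-step blocks. Say a block is \emph{$\zeta$-free} if, translated to start at $0$, it contains no occurrence of $\zeta$. Every occurrence of $\zeta$ lies inside at most one block, so if there are at most $c_1 n$ occurrences and $c_1\le\tfrac{1}{3b}$, then at least half the blocks are $\zeta$-free once $n$ is large. Enumerating the at most $2^{\lfloor n/b\rfloor}$ possible positions of the $\zeta$-free blocks, and using that --- conditionally on the path so far --- a $\zeta$-free block admits at most $d_b$ completions and an arbitrary block at most $c_b$ (with $d_b$ the number of $\zeta$-free $b$-step SAWs, and $d_b\le c_b$), one gets $N_n\le\big(2^{1/b}(d_b c_b)^{1/(2b)}\big)^{n}$ up to a constant factor. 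As $b\to\infty$ this exponential rate approaches $\sqrt{\mu_\zeta\,\mu}$, where $\mu_\zeta:=\limsup_b d_b^{1/b}\le\mu$; so for a suitable large $b$ the rate is $<\mu$ provided we know the strict inequality $\mu_\zeta<\mu$ --- i.e. that self-avoiding paths cannot avoid $\zeta$ entirely at the exponential scale. This reduces the theorem to that inequality.

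Establishing $\mu_\zeta<\mu$ is the heart of the matter, and the only point at which the hypothesis that $\zeta$ is a \emph{proper internal pattern} enters. Fix a self-avoiding path $\omega^\ast$ in which $\zeta$ occurs at three times $i_1<i_2<i_3$; then, possibly after enlarging $\omega^\ast$, the middle occurrence is flanked on both sides by nonempty self-avoiding pieces, which yields a fixed finite ``gadget'' $\Gamma$ containing a copy of $\zeta$. The goal is a combinatorial lemma of the following shape: there exist a $\zeta$-free piece $\Gamma_0$ with the same length and net displacement as $\Gamma$, a radius $R$, and $\theta>0$ such that, for all large $b$, every $\zeta$-free $b$-step SAW has at least $\theta b$ pairwise well-separated ``sites'' at which it locally agrees with a translate of $\Gamma_0$ while staying outside the $R$-neighbourhood of that translate --- and at each such site one may replace $\Gamma_0$ by $\Gamma$ without destroying self-avoidance or changing the length. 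Since a $\zeta$-free path contains no translate of $\Gamma$ to begin with, the switched sites of the resulting path are recognisable, so the map sending a $\zeta$-free path together with the subset of its sites one chooses to switch to the resulting equal-length self-avoiding path is injective (boundedly-many-to-one at worst); counting then gives $d_b\,2^{\theta b}\le K c_b$ for a constant $K$, whence $\mu_\zeta\le 2^{-\theta}\mu<\mu$.

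The reduction in the first paragraph is immediate and the block estimate in the second is a routine union bound; the entire difficulty is the combinatorial lemma of the third step --- concretely, arranging a \emph{linear-in-length} supply of well-isolated, switchable sites inside every $\zeta$-free path, and organising the surgery so that it is invertible with bounded multiplicity. This is precisely Kesten's combinatorial core, and it is where the three guaranteed occurrences of $\zeta$ (hence an occurrence with material on both sides) are indispensable: they are exactly what makes the gadget $\Gamma$ graftable into arbitrary paths. I would isolate this lemma as a standalone statement, prove it by exploiting the flanked occurrence in $\omega^\ast$ together with a pigeonhole over bounded-size local configurations of a $\zeta$-free path, and then assemble the three pieces above.
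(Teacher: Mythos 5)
The paper does not prove Theorem~\ref{th:kestenpattern}: it is imported verbatim from \cite{kesten63number} and used as a black box, so there is no in-paper argument to compare against. Your proposal is, instead, a sketch of Kesten's original proof, so the relevant question is whether the sketch is self-contained.

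Your first two reductions are sound (modulo a small slip: an occurrence of a $k$-step pattern with $k<b$ can straddle a block boundary, so it touches up to \emph{two} blocks, not one; this only changes the constant to $c_1\le 1/(4b)$ and does not affect the structure). Granting $\mu_\zeta<\mu$, the block-counting bound $N_n\lesssim\bigl(2^{1/b}(d_b c_b)^{1/(2b)}\bigr)^n$ and the limit $\sqrt{\mu_\zeta\mu}<\mu$ do finish the theorem. However, the third step is where the entire content of the theorem lives, and as written it is a description of the lemma you would need rather than a proof of it. Producing, in \emph{every} sufficiently long $\zeta$-free self-avoiding walk, a linear-in-length family of pairwise well-separated sites that (a) locally match a fixed $\zeta$-free gadget $\Gamma_0$, (b) are isolated enough that $\Gamma_0$ can be excised and replaced by $\Gamma$ without creating self-intersections, and (c) make the resulting surgery map injective with bounded multiplicity, is exactly the hard combinatorial core of Kesten's paper; none of (a)--(c) is routine, and the existence of such a $\Gamma_0$ with matching length and displacement is itself nonobvious. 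You acknowledge this explicitly (``I would isolate this lemma\ldots and prove it by\ldots''), so this is not a hidden error, but it does mean the proposal reduces the theorem to an unproved lemma of comparable difficulty rather than proving it. Since the paper invokes the result by citation, the honest conclusion is that the correct move here is simply to cite \cite{kesten63number}, which you also note; if one does want a self-contained argument, the surgery lemma must actually be supplied.
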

It follows that if $\zeta$ is a proper internal pattern, then $\liminf_{n\rightarrow\infty}\E[\pi_\zeta(n,\eta^n)]>0$. Since this result was published, little progress has been made on the density of proper internal patterns. It was conjectured (e.g. in \cite[Section 7.5]{madras93selfavoiding}) that $\E[\pi_\zeta(n,\eta^n)]$ should in fact converge. In this paper, we prove this for $d\geq5$ in addition to several other, stronger, limit theorems.
\begin{theorem}\label{th:mainpatterns}
Let $k\in\N$ and let $\zeta\in\SAW_k$. Then
\begin{enumerate}[(i)]
    \item \begin{equation}
    \limn\E[\pi_\zeta(n,\eta^n)]=\P(\heta[0,k]=\zeta),
        \end{equation}
    \item $\pi_\zeta(n,\eta^n)$ converges in probability to $\P(\heta[0,k]=\zeta)$ as $n\rightarrow\infty$,
    \item $\pi_\zeta(n,\eta^\infty)$ converges almost surely to $\P(\heta[0,k]=\zeta)$ as $n\rightarrow\infty$.
\end{enumerate}
\end{theorem}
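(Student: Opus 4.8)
The three claims are of increasing strength and I would prove them in that order, using Theorem~\ref{th:maintwosided} as the main input together with an \emph{asymptotic independence} estimate: roughly, that two windows of the (finite or infinite) SAW that are far apart and far from the endpoints are approximately independent. Establishing this estimate by a coupling argument is the technical core, and the step I expect to be the main obstacle. Granting it, part~(i) is quick. Since the pattern indicator at time $i$ vanishes for $i>n-k$,
\begin{equation*}
    \E[\pi_\zeta(n,\eta^n)]=\frac1n\sum_{i=0}^{n-k}\P\big((T^i\eta^n)[0,k]=\zeta\big)=\frac1n\sum_{i=0}^{n-k}\P\big(\eta^{i,n-i}[0,k]=\zeta\big),
\end{equation*}
using that $T^i\eta^n$ is a two-sided finite SAW with arm-lengths $i$ and $n-i$. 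Given $\varepsilon>0$, Theorem~\ref{th:maintwosided} (with $l=0$) provides $R_1,R_2$ depending only on $\varepsilon,k$ with $|\P(\eta^{i,n-i}[0,k]=\zeta)-\P(\heta[0,k]=\zeta)|<\varepsilon$ whenever $i\ge R_1$ and $n-i\ge R_2$; at most $R_1+R_2$ indices fail this, with summands at most $1$, so $\limsup_n|\E[\pi_\zeta(n,\eta^n)]-\P(\heta[0,k]=\zeta)|\le\varepsilon$, and letting $\varepsilon\downarrow0$ gives~(i).

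For~(ii), by~(i) and Chebyshev's inequality it suffices to show $\Var(\pi_\zeta(n,\eta^n))\to0$. Writing $E_i$ for the event that $\zeta$ occurs at time $i$ in $\eta^n$ (so $\1_{E_i}\equiv0$ for $i>n-k$),
\begin{equation*}
    \Var(\pi_\zeta(n,\eta^n))=\frac1{n^2}\sum_{i,j}\Cov(\1_{E_i},\1_{E_j}),
\end{equation*}
and I split this sum into: (a)~pairs with $|i-j|\le L$; (b)~pairs with $\min(i,n-i,j,n-j)\le R$; (c)~the remaining separated, interior pairs. Cases~(a) and~(b) contain $O(Ln)$ and $O(Rn)$ terms with covariance at most $\tfrac14$, so for fixed $L,R$ they contribute $O((L+R)/n)\to0$. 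Case~(c) is exactly where the asymptotic independence estimate enters: it should give $|\Cov(\1_{E_i},\1_{E_j})|\le\delta(L)$ uniformly over $i,j$ with $i,j,n-i,n-j\ge R$ and $|i-j|\ge L$, with $\delta(L)\to0$; then $\limsup_n\Var(\pi_\zeta(n,\eta^n))\le\delta(L)$ for every $L$, hence the variance tends to $0$. The covariance decay itself I would obtain by coupling the two windows of $\eta^n$ with independent copies, bounding the error by the probability that the intervening pieces of walk interact, which in high dimensions is controlled using $G(x)\sim a\|x\|^{2-d}$ and $c_n\sim A\mu^n$ from Theorem~\ref{th:countingwalks}.

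For~(iii), note that $\pi_\zeta(n,\eta^\infty)=\frac1n\sum_{i=0}^{n}\1_{E_i}(\eta^\infty)$ is a Birkhoff average along the shift $T$; passing to the increment process of $\eta^\infty$ rewrites it as $\frac1n\sum_{i=0}^{n}g\circ\sigma^i$ for a bounded cylinder function $g$ and the (non-invertible) left shift $\sigma$. The law of $\eta^\infty$ is not $\sigma$-invariant, but the second part of Theorem~\ref{th:maintwosided} says the shifted laws $\P\circ\sigma^{-m}$, and hence their Ces\`{a}ro averages, converge on cylinders to the increment law of $\heta$; thus $\eta^\infty$ is asymptotically mean stationary with stationary mean the shift-invariant law of $\heta$. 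Applying the asymptotic independence estimate to two far-apart windows of $\eta^\infty$ shows that this stationary mean is ergodic, since approximate independence of far-apart events forces every shift-invariant event to have probability $0$ or $1$. The ergodic theorem for such processes then gives almost sure convergence of $\pi_\zeta(n,\eta^\infty)$ to a shift-invariant limit, which has the same distribution under the law of $\eta^\infty$ as under its stationary mean (these agree on invariant events) and is therefore the constant $\P(\heta[0,k]=\zeta)$. Alternatively, a sufficiently good quantitative rate for the covariance bound of~(ii) would let one prove~(iii) directly, via Borel--Cantelli along $n_m=m^2$ and the interpolation bound $|\pi_\zeta(n,\eta^\infty)-\pi_\zeta(n_m,\eta^\infty)|\to0$ for $n_m\le n\le n_{m+1}$, bypassing the ergodic-theoretic input.
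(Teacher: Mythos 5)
Your proposal follows essentially the same route as the paper: part (i) via the uniform two-sided convergence of Theorem \ref{th:maintwosided} after rewriting the summands as two-sided SAW probabilities; part (ii) via a second-moment bound whose covariance sum is split into near, boundary, and far-interior pairs, with the far pairs controlled by a coupling-based asymptotic-independence estimate (the paper's Proposition \ref{prop:ergodicitytwosided}); and part (iii) via asymptotic mean stationarity of $\eta^\infty$ with stationary mean $\heta$, ergodicity of $\heta$ from the same decorrelation, and the ergodic theorem for AMS processes. The one place your sketch is looser than the paper is the coupling itself --- you describe a single-shot coupling of the intervening arcs, whereas the paper's Sections \ref{sec:coupling}--\ref{sec:probabilityconv} use an iterative, multi-scale coupling that needs two separate estimates (positive probability of coupling at each scale, plus small probability of uncoupling once coupled) precisely because one-shot escape probabilities are only bounded below by $A^{-1}$, not close to $1$; but you correctly flag this as the technical core, and the rest of the argument is the same.
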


Note that (ii) already implies (i) since $\pi_\zeta$ is bounded, but we state it separately since we can prove (i) independently of (ii) in a much more direct way. The statement of Theorem \ref{th:mainpatterns} holds for \emph{all} patterns $\zeta$. However, if $\zeta$ is not a proper internal pattern, it follows from Theorem \ref{th:kestenpattern} that $\pi_\zeta(n,\eta^n)$ converges to 0, and so that $\P(\heta[0,k]=\zeta)>0$ if and only if $\zeta$ is a proper internal pattern.

If the random variables $\1_{\{\esing[i,i+k]=\zeta\}}$ were independent, then Theorem \ref{th:mainpatterns}.(iii) would follow immediately from the standard strong law of large numbers. However, the SAW is highly non-Markovian and there might be strong correlations between different parts of the walk. The main ingredient in the proof is showing that the correlations between far away parts of the walk are small. As a consequence, we obtain that the infinite SAW is ergodic. We say that a random variable $X$ is ergodic with respect to an operator $S$ if for all events $E$ that are $S$-invariant, meaning that~$\P(X\in E\triangle S(E))=0$, we have $\P(X\in E)\in\{0,1\}$. Note that we do not require $X$ to be stationary with respect to $S$.
\begin{theorem}\label{th:ergodicity}
The random variable $\esing$ is ergodic with respect to the shift operator $T$.
\end{theorem}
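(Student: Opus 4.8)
The plan is to show that for every $T$-invariant event $E$ and every $k\in\N$ the conditional probability $\P(\esing\in E\mid\esing[0,k])$ is the deterministic constant $\P(\esing\in E)$; by L\'evy's zero--one law this forces $\P(\esing\in E)\in\{0,1\}$. Fix a $T$-invariant $E$, so that $\P(\esing\in E\,\triangle\,T^{-1}E)=0$. Since the law of $T\esing$ is absolutely continuous with respect to that of $\esing$ (which follows from Theorem \ref{th:dmpalt}), we may iterate this: $T^{-k}E$ and $E$ agree up to a $\P$-null set for every $k\ge 0$. Moreover $T^{-k}E=\{\omega:T^k\omega\in E\}$ depends only on $\omega[k,\infty)$, so for each $N$ there is an event $E_N\in\sigma(\esing[N,\infty))$ with $\P(\esing\in E\,\triangle\,E_N)=0$; that is, $E$ is a tail event. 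Finally, invariance gives $\1_E(\esing)=\1_E(T^k\esing)$ almost surely, hence on any atom $\{\esing[0,k]=\zeta\}$ of positive probability $\P(\esing\in E\mid\esing[0,k]=\zeta)=\P(T^k\esing\in E\mid\esing[0,k]=\zeta)$, and applying Theorem \ref{th:dmpalt} to the event $\{T^k\esing\in E\}$ and to the full space, this equals
\begin{equation*}
\frac{\P(\esing\esc\zeta,\ \esing\in E)}{\P(\esing\esc\zeta)}=\P(\esing\in E\mid\esing\esc\zeta).
\end{equation*}

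It thus remains to prove that conditioning the infinite SAW to escape a fixed finite pattern $\zeta$ does not affect the probability of any tail event, i.e.\ $\P(\esing\in E\mid\esing\esc\zeta)=\P(\esing\in E)$. I would prove this by a coupling --- the ``coupling technique'' referred to in the introduction. The goal is to construct, on a common probability space, a path $\esing^{\mathrm{esc}}$ with the law of $\esing$ conditioned on $\{\esing\esc\zeta\}$ and a path $\esing^{\mathrm{free}}$ with the unconditioned law of $\esing$, such that almost surely $\esing^{\mathrm{esc}}$ and $\esing^{\mathrm{free}}$ coincide from some almost surely finite time $\tau$ onwards. Granting this, fix $N$ and the version $E_N\in\sigma(\omega[N,\infty))$ of $E$ from the previous paragraph; since $\P(\,\cdot\mid\esing\esc\zeta)\ll\P$, we have $\1_E(\esing^{\mathrm{esc}})=\1_{E_N}(\esing^{\mathrm{esc}})$ and $\1_E(\esing^{\mathrm{free}})=\1_{E_N}(\esing^{\mathrm{free}})$ almost surely, and on $\{\tau\le N\}$ the two paths agree on $[N,\infty)$, so $\1_{E_N}$ takes the same value on both. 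Hence $|\P(\esing\in E\mid\esing\esc\zeta)-\P(\esing\in E)|\le\P(\tau>N)\to 0$ as $N\to\infty$.

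Combining the two steps, $\E[\1_E\mid\esing[0,k]]=\P(\esing\in E)$ almost surely for every $k$. Since the $\sigma$-algebras $\sigma(\esing[0,k])$ increase to the full cylinder $\sigma$-algebra, L\'evy's zero--one law gives $\1_E=\lim_k\E[\1_E\mid\esing[0,k]]=\P(\esing\in E)$ almost surely, and therefore $\P(\esing\in E)=\P(\esing\in E)^2\in\{0,1\}$, which is the desired ergodicity.

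The main obstacle is the coupling of $\esing^{\mathrm{esc}}$ and $\esing^{\mathrm{free}}$, and this is where the high-dimensional hypotheses enter. The conditioning $\{\esing\esc\zeta\}$ only constrains the walk in a bounded region around $\zeta$, and transience of SAW --- a consequence of the diffusive scaling in Theorem \ref{th:countingwalks}(ii), quantified by the $\|x\|^{2-d}$ decay of the two-point function $G$ --- ensures that the walk leaves every bounded set permanently. I expect the coupling to be built by running both walks until they are far from $\zeta$ and then repeatedly attempting to match the conditional laws of the two remaining infinite paths, which differ only through the finite obstacle already avoided; the two-point function asymptotics should provide a lower bound, uniform over the current configuration, on the probability that a given attempt succeeds, so that a Borel--Cantelli argument makes the coupling succeed at an almost surely finite stage. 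Equivalently, this step says that a cylinder event and a far-away tail event of $\esing$ are asymptotically independent --- the asymptotic-independence phenomenon advertised in the introduction, which also underlies Theorem \ref{th:mainpatterns}.
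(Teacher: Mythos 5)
Your proposal is correct in substance and shares the paper's main technical idea --- a coupling of conditioned infinite SAWs built from the domain Markov property and high-dimensional hitting estimates --- but it packages the reduction from ergodicity to that coupling differently, and the packaging is arguably cleaner. Where the paper proves a coupling statement for two prefix-conditioned walks (Proposition \ref{prop:ergodicity}), deduces asymptotic independence of far-apart cylinder events (Corollary \ref{cor:ergodicity}), and then cites a mixing-implies-ergodicity lemma from Gray's book, you instead observe (correctly, using absolute continuity of the law of $T\esing$ with respect to that of $\esing$, which follows from Theorem~\ref{th:dmpalt}) that every $T$-invariant event is a tail event, use Theorem~\ref{th:dmpalt} once more to rewrite $\P(\esing\in E\mid\esing[0,k]=\zeta)=\P(\esing\in E\mid\esing\esc\zeta)$, and then invoke L\'evy's zero--one law. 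This avoids the external ergodic-theory citation and makes the role of the tail $\sigma$-algebra explicit. The coupling you describe (escape-conditioned versus unconditioned) is a minor variant of the paper's coupling of two prefix-conditioned walks; it follows by averaging the second walk's conditioning, or by running the same construction with $\zeta_2$ empty.

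One small but real imprecision: you say the coupled paths $\esing^{\mathrm{esc}}$ and $\esing^{\mathrm{free}}$ ``coincide from time $\tau$ onwards'' and, later, that on $\{\tau\le N\}$ ``the two paths agree on $[N,\infty)$.'' A coupling that makes the paths literally equal at each time $n\ge\tau$ is not feasible here (the two paths start at $0$ and, once they diverge, cannot be at the same site at the same time again, and the common suffix would have to escape both prefixes simultaneously). What the coupling actually delivers --- and what the paper's Proposition~\ref{prop:coupling} delivers --- is equality of the shifted paths, $T^\tau\esing^{\mathrm{esc}}=T^\tau\esing^{\mathrm{free}}$, i.e.\ agreement of increments after time $\tau$. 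Fortunately your argument still goes through, because the tail version $E_N$ you construct is $T^{-N}E$, and $\1_{T^{-N}E}(\omega)=\1_E(T^N\omega)$ depends only on the increment process from time $N$, not on the absolute position $\omega(N)$. You should state the coupling as increment-agreement and appeal to this translation-invariance of $E_N$ rather than to literal agreement of the paths on $[N,\infty)$.
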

The strong law of large numbers Theorem \ref{th:mainpatterns}.(iii) follows almost immediately from ergodicity of infinite SAW. In fact, by standard ergodic theorems, many more limit theorems follow from Theorem \ref{th:ergodicity}. For example, one can show that $\frac{1}{n}\text{Cap}(\esing[0,n])$ converges almost surely, with $\text{Cap}$ denoting the capacity of a set. The methods used to prove ergodicity are of independent interest and are also used to prove Theorem \ref{th:mainpatterns}.(ii). For the proof, we decorrelate parts of the SAW that are far apart. This decorrelation strategy might also be used to show ergodicity of other high-dimensional infinite lattice models with respect to a well-chosen shift operator, such as the incipient infinite cluster for critical percolation or the infinite lattice tree.

\subsection{Proof sketch}\label{sec:sketch}
\subsubsection{Proof of Theorem \ref{th:maintwosided}}
The proof that two-sided infinite SAW is well-defined follows almost immediately from the well-known fact that the probability that two SAWs of length $n$ do not intersect equals $\frac{c_{2n}}{c_n^2}$, which converges to $A^{-1}>0$ for $d\geq5$. The fact that  two-sided finite SAW and shifted one-sided infinite SAW converge weakly to two-sided infinite SAW is intuitively obvious and the proof is straightforward. The proofs rely on a number of hitting estimates for the SAW, which can be derived from the results in Theorem \ref{th:countingwalks}. We use those same hitting estimate to prove Theorem \ref{th:dmpalt}. Theorem \ref{th:mainpatterns}.(i) follows immediately from the uniform convergence of finite two-sided SAW to infinite two-sided SAW.

\subsubsection{Proof of Theorem \ref{th:mainpatterns}}
The main technical achievement of the paper is the proof of Theorem \ref{th:ergodicity}, which states that infinite SAW is ergodic. To prove ergodicity, it suffices to show that two events $E_1$ and $E_2$ depending on $\esing[0,k]$ and $\esing[m,m+k]$ are asymptotically independent as $m\rightarrow\infty$. We prove this using a coupling. Given two paths $\zeta_1,\zeta_2\in\SAW_k$, and two infinite SAWs $\esing_1,\esing_2$ conditioned to start with~$\zeta_1,\zeta_2$ respectively, we construct a coupling such that almost surely, there exists $m$ for which~$T^m\esing_1=T^m\esing_2$. Ergodicity follows from this coupling.

The coupling relies on the \emph{domain Markov property} for infinite SAW, which essentially says that conditional on $\esing[0,k]=\zeta$, the remainder of the walk $\esing[k,\infty)$ has the distribution of an infinite SAW started from $\zeta(k)$ conditioned not to intersect $\zeta$. A formal definition of the domain Markov property is given in Theorem \ref{th:dmp}. So to sample $\esing[k,\infty)$, we could sample an independent infinite SAW $\esing_0$. If $\esing_0$ escapes $\zeta$, i.e., if $\esing_0$ started from $\zeta(k)$ does not intersect $\zeta$, set $\esing[k,\infty)=\esing_0[0,\infty)+\zeta(k)$. Otherwise, resample $\esing_0$ and repeat until it does escape $\zeta$. So one way to couple $\esing_1$ and $\esing_2$ is to use the same infinite SAW $\esing_0$ in the above sampling procedure. Then~$T^k\esing_1=T^k\esing_2=\eta_0[0,\infty)$ if $\esing_0$ escapes both $\zeta_1$ and $\zeta_2$. If $\esing_0$ escapes $\zeta_1$ but not $\zeta_2$, then set $T^k\esing_1=\eta_0[0,\infty)$ and sample $T^k\esing_2$ independently and vice versa. If $\esing_0$ does not escape both $\zeta_1$ and $\zeta_2$, then resample~$\esing_0$. Under this coupling, we have $T^k\esing_1=T^k\esing_2$ with positive probability for most $\zeta_1$ and $\zeta_2$, which is not quite good enough. The goal is to construct a coupling such that $T^m\esing_1=T^m\esing_2$ with high probability for $m\gg k$.

To that aim, we propose a very similar coupling as before. However, instead of sampling the rest of the walks $\esing_1$ and~$\esing_2$ in one go, we build them bit by bit. In the first iteration, sample an infinite SAW $\esing_{0,1}$. If $\esing_{0,1}[0,\infty)$ escapes $\zeta_1$, instead of setting $\esing_1[k,\infty)=\esing_{0,1}[0,\infty)+\zeta(k)$, set $\esing_1[k,a_{1}]=\esing_{0,1}[0,a_1-k]+\zeta(k)$ for some~$a_1\in\N$. In the same way as before, use the same walk $\esing_{0,1}$ to sample $\esing_2[k,a_1]$. So $(T^k\esing_1)[0,a_1-k]=(T^k\esing_2)[0,a_2-k]$ if $\esing_{0,1}[0,\infty)$ escapes both $\esing_1[0,k]$ and $\esing_2[0,k]$. Now let $a_2>a_1$, sample an infinite SAW $\esing_{0,2}$ and couple $\esing_1[a_1,a_2]$ and $\esing[a_1,a_2]$ in the same way. Repeat this procedure for every $\l$ and a well-chosen increasing sequence $(a_\l)_{\l\in\N}$. We say the coupling is successful at iteration $\l$ if~$T^{a_{\l-1}}\esing_1[0,a_{\l}-a_{\l-1}]=T^{a_{\l-1}}\esing_2[0,a_{\l}-a_{\l-1}]$. We refer to Section \ref{sec:coupling} for a more detailed description of the coupling. If the above was difficult to fully grasp, the diagrammatic sketch in Figure \ref{fig:coupling} might be helpful.

In dimensions $d\geq5$, two infinite SAWs do not intersect with positive probability, so at each iteration $\l$, the probability that $\esing_{0,\l}$ escapes both $\esing_1[0,a_{\l-1}]$ and $\esing_2[0,a_{\l-1}]$ (and thus that the coupling is successful) is bounded away from 0. Once the walks are successfully coupled at iteration~$\l$, then $(T^{a_{\l-1}}\esing_1)[0,a_{\l}-a_{\l-1}]=(T^{a_{\l-1}}\esing_2)[0,a_{\l}-a_{\l-1}]$. So  the coupling will be unsuccessful at the next iteration only if $\esing_{0,\l+1}$ hits $\esing_1[0,a_{\l-1}]-\esing_1(a_\l)$ but not $\esing_2[0,a_{\l-1}]-\esing_2(a_\l)$ or vice versa. If the sequence $(a_\l)_{\l}$ is chosen such that $a_{\l}$ is much bigger than~$a_{\l-1}$, then the probability of this event occurring is very small. So at each iteration, there is a positive probability of the walks coupling successfully and once the walks are coupled successfully, the probability that the couplings will remain successful at next iterations is very large. Borel-Cantelli then implies that almost surely, there exists $\l$ after which all couplings are successful, which was what we wanted to show.
 
A version of this coupling is used for finite SAWs to prove Theorem \ref{th:mainpatterns}.(ii). To prove convergence in probability of $\pi_\zeta(n,\eta^n)$, we show that $\1_{\eta^n[i,i+k]=\zeta}$ and $\1_{\eta^n[j,j+k]=\zeta}$ are asymptotically independent as $|i-j|\rightarrow\infty$. To decorrelate finite parts of the SAW that are far apart, we couple two walks $\eta_1^n,\eta_2^n$ conditioned such that $\eta_1^n[i,i+k]=\zeta_1$ and $\eta_2^n[i,i+k]=\zeta_2$ in such a way that with high probability, $\eta_1^n$ and $\eta_2^n$ are the same far away from $[i,i+k]$. Instead of sampling a single infinite SAW at each iteration $\l$, we sample two finite SAWs $\eta_{-,\l}$ and $\eta_{+,\l}$. The coupling is successful at iteration $\l$ if appending $\eta_{-,\l}$ and $\eta_{+,\l}$ to respectively the beginning and end of $\eta_1^n[k-a_{\l},k+a_\l]$ and $\eta_2^n[k-a_{\l},k+a_\l]$ results in two self-avoiding paths. The proof that this coupling works is very similar to the infinite case.

\subsection{Discussion}\label{sec:discussion}
\paragraph{Quantitative convergence} In this paper, we show several laws of large numbers, but make no attempt to quantify the speed of convergence. We also do not quantify our decorrelation results for the SAW. The greatest obstacle is that there are no known good quantitative bounds on the convergence of finite SAW to infinite SAW. If we had good bounds on the convergence of finite SAW, it should be possible to obtain polynomial concentration of $\pi_\zeta$ around its mean using the decorrelation methods from this paper. It was conjectured in \cite[Section 7.5]{madras93selfavoiding} that $\pi_\zeta$ should be exponentially concentrated. Furthermore, under polynomial concentration, it is not difficult to obtain a CLT for $\pi_\zeta$.

\paragraph{Self-avoiding polygons and bridges}
In this paper, we restrict our attention to self-avoiding walks. Two related random paths are \emph{self-avoiding polygons} and \emph{self-avoiding bridges}. A self-avoiding polygon of length $2n$ is a self-avoiding walk of length $2n$ conditioned to return to the origin. This can also be viewed as the union of two self-avoiding walks of length $n$ conditioned to intersect only at 0 and at its endpoints. Thus, as $n\rightarrow\infty$, one would expect the self-avoiding polygon to converge weakly to two-sided infinite SAW. However, this does not immediately follows from weak convergence of finite two-sided SAW, since the probability of the event that the two SAWs meet at its endpoints tends to 0.

The infinite self-avoiding bridge was constructed by Kesten in \cite{kesten63number} as the limit of SAW conditioned such that the first coordinate is positive at every step. The two-sided SAW should also be the infinite-shift limit of the infinite self-avoiding bridge.

\paragraph{Infinite SAW and the domain Markov property} Existence of infinite SAW for $2\leq d\leq 4$ is one of the main open problems in the theory of self-avoiding walks. Instead of showing that the limit of $\P(\eta^n[0,k]=\zeta)$ exists for every $\zeta$, we propose a different characterization of infinite SAW in Section \ref{sec:conj}. Recall the informal statement of the domain Markov property from Section \ref{sec:sketch}. It seems clear that infinite SAW, if it exists, should satisfy the domain Markov property. We claim that there is in fact only one measure on infinite self-avoiding paths that is symmetric and satisfies the domain Markov property. Then one could simply define the law of the infinite SAW to be the unique symmetric measure satisfying the DMP. For $d\geq5$, this conjecture is true and leads to an alternative proof of existence of infinite SAW. We are not able to prove this conjecture in lower dimensions, but we give some supporting evidence. 

\paragraph{Ergodicity of other models} The coupling in this paper relies mostly on the domain Markov property and some hitting estimates. Therefore, the techniques in this paper can be extended to prove ergodicity of other high-dimensional infinite lattice models for which the two-point functions are known and which satisfy some sort of domain Markov property, such as the infinite lattice tree and the infinite incipient cluster (IIC) for critical percolation. 

The lattice tree of size $n$ is the uniform measure of connected subtrees of $\Z^d$ containing 0 of size~$n$. The weak limit of finite lattice trees is not known to exist. However, in high dimensions a lot is known about the number of lattice trees and the two-point function due to the lace expansion. It might be possible to show existence of the infinite lattice tree using the same methods as in this paper. In any case, we can consider subsequential limits of the finite lattice tree. The domain Markov property for the infinite lattice tree can be formulated at pivotal edges. The infinite lattice tree consists of an infinite one-sided backbone of pivotal edges with finite trees hanging off it. Each pivotal edge $e=(e_-,e_+)$ divides the tree into two disjoint clusters, one finite part containing~$0$ and~$e_-$ and one infinite part containing $e_+$. Conditional on the finite cluster being equal to $\zeta$, the rest of the tree is distributed as an infinite lattice tree started from $e_+$ conditioned not to intersect~$\zeta$. Using the exact same coupling as for the infinite SAW, it could be possible to show existence of the infinite lattice tree and show that it is ergodic with respect to the operator that shifts along the pivotal backbone.

The IIC was shown to exist in high dimensions by J\'arai and Van der Hofstad in \cite{hofstad04incipient} as the weak limit of the critical percolation cluster of 0 conditioned to be connected to $x\in\Z^d$ as~$\|x\|\rightarrow\infty$. The IIC satisfies a domain Markov property at strongly pivotal edges, which are pivotal edges for which the corresponding finite and infinite clusters do not share any neighbours except for at the pivotal edge itself. In a forthcoming paper, the author hopes to prove ergodicity for the IIC using the coupling at strongly pivotal edges.

The loop-erased random walk (LERW) also satisfies a variation of the domain Markov property: the remainder of a LERW conditioned to start with a path $\zeta$ is distributed as the loop-erasure of a random walk conditioned to escape $\zeta$. Ergodicity of LERW for $d\geq5$ was shown by Lawler in~\cite{lawler83connective} and for $d=4$ by the author in \cite{markering24law}. The proofs do not use the coupling technique from this paper, although the coupling might be used to prove ergodicity in lower dimensions. Several other decorrelation results have been shown in low dimensions, but most of them concern decorrelation in space, rather than time, which is what is needed for ergodicity.
 
\subsection{Outline}\label{sec:outline}
We first derive some hitting estimates for SAW in Section \ref{sec:hitting} that will be needed throughout the paper. In Section \ref{sec:twosided}, we construct infinite two-sided SAW and prove Theorem \ref{th:maintwosided} and Theorem~\ref{th:mainpatterns}.(i). In Section \ref{sec:coupling}, we prove Theorem \ref{th:mainpatterns}.(iii). Along the way, we prove Theorem \ref{th:ergodicity} using a coupling method. In Section \ref{sec:probabilityconv}, we prove Theorem \ref{th:mainpatterns}.(ii). In Section \ref{sec:conj} we give an alternative definition of infinite SAW and prove that it agrees with the current definition for $d\geq5$.

\section{Hitting estimates}\label{sec:hitting}
We prove several estimates on the displacement of SAW and the probability that SAW hits vertices. The most important thing to note is that the estimates are all uniform over the length of the SAW. We first state the \emph{domain Markov property} for finite SAWs. This property follows immediately from the definition of SAW and will be used throughout the paper. We will later prove an infinite-length analogue of this result.
\begin{lemma}[Finite domain Markov property]\label{lemma:dmpfin}
Let $m\geq k$ and let $\zeta,\xi$ be self-avoiding paths of length $k$ and $m-k$ respectively. Then
\begin{equation}
    \begin{split}
        \P(\eta^m[k,m]=\xi\mid\eta^m[0,k]=\zeta)=\P(\eta^{m-k}[0,m-k]=\xi\mid\eta^{m-k}[1,m-k]\cap\zeta=\varnothing,\,\eta^{m-k}(0)=\zeta(k)).
    \end{split}
\end{equation}
\end{lemma}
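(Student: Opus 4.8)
The plan is to unwind both conditional probabilities into ratios of cardinalities of sets of self-avoiding paths and then identify the two counting problems via the obvious concatenation bijection. Since $\eta^m$ is uniform on $\SAW_m$, the left-hand side is
$$\frac{|\{\omega\in\SAW_m:\omega[0,k]=\zeta,\ \omega[k,m]=\xi\}|}{|\{\omega\in\SAW_m:\omega[0,k]=\zeta\}|}$$
(well-defined exactly when $c_m(\zeta)=|\{\omega\in\SAW_m:\omega[0,k]=\zeta\}|>0$; otherwise the statement is vacuous). The key observation is that specifying a nearest-neighbour path $\omega$ of length $m$ with $\omega[0,k]=\zeta$ is the same as specifying its terminal segment $\psi:=\omega[k,m]$, a nearest-neighbour path of length $m-k$ with $\psi(0)=\zeta(k)$, and that such an $\omega$ is self-avoiding if and only if $\psi$ is self-avoiding and $\psi[1,m-k]\cap\zeta=\varnothing$ — that is, precisely when $\psi$ escapes $\zeta$.

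First I would use this bijection to rewrite the numerator of the left-hand side as $\1\{\xi(0)=\zeta(k),\ \xi[1,m-k]\cap\zeta=\varnothing\}$ and the denominator as the number of self-avoiding paths of length $m-k$ that start at $\zeta(k)$ and satisfy $\psi[1,m-k]\cap\zeta=\varnothing$. Then I would read the right-hand side the same way: interpreting ``$\eta^{m-k}$ conditioned on $\eta^{m-k}(0)=\zeta(k)$'' as the uniform law on self-avoiding paths of length $m-k$ based at $\zeta(k)$ (equivalently the translate $\eta^{m-k}+\zeta(k)$), the right-hand side becomes $\1\{\xi(0)=\zeta(k),\ \xi[1,m-k]\cap\zeta=\varnothing\}$ divided by that same count. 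The two expressions now coincide term by term, which is the claim. (If one instead prefers $\xi$ and $\eta^{m-k}$ to start at $0$, the identical computation works after translating everything by $\zeta(k)$ and reading $\eta^m[k,m]=\xi$ as $\eta^m[k,m]=\xi+\zeta(k)$.)

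I expect the only real obstacle to be notational bookkeeping rather than anything substantive: keeping the translation by $\zeta(k)$ consistent across the two sides, carefully distinguishing the index range $[1,m-k]$ (which drops the junction vertex $\zeta(k)$, shared with $\zeta$) from $[0,m-k]$, and observing that the conditioning events on both sides are non-degenerate under exactly the same hypothesis $c_m(\zeta)>0$. Beyond that, the lemma is an immediate consequence of the definition of the uniform measure together with the concatenation bijection between length-$m$ self-avoiding paths extending $\zeta$ and length-$(m-k)$ self-avoiding paths escaping $\zeta$.
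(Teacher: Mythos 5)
Your proposal is correct and follows essentially the same route as the paper's proof: both unwind the two conditional probabilities into ratios of path counts and invoke the bijection between length-$m$ self-avoiding paths extending $\zeta$ and length-$(m-k)$ self-avoiding paths escaping $\zeta$. The paper records this more tersely by writing both sides as $c_m(\zeta)^{-1}$ (implicitly on the event that $\xi$ escapes $\zeta$), whereas you carry the indicator $\1\{\xi(0)=\zeta(k),\,\xi[1,m-k]\cap\zeta=\varnothing\}$ explicitly, but there is no difference of substance.
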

\begin{proof}
Note that each self-avoiding path of length $m$ that starts with $\zeta$ is the concatenation of $\zeta$ and a self-avoiding path of length $m-k$ that escapes $\zeta$. Recall the definition of~$c_n(\zeta)$ from Theorem~\ref{th:countingwalks}, which is the number of all self-avoiding paths of length $m$ that start with $\zeta$, which by the above equals the number of all self-avoiding paths of length $m-k$ starting at the tip of $\zeta$ that do not intersect $\zeta$. So
\begin{equation}
	\begin{split}
		\P(\eta^m[k,m]=\xi\mid\eta^m[0,k]=\zeta)=&\frac{c_m^{-1}}{\frac{c_m(\zeta)}{c_m}}=c_m(\zeta)^{-1}=\frac{c_{m-k}^{-1}}{\frac{c_m(\zeta)}{c_{m-k}}}\\
		=&\P(\eta^{m-k}[0,m-k]=\xi\mid\eta^{m-k}[1,m-k]\cap\zeta=\varnothing,\,\eta^{m-k}(0)=\zeta(k)),
	\end{split}
\end{equation}
which concludes the proof.
\end{proof}

We then prove a rather weak lemma on the displacement of the SAW. The exact exponent $\frac{1}{4}$ in the lemma is not relevant, only that it is smaller than $\frac{1}{2}$.
\begin{lemma}\label{lemma:traveldistance}
Let $\varepsilon>0$. Then there exists $n_0=n_0(\varepsilon)$ such that for all $m\geq n\geq n_0$ (including $m=\infty$),
\begin{equation}
    \P(\|\eta^m(n)\|\leq n^{1/4})<\varepsilon.
\end{equation}
\end{lemma}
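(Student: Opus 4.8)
The plan is to leverage the two facts we have available: the Brownian-motion scaling limit (Theorem \ref{th:countingwalks}(ii)) for finite SAW, and the $x$-space asymptotics of the two-point function $G(x)\sim a\|x\|^{2-d}$ together with $c_n\sim A\mu^n$. The key point is that the bound must be uniform over $m\geq n$, including $m=\infty$. I would first handle the finite case $m<\infty$ for the specific value $m=n$ using the scaling limit, then bootstrap to general $m\geq n$ and finally to $m=\infty$ using the domain Markov property and a ``reversal''/monotonicity-type argument, or alternatively prove everything at once directly from $G(x)$ and $c_n(x)$ estimates.

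First I would reduce to a statement about $c_n(x)$. We have
\begin{equation}
    \P(\|\eta^m(n)\|\leq n^{1/4}) = \sum_{\|x\|\leq n^{1/4}} \frac{c_m(x)}{c_m},
\end{equation}
where $c_m(x)$ here should be read as the number of length-$m$ self-avoiding paths whose $n$-th vertex is $x$ (a sum of $c_n(y)\,(\text{number of length-}(m-n)\text{ escapes of the corresponding path})$ over $\omega\in\SAW_n(y)$). For $m=n$ this is exactly $\sum_{\|x\|\leq n^{1/4}} c_n(x)/c_n = \P(\|\eta^n(n)\|\leq n^{1/4})$, which tends to $0$ by Theorem \ref{th:countingwalks}(ii): rescaling by $n^{1/2}$, the endpoint converges to $D\cdot B_1$, a non-degenerate Gaussian, so $\P(\|\eta^n(n)\|\leq n^{1/4}) = \P(\|n^{-1/2}\eta^n(n)\| \leq n^{-1/4}) \to \P(B_1 = 0) = 0$. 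That gives the lemma for $m=n$.

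For general finite $m>n$, I would condition on $\eta^m[0,n]$ and use the finite domain Markov property (Lemma \ref{lemma:dmpfin}): given $\eta^m(n)=x$, the distribution of $\eta^m[0,n]$ reversed is that of a length-$n$ SAW conditioned on an escape event. Concretely, I would write
\begin{equation}
    \P(\|\eta^m(n)\|\leq n^{1/4}) = \sum_{\omega\in\SAW_n,\ \|\omega(n)\|\leq n^{1/4}} \frac{c_m(\omega)}{c_m},
\end{equation}
and bound $c_m(\omega)\leq c_{m-n}\cdot(\text{const})$ after noting $c_m(\omega)$ counts escapes; using $c_m\sim A\mu^m$ and $c_{m-n}\sim A\mu^{m-n}$ one gets $c_m(\omega)/c_m \leq C\mu^{-n}$ uniformly in $m$, hence the sum is $\leq C\mu^{-n}\,c_n(B(0,n^{1/4}))$ where $c_n(B(0,n^{1/4})) = \sum_{\|x\|\le n^{1/4}} c_n(x)$. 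Since $c_n(B(0,n^{1/4}))/c_n \to 0$ and $c_n\sim A\mu^n$, we get $\mu^{-n} c_n(B(0,n^{1/4})) = o(1)$, uniformly in $m$. The case $m=\infty$ follows the same way using the domain Markov property for infinite SAW (Theorem \ref{th:dmp}) — or more simply, since $\esing$ is a weak limit of $\eta^n$ on cylinder events and $\{\|\eta^m(n)\|\le n^{1/4}\}$ is a cylinder event, the bound for finite $m$ passes to the limit $m\to\infty$.

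The main obstacle I anticipate is getting the uniform-in-$m$ control on $c_m(\omega)/c_m$ cleanly: one needs $c_m(\omega) \le c_{m-n}$ (trivially true, since escapes of $\omega$ of length $m-n$ are in particular SAWs of length $m-n$) combined with the \emph{two-sided} asymptotic $c_m/c_{m-n} \to \mu^n$, which does hold for $d\ge 5$ by $c_n\sim A\mu^n$ but requires $m-n\to\infty$; for $m-n$ bounded one instead just uses submultiplicativity $c_m \ge c_{m-n}$... wait, that's the wrong direction. The correct robust bound is $c_m(\omega) \le c_{m-n}$ and $c_m \ge c_n(\omega(n))\cdot(\text{1 if any length-}(m-n)\text{ escape exists})$, which is awkward when $\omega$ is trapped. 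Cleanest is: $c_m \ge c_{m-n} \cdot \min$(escape probability), but the escape probability of a fixed short path is not bounded below uniformly. So the honest route is to pass to the infinite-SAW limit first — prove the bound for $m=\infty$ using Theorem \ref{th:dmp} and \eqref{eq:dmpaltsing}, which gives $\P(\esing[0,k]=\zeta) = \mu^{-k}\P(\esing \esc \zeta) \le \mu^{-k}$, hence $\P(\|\esing(n)\| \le n^{1/4}) \le \mu^{-n} c_n(B(0,n^{1/4})) = o(1)$ — and then obtain the finite-$m$ case for $m\ge n_0$ from weak convergence $\eta^m \Rightarrow \esing$ plus the already-established $m=n$ base case, interpolating. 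I would structure the final proof around the infinite case as the workhorse.
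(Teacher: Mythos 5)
Your first, combinatorial approach is correct and is essentially the paper's proof re-phrased: the paper conditions via the finite domain Markov property (Lemma \ref{lemma:dmpfin}) to write $\P(\|\eta^m(n)\|\le n^{1/4})$ as $\P(\|\eta_1^n(n)\|\le n^{1/4}\mid \eta_1^n[1,n]\cap\eta_2^{m-n}[1,m-n]=\varnothing)$, drops the conditioning at the cost of a factor $\P(\text{non-intersection})^{-1}=\frac{c_n c_{m-n}}{c_m}$ which is bounded uniformly in $m,n$, and then invokes the Brownian scaling limit for the $m=n$ marginal. Your identity $\P(\|\eta^m(n)\|\le n^{1/4})=\sum_{\omega}c_m(\omega)/c_m$ together with $c_m(\omega)\le c_{m-n}$ and $c_{m-n}/c_m\lesssim\mu^{-n}$ is the same estimate unwound, and the $m=\infty$ case passing through weak convergence on a cylinder event is also exactly what the paper does.

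Where you go astray is in talking yourself out of the argument at the end. The uniform bound $c_{m-n}/c_m\le C\mu^{-n}$ is not actually problematic for bounded $m-n$: from Fekete's lemma one has $c_k\ge\mu^k$ for \emph{all} $k$, and from $c_k\sim A\mu^k$ the ratio $c_k/\mu^k$ is bounded above, say by $C_2$. Hence $c_{m-n}/c_m\le C_2\mu^{m-n}/\mu^m=C_2\mu^{-n}$ uniformly over all $m\ge n\ge 0$, with no asymptotics on $m-n$ needed; this is precisely what underlies the paper's $\frac{c_m}{c_n c_{m-n}}\gtrsim A^{-1}$. More importantly, your proposed ``cleanest'' fallback --- proving the $m=\infty$ case first via Theorem \ref{th:dmp} and \eqref{eq:dmpaltsing} --- would be circular within this paper's logical structure: Theorem \ref{th:dmpalt} (hence \eqref{eq:dmpaltsing} and Theorem \ref{th:dmp}) is proved using Lemma \ref{lemma:hittinglongtime}, which itself invokes the present Lemma \ref{lemma:traveldistance} to send $\E[\|x-\eta^n(n)\|^{2-d}]\to 0$. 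So stick with your first route; it is sound and matches the paper once you drop the spurious concern about small $m-n$.
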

\begin{proof}
Let $m$ be finite. Let $\eta_1^{n}$ and $\eta_2^{m-n}$ be independent self-avoiding walks started from 0. Then by Lemma \ref{lemma:dmpfin},
\begin{equation}
	\begin{split}
		\P(\|\eta^m(n)\|\leq n^{1/4})=&\P(\|\eta_1^n(n)\|\leq n^{1/4}\mid\eta_1^n[1,n]\cap\eta_2^{m-n}[1,m-n]=\varnothing)\\
		\leq&\P(\eta_1^n[1,n]\cap\eta_2^{m-n}[1,m-n]=\varnothing)^{-1}\P(\|\eta_1^n(n)\|\leq n^{1/4})
	\end{split}
\end{equation}
Note that $\P(\eta_1^n[1,n]\cap\eta_2^{m-n}[1,m-n]=\varnothing)=\frac{c_{m}}{c_nc_{m-n}}\gtrsim A^{-1}$. Furthermore, $n^{-1/2}\eta^n$ converges weakly to Brownian motion as stated in Theorem \ref{th:countingwalks}. Hence, $\P(\|\eta_1^n(n)\|\leq n^{1/4})\rightarrow0$ as $n\rightarrow\infty$. We conclude the proof of the finite case by noting that the above bounds do not depend on $m$. The case $m=\infty$ follows immediately from the fact that the event $\{\|\eta^\infty(n)\|\leq k\}$ depends only on finitely many indices.
\end{proof}

The second lemma says that the two-point function of infinite SAW is of the same order as the two-point function of simple random walk.
\begin{lemma}\label{lemma:twopoint}
There exists $C>0$ such that for all $m$ (including $m=\infty$) and $x\in\Z^d$,
\begin{equation}
	\P(x\in\eta^m[0,m])\leq C\|x\|^{2-d}.
\end{equation}
\end{lemma}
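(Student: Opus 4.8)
The plan is to control $\P(x\in\eta^m[0,m])$ by decomposing according to the (unique, by self-avoidance) time at which the walk visits $x$, namely $\{x\in\eta^m[0,m]\}=\bigsqcup_{j=0}^{m}\{\eta^m(j)=x\}$, and then bounding $\P(\eta^m(j)=x)$ term by term so that the sum reproduces the SAW two-point function $G$ from Theorem~\ref{th:countingwalks}, whose decay is exactly $\|x\|^{2-d}$.

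First I would treat finite $m$. For $j\le m$ and $\zeta\in\SAW_j(x)$, the uniform law of $\eta^m$ gives $\P(\eta^m[0,j]=\zeta)=c_m(\zeta)/c_m$, and, exactly as in the proof of Lemma~\ref{lemma:dmpfin}, $c_m(\zeta)$ is the number of self-avoiding paths of length $m-j$ issued from $\zeta(j)$ that avoid $\zeta$, so $c_m(\zeta)\le c_{m-j}$. Since $c_n\sim A\mu^n$ there is a constant $C_0\ge1$ with $C_0^{-1}\mu^n\le c_n\le C_0\mu^n$ for all $n$, hence $c_{m-j}/c_m\le C_0^2\mu^{-j}$ uniformly in $m\ge j$. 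Summing over $\zeta\in\SAW_j(x)$ gives $\P(\eta^m(j)=x)\le C_0^2\,c_j(x)\mu^{-j}$, and therefore
\begin{equation*}
\P(x\in\eta^m[0,m])\le C_0^2\sum_{j=0}^{m}c_j(x)\mu^{-j}\le C_0^2\sum_{j=0}^{\infty}c_j(x)\mu^{-j}=C_0^2\,G(x).
\end{equation*}
To finish I need $G(x)\le C\|x\|^{2-d}$ for $x\ne0$: this follows from $G(x)\sim a\|x\|^{2-d}$ as $\|x\|\to\infty$ together with the finiteness of $G(x)$ at each fixed $x$ (part of Theorem~\ref{th:countingwalks}) and the fact that $\|x\|^{2-d}$ is bounded below on the finite set of remaining small values of $x$; for $x=0$ the asserted inequality is vacuous under the convention $\|0\|^{2-d}=\infty$.

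For $m=\infty$ I would repeat the same computation with $\eta^\infty$ in place of $\eta^m$. The only new input is $\P(\eta^\infty[0,j]=\zeta)\le C_0^2\mu^{-j}$, which follows from Theorem~\ref{th:countingwalks}(iii): $\P(\eta^\infty[0,j]=\zeta)=\lim_m c_m(\zeta)/c_m$, and each ratio is $\le C_0^2\mu^{-j}$ by the bound just used. Using countable additivity for $\{x\in\eta^\infty[0,\infty)\}=\bigsqcup_{j\ge0}\{\eta^\infty(j)=x\}$ then yields $\P(x\in\eta^\infty[0,\infty))\le C_0^2\,G(x)\le C\|x\|^{2-d}$.

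Every step here is routine; the substantive inputs are only the asymptotics of $c_n$ and of $G$ from Theorem~\ref{th:countingwalks}. The one point I would be careful about is upgrading the asymptotic estimate $G(x)\sim a\|x\|^{2-d}$ to a bound valid for \emph{every} $x\ne0$, which is precisely where one invokes the finiteness of $G(x)$ at the finitely many small $x$.
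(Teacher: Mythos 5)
Your proposal is correct and follows essentially the same route as the paper: decompose according to the hitting time of $x$, bound the number of continuations by $c_{m-j}$, use $c_n\sim A\mu^n$ to control the ratio $c_{m-j}/c_m$ uniformly, and sum to obtain $G(x)\lesssim\|x\|^{2-d}$. You are slightly more careful than the paper in two places where the published proof glosses over routine points: you explain how the asymptotic $G(x)\sim a\|x\|^{2-d}$ yields a bound for all $x\neq0$, and you supply the $m=\infty$ case explicitly (the paper's proof only writes out the finite case).
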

\begin{proof}
Let $m$ be finite. Each self-avoiding path of length $m$ passing through $x$ is the concatenation of a self-avoiding path of length $n$ ending at $x$ for some $\|x\|\leq n\leq m$ and a self-avoiding path of length $m-n$. So by Theorem \ref{th:countingwalks}, there exist constants $C_1,C_2>0$ such that
\begin{equation}
	\begin{split}
		\P(x\in\eta^m[0,m])\leq&\sum_{n=0}^m\frac{c_n(x)c_{m-n}}{c_m}\\
		\leq& C_1\sum_{n=0}^{m} c_n(x)\frac{\mu^{m-n}}{\mu^m}\\
		\leq& C_1\sum_{n=0}^\infty c_n(x)\mu^{-n}\\
		\leq& C_1C_2\|x\|^{2-d}.
	\end{split}
\end{equation}
The proof is completed by choosing $C=C_1C_2$.
\end{proof}

The following lemma gives a uniform bound on the probability that the tail of a SAW hits a vertex.
\begin{lemma}\label{lemma:hittinglongtime}
Let $x\in\Z^d$ and $\varepsilon>0$. Then there exists $n_0=n_0(x,\varepsilon)$ such that for all $m\geq n\geq n_0$ (including $m=\infty$),
\begin{equation}
	\begin{split}
		\P(x\in\eta^m[n,m])\lesssim\E[\|x-\eta^n(n)\|^{2-d}]<\varepsilon.
	\end{split}
\end{equation}
\end{lemma}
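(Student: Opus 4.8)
The plan is to bound the probability that the tail $\eta^m[n,m]$ ever reaches the fixed vertex $x$ by cutting the walk at time $n$: conditionally on the first $n$ steps, the tail is — by the domain Markov property, Lemma \ref{lemma:dmpfin} — a self-avoiding walk of length $m-n$ started from $\eta^m(n)$ and conditioned to avoid the past, and since at time $n$ the walk is typically at distance $\asymp\sqrt n$ from the origin, the two-point estimate of Lemma \ref{lemma:twopoint} makes it unlikely for this tail to hit $x$. I would first treat $m$ finite, with a bound that is uniform in $m$ and depends on $n$ only through $\eta^n(n)$; the case $m=\infty$ then follows by writing $\{x\in\eta^\infty[n,\infty)\}=\bigcup_N\{x\in\eta^\infty[n,n+N]\}$ and passing to the limit using convergence of finite-dimensional marginals, Theorem \ref{th:countingwalks}(iii).

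So fix $m$ finite. Either via Lemma \ref{lemma:dmpfin}, or more directly by decomposing a length-$m$ self-avoiding path that meets $x$ after time $n$ at the vertex $y$ it occupies at time $n$ and at its unique visit to $x$, one obtains for each $y$
\[
\P\big(x\in\eta^m[n,m],\,\eta^m(n)=y\big)\ \le\ \frac{c_n(y)}{c_m}\sum_{i=0}^{m-n}c_i(x-y)\,c_{m-n-i}\ \le\ C_1\,\mu^{-n}\,c_n(y)\,G(x-y),
\]
using $c_j\le C_1\mu^j$, $c_m\ge\mu^m$, and $\sum_i c_i(z)\mu^{-i}=G(z)$. (In the Lemma \ref{lemma:dmpfin} version, the conditioning factor $\P(\text{tail avoids the past})$ cancels exactly as in the computation there, leaving the free two-point function of the tail, which is $\le C_1\,G(x-y)$ by the proof of Lemma \ref{lemma:twopoint}.) Summing over $y$ and writing $\mu^{-n}c_n(y)=(c_n\mu^{-n})\,\P(\eta^n(n)=y)$ with $c_n\mu^{-n}\le 2A$ for $n$ large (Theorem \ref{th:countingwalks}(i)) yields
\[
\P\big(x\in\eta^m[n,m]\big)\ \le\ C\,\E\big[G(x-\eta^n(n))\big]\ \lesssim\ \E\big[\|x-\eta^n(n)\|^{2-d}\big]
\]
for all $m\ge n\ge n_1$, where the last step uses $G(z)\asymp\|z\|^{2-d}$ for large $\|z\|$ together with $G(0)=1$ (so the $\eta^n(n)=x$ term is read as a bounded quantity, not $+\infty$).

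It remains to check this is $<\varepsilon$ for $n\ge n_0(x,\varepsilon)$. Given $\delta>0$, Lemma \ref{lemma:traveldistance} gives $\P(\|\eta^n(n)\|\le n^{1/4})<\delta$ for $n$ large; on the complementary event $\|x-\eta^n(n)\|\ge n^{1/4}-\|x\|\to\infty$, so $G(x-\eta^n(n))\le\sup_{\|z\|\ge n^{1/4}-\|x\|}G(z)\to0$ since $G(z)\to0$. As $G$ is bounded, $\E[G(x-\eta^n(n))]\le\|G\|_\infty\,\delta+\sup_{\|z\|\ge n^{1/4}-\|x\|}G(z)$, which is $<\varepsilon$ once $\delta$ is small and $n$ large. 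There is no serious obstacle; the only points needing care are the bookkeeping at the event $\eta^n(n)=x$ (where $\|x-\eta^n(n)\|^{2-d}$ must be interpreted via $G$, equivalently via $(\|\cdot\|\vee1)^{2-d}$) and making the constant in the $\lesssim$ bound uniform over all large $n$, which forces $n_0$ to be large enough that $c_n\mu^{-n}\le 2A$.
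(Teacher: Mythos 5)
Your proof is correct and follows the same core strategy as the paper: cut the walk at time $n$, bound the probability that the tail hits $x$ by the two-point function via Lemma~\ref{lemma:twopoint}, then use Lemma~\ref{lemma:traveldistance} to show $\E[\|x-\eta^n(n)\|^{2-d}]\to0$, and pass to $m=\infty$ through finite-dimensional marginals. The paper simply invokes Lemma~\ref{lemma:dmpfin} and Lemma~\ref{lemma:twopoint} rather than re-deriving them through your explicit path decomposition, but the computations are equivalent; your careful handling of the $\eta^n(n)=x$ term via $G(0)=1$ is a small point the paper glosses over.
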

\begin{proof}
Let $m$ be finite. Let $\eta_1^n$ and $\eta_2^{m-n}$ be independent SAWs starting from 0. Then by Lemmas \ref{lemma:dmpfin} and \ref{lemma:twopoint},
\begin{equation}
    \begin{split}
        \P(x\in\eta^m[n,m])=&\P(x-\eta_1^n(n)\in\eta_2^{m-n}[0,m-n]\mid\eta_1^n[1,n]\cap\eta_2^{m-n}[1,m-n]=\varnothing)\\
        \lesssim&\P(x-\eta_1^n(n)\in\eta_2^{m-n}[0,m-n])\\
        \lesssim&\E[\|x-\eta^n(n)\|^{2-d}].
    \end{split}
\end{equation}
Since $\|\eta^n(n)\|\rightarrow\infty$ in probability by Lemma \ref{lemma:traveldistance}, the last term tends to 0 as $n\rightarrow\infty$, which completes the proof. Now consider the case $m=\infty$. Then for $n\geq n_0$
\begin{equation}
    \begin{split}
        \P(x\in\eta^\infty[n,\infty))=&\lim_{k\rightarrow\infty}\P(x\in\eta^\infty[n,k])\\
        =&\lim_{k\rightarrow\infty}\limm\P(x\in\eta^m[n,k])\\
        \leq&\lim_{k\rightarrow\infty}\limm\P(x\in\eta^m[n,m])<\varepsilon.
    \end{split}
\end{equation}
\end{proof}

Lastly, we give a uniform bound on the probability that the tails of two SAWs intersect.
\begin{lemma}\label{lemma:hittinglongtime2}
Let $\varepsilon>0$. Then there exists $k_0=k_0(\varepsilon)$ such that for all $m,n\geq k\geq k_0$ (including $m,n=\infty$) and independent SAWs $\eta_1^m,\eta_2^n$, we have
\begin{equation}
    \begin{split}
        \P(\{\eta_1^m[1,k]\cap\eta_2^n[1,k]=\varnothing\}
        \setminus\{\eta_1^m[1,m]\cap\eta_2^n[1,n]=\varnothing\})<\varepsilon.
    \end{split}
\end{equation}
\end{lemma}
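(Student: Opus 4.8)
The plan is to reduce the event to the statement ``the tail of one of the walks, started after time $k$, intersects the other walk'', and then to split the location of a hypothetical intersection point according to its distance from the origin. Points far from the origin are controlled uniformly---in particular uniformly in $k$---by summing the two-point function bound of Lemma~\ref{lemma:twopoint} over the complement of a ball, which is the tail of a convergent series because $d\geq5$. Points inside a fixed ball form a finite set, on which Lemma~\ref{lemma:hittinglongtime} applies pointwise.

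First I would note that the event in the statement is contained in $A\cup B$, where
\[
A=\{\eta_1^m[k,m]\cap\eta_2^n[1,n]\neq\varnothing\},\qquad
B=\{\eta_1^m[1,m]\cap\eta_2^n[k,n]\neq\varnothing\};
\]
indeed, if the two walks avoid one another during their first $k$ steps but intersect overall, then any intersection $\eta_1^m(i)=\eta_2^n(j)$ with $i,j\geq1$ must have $i>k$ or $j>k$ (otherwise $\eta_1^m[1,k]\cap\eta_2^n[1,k]\neq\varnothing$), placing the event in $A$ or in $B$. Since $B$ is obtained from $A$ by interchanging the two walks (and $m$ with $n$), and the estimate for $\P(A)$ below will only use $m,n\geq k$ and will cover $m,n=\infty$, it suffices to show $\P(A)<\varepsilon/2$.

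To bound $\P(A)$, fix a threshold $M$ and apply a union bound over the intersection point together with independence of the two walks and the inclusions $\eta_1^m[k,m]\subseteq\eta_1^m[0,m]$, $\eta_2^n[1,n]\subseteq\eta_2^n[0,n]$:
\[
\P(A)\leq\sum_{\|x\|>M}\P\big(x\in\eta_1^m[0,m]\big)\,\P\big(x\in\eta_2^n[0,n]\big)
+\sum_{\|x\|\leq M}\P\big(x\in\eta_1^m[k,m]\big).
\]
By Lemma~\ref{lemma:twopoint} the first sum is at most $C^2\sum_{\|x\|>M}\|x\|^{2(2-d)}$, and since $2(d-2)>d$ for $d\geq5$ this is the tail of a convergent series; hence one may pick $M=M(\varepsilon)$, independent of $m,n,k$, making this term $<\varepsilon/4$. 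The ball $\{\|x\|\leq M\}$ is now fixed, with at most $(2M+1)^d$ lattice points, so Lemma~\ref{lemma:hittinglongtime} provides, for each such $x$, an $n_0(x)=n_0\!\left(x,\tfrac{\varepsilon}{4(2M+1)^d}\right)$ with $\P(x\in\eta_1^m[k,m])<\tfrac{\varepsilon}{4(2M+1)^d}$ for all $m\geq k\geq n_0(x)$ (including $m=\infty$). Taking $k_0=\max_{\|x\|\leq M}n_0(x)$ makes the second sum $<\varepsilon/4$ for all $k\geq k_0$, so $\P(A)<\varepsilon/2$, and therefore $\P(A\cup B)<\varepsilon$.

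The one subtlety---and the reason the argument is organized this way---is that Lemma~\ref{lemma:hittinglongtime} yields a threshold $n_0(x)$ depending on the point $x$, so it cannot be applied to infinitely many points simultaneously; this is exactly why the far-away points must be discarded first by a purely quantitative estimate that is uniform in $k$ (and in $m,n$), leaving only the finitely many points of a fixed ball. Everything else is a routine union bound, and the cases $m=\infty$ or $n=\infty$ require no extra work since Lemmas~\ref{lemma:twopoint} and~\ref{lemma:hittinglongtime} already include them.
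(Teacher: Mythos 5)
Your proof is correct, and it follows the paper up to the same first step: rewriting the event as a subset of $A\cup B$ with $A=\{\eta_1^m[k,m]\cap\eta_2^n[1,n]\neq\varnothing\}$ and $B$ its mirror image, then applying a union bound over the intersection point together with independence. Where you diverge is in how the resulting sum $\sum_{x}\P(x\in\eta_1^m[k,m])\,\P(x\in\eta_2^n[1,n])$ is estimated. The paper bounds it in one shot, using the intermediate bound of Lemma~\ref{lemma:hittinglongtime} and the convolution estimate $\sum_x\|x\|^{2-d}\|x-y\|^{2-d}\lesssim\|y\|^{4-d}$ (valid for $d\geq5$), landing on $\E[\|\eta^k(k)\|^{4-d}]$, which tends to $0$ by Lemma~\ref{lemma:traveldistance}. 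You instead split at a radius $M$: for $\|x\|>M$ you discard the time restriction and use only Lemma~\ref{lemma:twopoint}, getting the tail $C^2\sum_{\|x\|>M}\|x\|^{2(2-d)}$ of a convergent series (convergent precisely because $2(d-2)>d$ when $d\geq5$), uniform in $m,n,k$; for the finitely many $\|x\|\leq M$ you invoke Lemma~\ref{lemma:hittinglongtime} pointwise and take a maximum. Your version buys a more elementary and self-contained estimate---no Newtonian-potential convolution inequality is needed, only summability of $\|x\|^{2(2-d)}$---and it cleanly sidesteps the $x=0$ and $x=\eta^k(k)$ singularities of the potentials, at the cost of a two-parameter limit (first $M$, then $k_0(M)$) rather than a single clean bound. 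Your remark about why the ball decomposition is necessary (Lemma~\ref{lemma:hittinglongtime} gives an $x$-dependent threshold, so it cannot be applied to infinitely many $x$ at once) is exactly the right observation.
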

\begin{proof}
First note that for all $k$,
\begin{equation}\label{eq:lemmahittinglongtime2}
    \begin{split}
        &\P(\{\eta_1^m[1,k]\cap\eta_2^n[1,k]=\varnothing\}
        \setminus\{\eta_1^m[1,m]\cap\eta_2^n[1,n]=\varnothing\})\\
        \leq&\P(\eta_1^m[k,m]\cap\eta_2^n[1,n]\neq\varnothing)+\P(\eta_1^m[1,m]\cap\eta_2^n[k,n]\neq\varnothing).
    \end{split}
\end{equation}
We only bound the first term, the second term being analogous. By Lemmas \ref{lemma:twopoint} and \ref{lemma:hittinglongtime},
\begin{equation}
    \begin{split}
        \P(\eta_1^m[k,m]\cap\eta_2^n[1,n]\neq\varnothing)\leq&\sum_{x\in\Z^d}\P(x\in\eta_2^n[1,n],\,x\in\eta_1^m[k,m])\\
        \lesssim&\sum_{x\in\Z^d}\|x\|^{2-d}\E[\|x-\eta^k(k)\|^{2-d}]\\
        \lesssim&\E\left[\|\eta^k(k)\|^{4-d}\right].
    \end{split}
\end{equation}
By Lemma \ref{lemma:traveldistance}, the last quantity tends to 0 as $k\rightarrow\infty$, which completes the proof.
\end{proof}

\section{Construction of two-sided infinite SAW}\label{sec:twosided}
Recall the construction of two-sided infinite SAW in Definition \ref{def:twosidedsaw}.
In Section \ref{sec:twosidedproofs}, we show that the two-sided infinite SAW is well-defined and prove Theorem \ref{th:maintwosided}. We also prove part (i) of Theorem \ref{th:mainpatterns}. We prove Theorem \ref{th:dmpalt} in Section \ref{sec:dmpaltproof}.

\subsection{Proof of Theorem \ref{th:maintwosided}}\label{sec:twosidedproofs}
Two-sided infinite SAW is well-defined by the following lemma
\begin{lemma}\label{lemma:avoidtwoinf}
Let $\eta^\infty_1,\eta^\infty_2$ be two independent infinite self-avoiding walks starting from 0. Recall the definition of the constant $A$ from Theorem \ref{th:countingwalks}. Then
\begin{equation}
    \P(\eta^\infty_1[1,\infty)\cap\eta^\infty_2[1,\infty)=\varnothing)\geq A^{-1}.
\end{equation}
\end{lemma}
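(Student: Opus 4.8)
The plan is to sandwich the infinite-walk probability between finite-walk quantities and then use the asymptotic $c_n\sim A\mu^n$ from Theorem~\ref{th:countingwalks}(i). First I would record the classical finite identity
\[
    \P(\eta_1^n[1,n]\cap\eta_2^n[1,n]=\varnothing)=\frac{c_{2n}}{c_n^2}\qquad\text{for all }n,
\]
which comes from cutting a self-avoiding path of length $2n$ at its midpoint: the map sending $\omega\in\SAW_{2n}$ to the ordered pair $\big(\,(\omega(n)-\omega(n-i))_{i=0}^n,\ (\omega(n+i)-\omega(n))_{i=0}^n\,\big)$ is a bijection onto the set of ordered pairs $(\omega_1,\omega_2)\in\SAW_n\times\SAW_n$ with $\omega_1[1,n]\cap\omega_2[1,n]=\varnothing$, with inverse given by gluing the two paths at $0$. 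Dividing by $c_n^2$, the total number of ordered pairs, gives the identity, and since $c_n\sim A\mu^n$ we get $c_{2n}/c_n^2\to A\mu^{2n}/(A\mu^n)^2=A^{-1}$ as $n\to\infty$.

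Next I would fix $k$ and observe that for $n\ge k$ the inclusion $\{\eta_1^n[1,n]\cap\eta_2^n[1,n]=\varnothing\}\subseteq\{\eta_1^n[1,k]\cap\eta_2^n[1,k]=\varnothing\}$ gives $\P(\eta_1^n[1,k]\cap\eta_2^n[1,k]=\varnothing)\ge c_{2n}/c_n^2$. The event on the left is a cylinder event depending only on $\eta_1^n[0,k]$ and $\eta_2^n[0,k]$; since $\eta_1^\infty$ and $\eta_2^\infty$ are independent and each $\eta_i^n[0,k]$ converges in distribution to $\eta_i^\infty[0,k]$ by Theorem~\ref{th:countingwalks}(iii), letting $n\to\infty$ yields $\P(\eta_1^\infty[1,k]\cap\eta_2^\infty[1,k]=\varnothing)=\lim_n\P(\eta_1^n[1,k]\cap\eta_2^n[1,k]=\varnothing)\ge A^{-1}$. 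Finally, the events $\{\eta_1^\infty[1,k]\cap\eta_2^\infty[1,k]=\varnothing\}$ decrease, as $k\to\infty$, to $\{\eta_1^\infty[1,\infty)\cap\eta_2^\infty[1,\infty)=\varnothing\}$, so continuity of measure from above gives $\P(\eta_1^\infty[1,\infty)\cap\eta_2^\infty[1,\infty)=\varnothing)\ge A^{-1}$, as claimed.

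The computation itself is short; the step I expect to require the most care is the passage to the infinite walk, namely that the probability of the cylinder event under the pair of infinite walks equals the $n\to\infty$ limit of its probability under the pair of finite walks. This needs that the event is measurable with respect to finitely many coordinates of each walk and that the joint law of $(\eta_1^\infty[0,k],\eta_2^\infty[0,k])$ is the product of the one-walk marginals, which are themselves the limits of the finite marginals — all of which follow from independence together with Theorem~\ref{th:countingwalks}(iii). One should also spell out carefully that the midpoint map in the first step is genuinely a bijection (well-definedness, and that the gluing map inverts it), which is routine. I do not expect to need Lemma~\ref{lemma:hittinglongtime2} for the stated bound, though combining it with the argument above (applied at finite $n$ and then at $m=n=\infty$) would upgrade the inequality to the equality $\P(\eta_1^\infty[1,\infty)\cap\eta_2^\infty[1,\infty)=\varnothing)=A^{-1}$.
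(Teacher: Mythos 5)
Your proof is correct and follows essentially the same route as the paper's: the bijection (cutting a $2n$-step SAW at its midpoint) giving $\P(\eta_1^n[1,n]\cap\eta_2^n[1,n]=\varnothing)=c_{2n}/c_n^2$, the inclusion of events to drop to a cylinder event, weak convergence of finite to infinite SAW on cylinder events, and continuity of measure from above in $k$. The paper just chains these steps in the opposite logical direction (starting from the infinite-walk probability and unwinding it to finite quantities), and your closing remark about using Lemma~\ref{lemma:hittinglongtime2} to upgrade the inequality to an equality is also correct.
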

\begin{proof}
Two SAWs of length $m$ starting from 0 that do not intersect form a self-avoiding path of length $2m$. So the probability that two SAWs of length $m$ starting from 0 do not intersect is $\frac{c_{2m}}{c_m^2}$. Thus, by Theorem \ref{th:countingwalks}, 
%and Lemma \ref{lemma:hittinglongtime2},
\begin{equation}
    \begin{split}
        \P(\eta^\infty_1[1,\infty)\cap\eta^\infty_2[1,\infty)=\varnothing)=&\lim_{k\rightarrow\infty}\P(\eta^\infty_1[1,k]\cap\eta^\infty_2[1,k]=\varnothing)\\
        =&\lim_{k\rightarrow\infty}\lim_{m\rightarrow\infty}\P(\eta^m_1[1,k]\cap\eta^m_2[1,k]=\varnothing)\\
        \geq&\lim_{m\rightarrow\infty}\P(\eta^m_1[1,m]\cap\eta^m_2[1,m]=\varnothing)\\
        =&\limm\frac{c_{2m}}{c_m^2}=A^{-1},
    \end{split}
\end{equation}
which concludes the proof.
\end{proof}

We start by proving the first part of Theorem \ref{th:maintwosided}.
\begin{proposition}\label{prop:twosidedconvergence}
The random variable $\eta^{m,n}$ converges weakly to $\heta$ uniformly in $m$ and $n$, i.e., for all $\zeta$ of length $l+k$ and $\varepsilon>0$, there exist $M=M(\varepsilon,l,k),N=N(\varepsilon,l,k)$ such that for all $m\geq M$, $n\geq N$,
\begin{equation}
    |\P(\eta^{m,n}[-l,k]=\zeta)-\P(\heta[-l,k]=\zeta)|<\varepsilon.
\end{equation}
\end{proposition}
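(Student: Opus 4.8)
The plan is to reduce the statement to a comparison of conditional probabilities and then feed in the uniform hitting estimates of Section~\ref{sec:hitting}. We may assume $\zeta$ is a genuine two-sided self-avoiding path with $\zeta(0)=0$, since otherwise both probabilities vanish. Write $\zeta_1\in\SAW_l$ and $\zeta_2\in\SAW_k$ for the one-sided pieces of $\zeta$, so $\zeta_1(j)=\zeta(-j)$ and $\zeta_2(j)=\zeta(j)$, and let $\eta_1^m,\eta_2^n$ be independent finite SAWs and $\esing_1,\esing_2$ independent infinite SAWs from $0$. Setting
\[
B_{m,n}=\{\eta_1^m[0,l]=\zeta_1\}\cap\{\eta_2^n[0,k]=\zeta_2\},\qquad A_{m,n}=\{\eta_1^m[1,m]\cap\eta_2^n[1,n]=\varnothing\},
\]
the definition of $\eta^{m,n}$ gives $\P(\eta^{m,n}[-l,k]=\zeta)=\P(B_{m,n}\mid A_{m,n})$, and with $\bar B=\{\esing_1[0,l]=\zeta_1\}\cap\{\esing_2[0,k]=\zeta_2\}$ and $\bar A=\{\esing_1[1,\infty)\cap\esing_2[1,\infty)=\varnothing\}$, Definition~\ref{def:twosidedsaw} gives $\P(\heta[-l,k]=\zeta)=\P(\bar B\mid\bar A)$.

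For $j\le\min(m,n)$ introduce the truncations $A_{m,n}^{(j)}=\{\eta_1^m[1,j]\cap\eta_2^n[1,j]=\varnothing\}\supseteq A_{m,n}$ and $\bar A^{(j)}=\{\esing_1[1,j]\cap\esing_2[1,j]=\varnothing\}\supseteq\bar A$. First I would record that the relevant normalisations are bounded below: $\P(A_{m,n})=c_{m+n}/(c_mc_n)\to A^{-1}>0$ by Theorem~\ref{th:countingwalks}, so $\P(A_{m,n})$, and hence also $\P(A_{m,n}^{(j)})$, exceeds $\tfrac12 A^{-1}$ once $m,n$ are large, while $\P(\bar A)\ge A^{-1}>0$ by Lemma~\ref{lemma:avoidtwoinf}. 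Using $|\P(A_{m,n}^{(j)})-\P(A_{m,n})|=\P(A_{m,n}^{(j)}\setminus A_{m,n})$ and $\P(B_{m,n}\cap A_{m,n}^{(j)})-\P(B_{m,n}\cap A_{m,n})\le\P(A_{m,n}^{(j)}\setminus A_{m,n})$, a routine rearrangement of the quotients $\P(\cdot\cap\cdot)/\P(\cdot)$ yields a constant $C_A$ depending only on $A$ with
\[
\bigl|\P(B_{m,n}\mid A_{m,n})-\P(B_{m,n}\mid A_{m,n}^{(j)})\bigr|\le C_A\,\P(A_{m,n}^{(j)}\setminus A_{m,n})
\]
for all large $m,n$. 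By Lemma~\ref{lemma:hittinglongtime2} the right-hand side is $<\varepsilon'$ as soon as $j\ge k_0(\varepsilon')$, and crucially this holds uniformly over $m,n\ge j$. The same rearrangement, now using continuity of the measure from above along the decreasing sequence $\bar A^{(j)}\downarrow\bar A$ together with $\P(\bar A)>0$, shows $\P(\bar B\mid\bar A^{(j)})\to\P(\bar B\mid\bar A)$ as $j\to\infty$.

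It remains to treat fixed $j$. Both $B_{m,n}\cap A_{m,n}^{(j)}$ and $A_{m,n}^{(j)}$ depend on only finitely many coordinates of the two independent walks $\eta_1^m,\eta_2^n$; expanding each as a finite sum over admissible pairs of initial segments and applying Theorem~\ref{th:countingwalks}(iii) to each factor shows $\P(B_{m,n}\cap A_{m,n}^{(j)})\to\P(\bar B\cap\bar A^{(j)})$ and $\P(A_{m,n}^{(j)})\to\P(\bar A^{(j)})$ as $m,n\to\infty$. Since $\P(\bar A^{(j)})\ge\P(\bar A)>0$, it follows that $\P(B_{m,n}\mid A_{m,n}^{(j)})\to\P(\bar B\mid\bar A^{(j)})$. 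Finally I would assemble the bound by the triangle inequality along the chain of conditional probabilities $\P(B_{m,n}\mid A_{m,n})$, $\P(B_{m,n}\mid A_{m,n}^{(j)})$, $\P(\bar B\mid\bar A^{(j)})$, $\P(\bar B\mid\bar A)$: first pick $j$ so large that the first and last gaps are each below $\varepsilon/3$ (this is where the $m,n$-uniformity is needed, and it is supplied by Lemma~\ref{lemma:hittinglongtime2}), then with $j$ fixed pick $M,N\ge j$ so large that the middle gap is below $\varepsilon/3$ for all $m\ge M$, $n\ge N$. I expect the only delicate point to be bookkeeping the uniformity in $m$ and $n$ while manipulating the quotients; all the probabilistic substance is already in hand, namely the uniform hitting bound Lemma~\ref{lemma:hittinglongtime2} and the lower bounds from Theorem~\ref{th:countingwalks} and Lemma~\ref{lemma:avoidtwoinf}.
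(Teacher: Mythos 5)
Your proposal is correct and follows essentially the same route as the paper's own proof: truncate the mutual-avoidance event to the first $j$ (or $t$) coordinates, control the truncation error uniformly in $m,n$ via Lemma~\ref{lemma:hittinglongtime2} together with the lower bound $\P(A_{m,n})=c_{m+n}/(c_mc_n)\to A^{-1}$, pass to the limit using weak convergence of $(\eta^m,\eta^n)$ to $(\esing_1,\esing_2)$ on the $j$-coordinate event, and untruncate on the infinite side. The only cosmetic difference is that for the infinite untruncation you invoke continuity from above, whereas the paper reuses Lemma~\ref{lemma:hittinglongtime2} (which is stated to include $m,n=\infty$); both are valid.
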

\begin{proof}
Let $\varepsilon>0$ and $t\in\N$. By weak convergence of finite SAW to one-sided infinite SAW, there exist $M=M(\varepsilon,k)$ and $N=N(\varepsilon,k)$ such that for all $m\geq M$ and $N\geq n$ and all events $A$ depending on $t$ coordinates, we have
\begin{equation}
    \begin{split}
        |\P(\eta^m,\eta^n\in A)-\P(\eta_1^\infty,\eta_2^\infty\in A)|<\varepsilon/3.
    \end{split}
\end{equation}
This is due to weak convergence of finite SAW to infinite SAW. Let $\zeta_+:=\zeta[-l,0]$ and $\zeta_-:=\zeta[0,k]$. Then for all $t$ large enough and $m\geq M(t,\varepsilon)$ and $n\geq N(t,\varepsilon)$, we have
\begin{equation}
    \begin{split}
        &|\P(\eta^{m,n}[-k,l]=\zeta)-\P(\heta[-l,l]=\zeta)|\\
        =&|\P(\eta^m[0,k]=\zeta_-,\,\eta^n[0,l]=\zeta_+\mid\eta^m[1,m]\cap\eta^n[1,n]=\varnothing)-\P(\heta[-k,l]=\zeta)|\\
        <&|\P(\eta^m[0,k]=\zeta_-,\,\eta^n[0,l]=\zeta_+\mid\eta^m[1,t]\cap\eta^n[1,t]=\varnothing)-\P(\heta[-k,l]=\zeta)|+\frac{\varepsilon}{3}\\
        <&|\P(\eta_1^\infty[0,l]=\zeta_-,\,\eta_2^\infty[0,k]=\zeta_+\mid\eta_1^\infty[1,t]\cap\eta_2^\infty[1,t]=\varnothing)-\P(\heta[-k,l]=\zeta)|+\frac{2\varepsilon}{3}\\
        <&|\P(\eta_1^\infty[0,l]=\zeta_-,\,\eta_2^\infty[0,k]=\zeta_+\mid\eta_1^\infty[1,\infty)\cap\eta_2^\infty[1,\infty)=\varnothing)-\P(\heta[-k,l]=\zeta)|+\varepsilon\\
        =&\varepsilon.
    \end{split}
\end{equation}
For the first inequality, we use Lemma \ref{lemma:hittinglongtime2} combined with the fact that $\P(\eta^m[1,t]\cap\eta^n[1,t]=\varnothing)\geq\frac{c_{m+n}}{c_mc_n}$ is bounded from below. The last inequality follows in the same way. The last equality follows from the definition of $\heta$.
\end{proof}

We also prove the second part of Theorem \ref{th:maintwosided}.
\begin{proposition}
For all $k$ and $\zeta\in\SAW_k$,
    \begin{equation}
        \limm\P(T^m\eta^\infty[0,k]=\zeta)=\P(\heta[0,k]=\zeta).
    \end{equation}
\end{proposition}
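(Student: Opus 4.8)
The plan is to show that shifting the one‑sided infinite SAW by $m$ steps produces, in the limit, the two‑sided object from Definition~\ref{def:twosidedsaw}. The key structural input is the domain Markov property applied at time $m$: conditionally on $\esing[0,m]=\omega$, the future $T^m\esing$ is an infinite SAW from $0$ conditioned to escape $\omega$, i.e.\ conditioned not to hit $\omega-\omega(m)$, while the past (the reversal of $\omega$) is a finite self‑avoiding path of length $m$. So $T^m\esing$ looks like a pair (reversed past $\cev\omega$, future $\esing_2$) with $\esing_2$ an independent infinite SAW conditioned to avoid $\cev\omega[1,m]$. To match $\heta$, which is a pair $(\esing_1,\esing_2)$ of independent infinite SAWs conditioned on mutual avoidance of their positive‑time parts, I want to replace the finite reversed past by an infinite one and show the conditioning is asymptotically equivalent.

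First I would fix $\zeta\in\SAW_k$, which I decompose as a short ``past'' $\zeta_-$ and ``future'' $\zeta_+$ around the index $0$; the event $\{(T^m\esing)[0,k]=\zeta\}$ then asks that the last few steps of $\esing[0,m]$ (reversed) agree with $\zeta_-$ and the first few steps of $T^m\esing$ agree with $\zeta_+$. Using Lemma~\ref{lemma:dmpfin} (or rather its consequence for the joint law of $(\esing[0,m],T^m\esing)$, obtained as in the proof of Proposition~\ref{prop:twosidedconvergence}), I would rewrite $\P((T^m\esing)[0,k]=\zeta)$ as a probability involving the reversed finite walk $\eta^{m}$ and an independent infinite walk $\esing_2$, conditioned so that $\eta^m[1,m]$ does not meet $\esing_2[1,\infty)$. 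Then I would pass to the limit $m\to\infty$ in two stages exactly as in Proposition~\ref{prop:twosidedconvergence}: truncate the avoidance constraint to a window $[1,t]$ at a cost controlled by Lemma~\ref{lemma:hittinglongtime2} (whose error is uniform in $m$ and in the lengths involved, using that the relevant non‑intersection probabilities are bounded below by $c_{m+n}/(c_m c_n)\gtrsim A^{-1}$), then use weak convergence of finite SAW to infinite SAW (Theorem~\ref{th:countingwalks}(iii)) to replace the finite reversed walk $\eta^m$ by $\esing_1$, and finally restore the full avoidance constraint $\esing_1[1,\infty)\cap\esing_2[1,\infty)=\varnothing$ at another cost controlled by Lemma~\ref{lemma:hittinglongtime2}. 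The resulting expression is precisely $\P(\heta[0,k]=\zeta)$ by Definition~\ref{def:twosidedsaw}.

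One technical point deserves care: when I apply the domain Markov property at time $m$ and reverse the past, I am using that $\esing[0,m]$ is a finite self‑avoiding path whose reversal is again self‑avoiding, and that the future of $\esing$ given $\esing[0,m]=\omega$ is an infinite SAW conditioned to escape $\omega$ — this is the infinite‑length DMP (Theorem~\ref{th:dmp} in the paper). Since the statement to be proved is only a one‑window convergence $\{(T^m\esing)[0,k]=\zeta\}$, I do not need uniformity in anything here; I just need the error terms from Lemma~\ref{lemma:hittinglongtime2} to vanish as $t\to\infty$ uniformly in $m$, which is exactly what that lemma provides (the bound depends only on $k_0=k_0(\varepsilon)$). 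The main obstacle is bookkeeping the reversal and the two‑stage interchange of limits cleanly: one must order the three approximations (truncate avoidance window; replace finite by infinite reversed walk via weak convergence; un‑truncate avoidance window) so that each step's error is covered either by Lemma~\ref{lemma:hittinglongtime2} with a uniform‑in‑$m$ bound or by Theorem~\ref{th:countingwalks}(iii) for a fixed finite number of coordinates. Once this is set up the computation is essentially identical to the chain of inequalities in the proof of Proposition~\ref{prop:twosidedconvergence}.
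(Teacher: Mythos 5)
Your plan is correct but takes a genuinely different route from the paper. The paper's proof is a three-line triangle inequality: since $T^m\eta^{m+n}\overset{d}{=}\eta^{m,n}$, one first uses the already-proved Proposition~\ref{prop:twosidedconvergence} to get $\eta^{m,n}[0,k]$ close to $\heta[0,k]$ uniformly in $m,n$, and then for fixed $m$ one sends $n\to\infty$ and uses the \emph{defining} weak convergence $\eta^{m+n}\to\eta^\infty$ on the fixed window $[m,m+k]$ to replace $\eta^{m,n}[0,k]\overset{d}{=}\eta^{m+n}[m,m+k]-\eta^{m+n}(m)$ by $T^m\eta^\infty[0,k]$. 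No DMP and no hitting estimates are invoked here; all the work was already done in Proposition~\ref{prop:twosidedconvergence}. Your plan instead attacks $T^m\eta^\infty$ directly via the infinite domain Markov property: summing the identity $\P(\esing[0,m]=\omega,\,(T^m\esing)[0,k]=\zeta)=\mu^{-m}\P(\esing_2\esc\omega,\,\esing_2[0,k]=\zeta)$ over $\omega\in\SAW_m$ gives $\P((T^m\esing)[0,k]=\zeta)=\P(\esing_2[0,k]=\zeta\mid\esing_2\esc\eta^m)$ with $\eta^m,\esing_2$ independent, and then you rerun the truncation/weak-convergence/untruncation argument from the proof of Proposition~\ref{prop:twosidedconvergence} to replace the finite $\eta^m$ by an infinite $\esing_1$. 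This works, and it is arguably more conceptual: it makes visible exactly why the past forgets its finiteness, whereas the paper's proof hides this inside Proposition~\ref{prop:twosidedconvergence}. Two cosmetic points: the decomposition $\zeta=\zeta_-\oplus\zeta_+$ is vacuous for this particular statement, since $(T^m\esing)[0,k]=\zeta$ only constrains the forward part, so $\zeta_-$ is a single point; and the DMP you use is the infinite one (Theorem~\ref{th:dmp}, derived from Theorem~\ref{th:dmpalt}), which the paper proves only in later sections --- there is no circularity since the one-sided part of Theorem~\ref{th:dmpalt} relies only on the hitting estimates and weak convergence, but you should note that you are pulling it forward in the logical order.
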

\begin{proof}
Let $\varepsilon>0$. Then by Proposition \ref{prop:twosidedconvergence}, there exists $M=M(k,\varepsilon)$ such that for all $m,n\geq M$, we have $|\P(\heta[0,k]=\zeta)-\P(\eta^{m,n}[0,k]=\zeta)|<\varepsilon$. Then by definition of the infinite one-sided SAW, there exists $N=N(m+k,\varepsilon)\geq k$ such that for all $n\geq N$,
\begin{equation}
    \begin{split}
    	&|\P(\heta[0,k]=\zeta)-\P(T^m\eta^\infty[0,k]=\zeta)|\\
    	<&
        |\P(\eta^{m,n}[0,k]=\zeta)-\P(T^m\eta^\infty[0,k]=\zeta)|+\varepsilon\\
        =&|\P(\eta^{m+n}[m,m+k]-\eta^{m+n}(m)=\zeta)-\P(T^m\eta^\infty[0,k]=\zeta)|+\varepsilon\\
        <&|\P(\eta^\infty[m,m+k]-\esing(m)=\zeta)-\P(T^m\eta^\infty[0,k]=\zeta)|+2\varepsilon\\
        =&2\varepsilon.
    \end{split}
\end{equation}
Since $\varepsilon>0$ was arbitrary, the proof is complete.
\end{proof}

We are now ready to prove part (i) of Theorem \ref{th:mainpatterns}.
\begin{proof}[Proof of Theorem \ref{th:mainpatterns}.(i)]
By the uniform convergence of two-sided SAW in Proposition \ref{prop:twosidedconvergence} and the domain Markov property in Lemma \ref{lemma:dmpfin},
\begin{equation}
    \begin{split}
        \limn\E[\pi_\zeta(n,\eta^n)]=&\limn\frac{1}{n}\sum_{i=0}^{n}\P(\eta^n[i,i+k]=\zeta)\\
        =&\limn\frac{1}{n}\sum_{i=0}^{n}\P(\eta^{n-i}[0,k]=\zeta\mid\eta^{n-i}[1,n-i]\cap\eta^i[0,i]=\varnothing)\\
        =&\limn\frac{1}{n}\sum_{i=0}^{n}\P(\eta^{n-i,i}[0,k]=\zeta)\\
        =&\P(\heta[0,k]=\zeta),
    \end{split}
\end{equation}
which was what we wanted to show.
\end{proof}

\subsection{Proof of Theorem \ref{th:dmpalt}}\label{sec:dmpaltproof}
\begin{proof}[Proof of Theorem \ref{th:dmpalt}]
We first prove the one-sided part. Note that it suffices to prove that for all~$l$ and all $\zeta'\in\SAW_l$,
\begin{equation}
	\P(\esing[0,k]=\zeta,\,T^k\esing[0,l]=\zeta')=\mu^{-k}\P(\esing\esc\zeta,\,\esing[0,l]=\zeta').
\end{equation}
Let $n>k+l$. Recall that $c_n(\zeta)$ denotes the number of $n$-step self-avoiding paths which start with $\zeta$. Denote by~$\zeta\oplus\zeta'$ the concatenation of $\zeta$ and $\zeta'$, i.e., $\zeta\oplus\zeta':=(\zeta(0),\ldots,\zeta(k),\zeta'(1)+\zeta(k),\ldots,\zeta'(l)+\zeta(k))$. Note that~$c_n(\zeta\oplus\zeta')$ equals the number of $n-k$-step paths that escape $\zeta$ and start with $\zeta'$. Thus,
\begin{equation}
    \begin{split}
        \P(\eta^n[0,k]=\zeta,\,(T^k\eta^n)[0,l]=\zeta')=&\frac{c_n(\zeta\oplus\zeta')}{c_n}=\frac{c_{n-k}}{c_n}\cdot\frac{c_n(\zeta\oplus\zeta')}{c_{n-k}}\\
        =&\frac{c_{n-k}}{c_n}\P(\eta^{n-k}[1,n-k]\cap\zeta=\varnothing,\,\eta^{n-k}[0,l]=\zeta'\mid\eta^{n-k}(0)=\zeta(k)).
    \end{split}
\end{equation}
Taking the limit $n\rightarrow\infty$, we obtain that the left-hand side converges to $\P(\esing[0,k]=\zeta,T^k\esing[0,l]=\zeta')$, and $\frac{c_{n-k}}{c_n}$ converges to $\mu^{-k}$. So it remains to show that
\begin{equation}
	\begin{split}
		&\limn\P(\eta^{n-k}[1,n-k]\cap\zeta=\varnothing,\,\eta^{n-k}[0,l]=\zeta'\mid\eta^{n-k}(0)=\zeta(k))\\
		=&\P(\esing[1,\infty)\cap\zeta=\varnothing,\,\esing[0,l]=\zeta'\mid\esing(0)=\zeta(k)).
	\end{split}
\end{equation}
By a union bound and Lemma \ref{lemma:hittinglongtime}, there exists $m_0=m_0(k,\varepsilon)$ such that for all $m\geq m_0$, and all $n-k\geq m$ (including $n-k=\infty$),
\begin{equation}
    \P(\eta^{n-k}[m,n-k]\cap\zeta\neq\varnothing\mid\eta^{n-k}(0)=\zeta(k))<\varepsilon.
\end{equation}
So for $n-k$ sufficiently large and $m\geq m_0$,
\begin{equation}
    \begin{split}
        &|\P(\eta^{n-k}[1,n-k]\cap\zeta=\varnothing,\,\eta^{n-k}[0,l]=\zeta'\mid\eta^{n-k}(0)=\zeta(k))\\
        &-\P(\esing[1,\infty)\cap\zeta=\varnothing,\,\eta^{\infty}[0,l]=\zeta'\mid\esing(0)=\zeta(k))|\\
        \leq&|\P(\eta^{n-k}[1,m]\cap\zeta=\varnothing,\,\eta^{n-k}[0,l]=\zeta'\mid\eta^{n-k}(0)=\zeta(k))\\
        &-\P(\esing[1,m]\cap\zeta=\varnothing,\,\eta^{\infty}[0,l]=\zeta'\mid\esing(0)=\zeta(k))|+\varepsilon\\
        \leq&2\varepsilon.
    \end{split}
\end{equation}
The last step follows from weak convergence of finite SAW to infinite SAW. This completes the proof since $\varepsilon$ was arbitrary.

The proof for two-sided SAW is similar. We have
\begin{equation}
    \begin{split}
        &\P(\eta^{m,n}[-k,k]=\xi)\\
        =&\frac{c_{m-k}c_{n-k}}{c_mc_n}\cdot\frac{\P(\eta_1^{m-k}\cup\zeta\cup\eta_2^{n-k}\text{ is a self-avoiding path}\mid\eta_1^{m-k}(0)=\xi(-k),\,\eta_2^{n-k}(0)=\xi(k))}{\P(\eta_1^{m}\cup\eta_2^{n}\text{ is a self-avoiding path}\mid\eta_1^{m}(0)=0=\eta_2^{n}(0))}.
    \end{split}
\end{equation}
Let $\varepsilon>0$. By Lemmas \ref{lemma:hittinglongtime} and \ref{lemma:hittinglongtime2}, there exists $l_0$ such that for all $l\geq m_0$ and $m-k,n-k\geq l$, we have
\begin{equation}
    \begin{split}
        \P(\eta^{m-k}_1[l,m-k],\eta^{n-k}_2[l,m-k],\zeta\text{ are not disjoint}\mid\eta_1^{m-k}(0)=\xi(-k),\,\eta_2^{n-k}(0)=\xi(k))<\varepsilon
    \end{split}
\end{equation}
The proof now follows in the same way as for the one-sided infinite SAW, noting that by Theorem \ref{th:maintwosided}, the left-hand side converges to $\P(\heta[-k,k]=\zeta)$ and that $\P(\eta_1^{m}\cup\eta_2^{n}\text{ is a self-avoiding path}\mid\eta_1^{m}(0)=0=\eta_2^{n}(0))\rightarrow A^{-1}$.
\end{proof}

\section{Ergodicity}\label{sec:coupling}
In this section we prove Theorem \ref{th:ergodicity}, from which Theorem \ref{th:mainpatterns}.(iii) follows. Recall the definition of the shift operator $T$ from Section \ref{sec:intro}. Ergodicity of one-sided infinite SAW with respect to $T$ follows from the following proposition. 

\begin{proposition}\label{prop:ergodicity}
Let $k\in\N$ and let $\esing_1$ and $\esing_2$ be infinite self-avoiding walks conditioned to start with paths $\zeta_1,\zeta_2\in\SAW_k$ respectively. Let $\|\cdot\|_{\TV}$ be the total variation distance. Then
\begin{equation}
	\limm\|T^m\esing_1-T^m\esing_2\|_{\TV}=0.
\end{equation}
\end{proposition}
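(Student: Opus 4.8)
The plan is to construct an explicit coupling of $T^m\esing_1$ and $T^m\esing_2$ that succeeds with probability tending to $1$, following the scheme sketched in Section \ref{sec:sketch}. The key tool is the domain Markov property for infinite SAW (Theorem \ref{th:dmp}, referenced in the sketch): conditional on $\esing[0,j]=\omega$, the remainder $T^j\esing$ is an infinite SAW started from $\omega(j)$ conditioned to escape $\omega$. Concretely, to sample such a conditioned infinite SAW one repeatedly samples an independent unconditioned $\esing$ until it escapes $\omega$; since two infinite SAWs fail to intersect with probability at least $A^{-1}>0$ (Lemma \ref{lemma:avoidtwoinf}), and more generally escaping a fixed finite path has probability bounded below, this rejection sampling terminates and the success probabilities are uniformly positive.

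First I would fix a rapidly increasing sequence $(a_\l)_{\l\ge 0}$ with $a_0=k$ (to be chosen later, with $a_\l$ much larger than $a_{\l-1}$) and build the two walks in blocks $[a_{\l-1},a_\l]$. At stage $\l$, having already built $\esing_1[0,a_{\l-1}]$ and $\esing_2[0,a_{\l-1}]$, I sample a single independent infinite SAW $\esing_{0,\l}$ and attempt to use it for the next block of \emph{both} walks: if $\esing_{0,\l}[0,a_\l-a_{\l-1}]$ escapes $\esing_1[0,a_{\l-1}]$, I set $\esing_1[a_{\l-1},a_\l]$ accordingly, and likewise for $\esing_2$; if it escapes one but not the other, I use it for that one and sample the other's block from the correct conditional law via independent rejection sampling; if it escapes neither, I resample $\esing_{0,\l}$ and repeat the stage. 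The domain Markov property guarantees that, marginally, each $\esing_i$ produced this way has the correct law (it is an infinite SAW conditioned to start with $\zeta_i$). I call stage $\l$ \emph{successful} if $\esing_{0,\l}$ escapes both current initial segments, so that $(T^{a_{\l-1}}\esing_1)[0,a_\l-a_{\l-1}]=(T^{a_{\l-1}}\esing_2)[0,a_\l-a_{\l-1}]$; if every stage from some $\l^\star$ on is successful, then $T^{a_{\l^\star-1}}\esing_1=T^{a_{\l^\star-1}}\esing_2$, giving the coupling.

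Next I would estimate the failure probabilities. For the very first stages the bound on $\P(\esing_{0,\l}\text{ escapes both})$ is just uniformly bounded below by a constant (using that escaping a finite path, and also avoiding the far-away tail of another SAW, each costs only a bounded factor). The crucial point is the conditional estimate: \emph{given} that stage $\l-1$ was successful — so that the two initial segments $\esing_1[0,a_{\l-1}]$ and $\esing_2[0,a_{\l-1}]$ agree except on the short piece $[0,a_{\l-2}]$ and are at distance roughly that of $\esing(a_{\l-1})$ from the origin — stage $\l$ fails only if $\esing_{0,\l}$, starting from $\esing_i(a_{\l-1})$, intersects one of the discrepancy sets $\esing_i[0,a_{\l-2}]$ but not the other. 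By Lemmas \ref{lemma:hittinglongtime} and \ref{lemma:hittinglongtime2} (the uniform hitting estimates), this probability is at most something like $\E[\|\esing(a_{\l-1})\|^{4-d}]$ plus a term controlled by $a_{\l-2}$, which can be made summable in $\l$ by choosing $a_\l$ growing fast enough. Hence $\sum_\l \P(\text{stage }\l\text{ fails}\mid\text{stage }\l-1\text{ succeeds})<\infty$, and a Borel--Cantelli argument (combined with the uniformly positive chance of an initial successful stage) shows that almost surely all stages past some point are successful. Therefore $\P(T^m\esing_1\neq T^m\esing_2)\to 0$, which bounds the total variation distance and proves the proposition.

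The main obstacle I expect is the bookkeeping around the conditional failure estimate: one must carefully track \emph{which} part of the already-built path is a genuine discrepancy between the two copies after a successful stage (only the old piece $[0,a_{\l-2}]$, since the blocks built in stages $\l-1$ agree), verify that resampling within a stage does not disturb the marginal laws, and make sure the hitting estimates of Section \ref{sec:hitting} apply with the right base point (the tip $\esing_i(a_{\l-1})$, whose displacement is large by Lemma \ref{lemma:traveldistance}) and uniformly over the conditioning. Getting the sequence $(a_\l)$ and the resulting summability to line up cleanly is the technical heart; everything else is a fairly routine assembly of the domain Markov property and the uniform hitting bounds.
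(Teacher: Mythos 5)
Your proposal matches the paper's approach closely: the same coupling construction (shared sample $\esing_{0,\l}$ per block, resampled on double failure), the same success/failure dichotomy, the same use of the hitting estimates and Borel--Cantelli, culminating in exponentially decaying failure probabilities. However, there is one place where you treat as routine something the paper has to work for, and it is not a mere bookkeeping issue.

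You assert that $\P(\esing_{0,\l}\text{ escapes both } \esing_1[0,a_{\l-1}],\,\esing_2[0,a_{\l-1}])$ ``is just uniformly bounded below by a constant'' because escaping a finite path and avoiding a far-away tail ``cost only a bounded factor.'' This does not follow from Lemma \ref{lemma:avoidtwoinf}, which handles two \emph{unconditioned} independent infinite SAWs. After a failure at stage $\l-1$, the paths $\esing_1[0,a_{\l-1}]$, $\esing_2[0,a_{\l-1}]$ are \emph{not} distributed like unconditioned SAWs: $\esing_1$ (say) is conditioned to start with $\zeta_1$ and, on top of that, the failure event forces a nontrivial conditioning that depends on the \emph{infinite} future of the auxiliary walk ($\esing_{0,\l-1}$ does not escape $\zeta_2$, which is not a cylinder event). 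Two nontrivial inputs are needed to get the uniform lower bound. First, one must show that a SAW conditioned to start with a fixed pattern $\xi$ is still escaped by an independent SAW with probability at least $A^{-1}(1-\varepsilon)$ once enough steps have elapsed — this is Lemma \ref{lemma:avoidingconditioned}, proved by applying Jensen's inequality to the identity $\P(\esing[0,k]=\zeta)=\mu^{-k}\P(\esing\esc\zeta)$ from Theorem \ref{th:dmpalt}, not by a simple ``bounded factor'' heuristic. Second, one must truncate the failure conditioning to a finite window $\tilde a_\l$ (so that the conditioning becomes a cylinder event with probability close to its full-space analogue), which is the $\tilde a_\l$ step inside Lemma \ref{lemma:probcouple}; only after this truncation can Lemma \ref{lemma:avoidingconditioned} be applied, and the resulting bound is $A^{-1}(1-\varepsilon)-\tfrac12$, which is positive only because $A<2$ for $d\ge 5$ (a fact from Hara--Slade). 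Without these two ingredients, the ``uniformly bounded below'' claim is unsupported and the recovery mechanism of the coupling is not established. The rest of your sketch (the conditional failure estimate given success, and the induction feeding into Borel--Cantelli via exponential decay) is sound and parallels Lemmas \ref{lemma:probuncouple} and Proposition \ref{prop:coupling}.
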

For the proof, we construct a coupling of two infinite SAWs conditioned to start with two different patterns such that the two walks agree after a certain time with high probability. As a consequence of Proposition \ref{prop:ergodicity}, we obtain that parts of the self-avoiding walk that are far away are asymptotically independent.
\begin{corollary}\label{cor:ergodicity}
Let $\zeta_1,\zeta_2\in\SAW_k$. Then
\begin{equation}
    \lim_{m\rightarrow\infty}\P(\eta^\infty[0,k]=\zeta_1,\,\eta^\infty[m,m+k]=\zeta_2)=\P(\eta[0,k]=\zeta_1)\P(\heta[0,k]=\zeta_2).
\end{equation}
\end{corollary}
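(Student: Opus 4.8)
\textbf{Proof proposal for Corollary \ref{cor:ergodicity}.}
The plan is to deduce the corollary directly from Proposition \ref{prop:ergodicity} via the domain Markov property. First I would condition on the initial segment: by the (one-sided) domain Markov property, which follows from Lemma \ref{lemma:dmpfin} by passing to the limit $n\to\infty$, conditional on $\esing[0,k]=\zeta_1$ the shifted walk $T^k\esing$ is distributed as an infinite SAW started from $\zeta_1(k)$ conditioned to escape $\zeta_1$; equivalently, writing $\esing^{\zeta_1}$ for an infinite SAW conditioned to start with $\zeta_1$, we have
\begin{equation}
    \P(\esing[0,k]=\zeta_1,\,\esing[m,m+k]=\zeta_2)=\P(\esing[0,k]=\zeta_1)\,\P\big((T^{m-k}\,\esing^{\zeta_1})[0,k]=\zeta_2-(\esing^{\zeta_1})(m-k)\big),
\end{equation}
up to the translation bookkeeping built into the definition of $T$. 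So it suffices to show that $\P((T^{m-k}\esing^{\zeta_1})[0,k]=\zeta_2)$ converges, as $m\to\infty$, to $\P(\heta[0,k]=\zeta_2)$, and that this limit does not depend on the conditioning path $\zeta_1$.

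The second point is exactly what Proposition \ref{prop:ergodicity} gives: comparing $\esing^{\zeta_1}$ with a walk $\esing^{\zeta_1'}$ conditioned on an arbitrary other length-$k$ starting path $\zeta_1'$, the total variation distance between $T^m\esing^{\zeta_1}$ and $T^m\esing^{\zeta_1'}$ tends to $0$, so the (subsequential) limits of $\P((T^m\esing^{\zeta_1})[0,k]=\zeta_2)$ are the same for every starting pattern. Averaging over $\zeta_1'$ weighted by $\P(\esing[0,k]=\zeta_1')$ and using that $\sum_{\zeta_1'}\P(\esing[0,k]=\zeta_1')\,\P((T^m\esing^{\zeta_1'})[0,k]=\zeta_2)=\P(\esing[m,m+k]=\zeta_2+\esing(m))$, which is precisely $\P((T^m\esing^\infty)[0,k]=\zeta_2)$, I can identify the common limit with $\limm\P((T^m\esing)[0,k]=\zeta_2)$. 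By the second half of Theorem \ref{th:maintwosided}, this equals $\P(\heta[0,k]=\zeta_2)$.

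Putting the pieces together: for fixed $\zeta_1$, write $a_m:=\P((T^m\esing^{\zeta_1})[0,k]=\zeta_2)$ and $b_m:=\P((T^m\esing)[0,k]=\zeta_2)$. Since $b_m$ is a convex combination of $\{a_m^{\zeta_1'}\}_{\zeta_1'}$ and $\|a_m^{\zeta_1'}-a_m^{\zeta_1''}\|$-type differences vanish by Proposition \ref{prop:ergodicity} (applied with total variation bounding the difference of these cylinder probabilities), we get $|a_m-b_m|\to0$; combined with $b_m\to\P(\heta[0,k]=\zeta_2)$ this gives $a_m\to\P(\heta[0,k]=\zeta_2)$. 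Multiplying by the constant $\P(\esing[0,k]=\zeta_1)$ and re-inserting the translation bookkeeping yields the claim.

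\textbf{Main obstacle.} The routine but slightly delicate step is the translation bookkeeping: $\esing[m,m+k]=\zeta_2$ is \emph{not} the same cylinder event as $(T^m\esing)[0,k]=\zeta_2$ because of the recentering $\omega\mapsto\omega-\omega(m)$, so one must either phrase everything in terms of $(T^m\esing)[0,k]$ from the start (summing over all translates of $\zeta_2$) or carry the shift $\esing(m)$ through the domain Markov identity carefully. The other point requiring a line of justification is that the one-sided domain Markov property for $\esing$ — used to pass from $\esing$ conditioned on $\esing[0,k]=\zeta_1$ to $\esing^{\zeta_1}$ — holds in the infinite-length limit; this is the ``$m=\infty$'' analogue of Lemma \ref{lemma:dmpfin} and is presumably established as Theorem \ref{th:dmp} referenced in the sketch, so I would simply cite it. Neither step is conceptually hard; the real content is entirely in Proposition \ref{prop:ergodicity}.
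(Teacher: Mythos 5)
Your proposal is correct and follows essentially the same route as the paper: use Proposition \ref{prop:ergodicity} to show that $\P(\eta^\infty[m,m+k]=\zeta_2\mid\eta^\infty[0,k]=\zeta_1')$ is asymptotically independent of $\zeta_1'$, then average over $\zeta_1'$ and identify the common limit via Theorem \ref{th:maintwosided} as $\P(\heta[0,k]=\zeta_2)$. The detour through the domain Markov property and the $\esing^{\zeta_1}$ notation is a harmless rephrasing of what the paper does directly with conditional probabilities.
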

\begin{proof}
Let $\zeta_1'\in\SAW_k$. Then by Proposition \ref{prop:ergodicity},
\begin{equation}
    \begin{split}
        \limm|\P(\eta^\infty[m,m+k]=\zeta_2\mid\eta^\infty[0,k]=\zeta_1)-\P(\eta^\infty[m,m+k]=\zeta_2\mid\eta^\infty[0,k]=\zeta'_1)|=0.
    \end{split}
\end{equation}
So by Theorem \ref{th:maintwosided},
\begin{equation}
    \begin{split}
        \P(\heta[0,k]=\zeta_2)=&\limm\P(\eta^\infty[m,m+k]=\zeta_2)\\
    =&\sum_{\zeta_1'\in\SAW_k}\limm\P(\eta^\infty[m,m+k]=\zeta_2\mid\eta^\infty[0,k]=\zeta'_1)\P(\eta^\infty[0,k]=\zeta'_1)\\
    =&\sum_{\zeta_1'\in\SAW_k}\limm\P(\eta^\infty[m,m+k]=\zeta_2\mid\eta^\infty[0,k]=\zeta_1)\P(\eta^\infty[0,k]=\zeta'_1)\\
    =&\limm\P(\eta^\infty[m,m+k]=\zeta_2\mid\eta^\infty[0,k]=\zeta_1)\\
    =&\limm\P(\eta^\infty[m,m+k]=\zeta_2,\eta^\infty[0,k]=\zeta_1)\P(\eta^\infty[0,k]=\zeta_1)^{-1},
    \end{split}
\end{equation}
which is what we want to show.
\end{proof}

\begin{proof}[Proof of Theorem \ref{th:ergodicity}]
Ergodicity of $\esing$ follows directly from Corollary \ref{cor:ergodicity} by standard results in ergodic theory (see \cite[Lemma 6.7.4]{gray09probability}).
\end{proof}

\begin{proof}[Proof of Theorem \ref{th:mainpatterns}.(iii)]
It follows from weak convergence of $T^m\eta^\infty$ to $\heta$ that $\eta^\infty$ is asymptotic mean stationary with limit $\heta$. It then follows from Corollary \ref{cor:ergodicity} that $\heta$ is ergodic with respect to the sigma-algebra generated by cylinder sets. A standard ergodic theorem (see \cite[Corollary 7.2.1]{gray09probability}) immediately implies the desired strong law of large numbers.
\end{proof}

In Section \ref{sec:constructioncoupling}, we construct the coupling and prove that it is indeed a coupling using the domain Markov property. In Section \ref{sec:ergodicityproof}, we prove Proposition \ref{prop:ergodicity} using the coupling.

\subsection{Coupling}\label{sec:constructioncoupling}
Let $\omega_1\in\SAW_k$ and let $\omega_2$ be a possibly infinite self-avoiding path. We denote by $\omega_1\oplus\omega_2$ the concatenation of $\omega_1$ and $\omega_2$, i.e., $\omega_1\oplus\omega_2:=(\omega_1(0),\ldots,\omega_1(k),\omega_2(1)+\omega_1(k),\omega_2(2)+\omega_1(k),\ldots)$. Recall that we say $\omega_2$ escapes $\omega_1$ if $\omega_1\oplus\omega_2$ forms a self-avoiding path, i.e., if $\omega_2[1,\infty)\cap(\omega_1[0,k]-\omega_1(k))=\varnothing$.

%Denote $\teta_i[a,b]:=\eta_i[a,b]-\eta_i(b)$ to be the path $\eta_i[a,b]$ shifted such that the tip lies at the origin.

Recall the definition of the left-shift operator $T$ from Section \ref{sec:intro}. Let $\esing_1$ and $\esing_2$ be as in Proposition \ref{prop:ergodicity}. We construct a coupling such that $T^m\esing_1=T^m\esing_2$ with probability tending to~1 as $m\rightarrow\infty$. Let $(a_\l)_{\l\geq0}$ be an increasing sequence of positive integers with $a_0=k$. The other values of $a_\l$ will be specified later. At each iteration $\l$, given~$\esing_1[0,a_\l]$ and~$\esing_2[0,a_\l]$, we construct a coupling of $\esing_1[a_\l,a_{\l+1}]$ and $\esing_2[a_\l,a_{\l+1}]$ as follows.

At each iteration $\l$, we sample an independent infinite self-avoiding walk $\esing_{0,\l}$ starting from 0. If this walk escapes both $\esing_1[0,a_\l]$ and $\esing_2[0,a_\l]$, then set $\esing_1[0,a_{\l+1}]:=\esing_1[0,a_{\l}]\oplus\esing_{0,\l}[0,a_{\l+1}-a_\l]$ and $\esing_2[0,a_{\l+1}]:=\esing_2[0,a_\l]\oplus\esing_{0,\l}[0,a_{\l+1}-a_\l]$. If this walk escapes $\esing_1[0,a_\l]$ but does not escape~$\esing_2[0,a_\l]$, then set~$\esing_1[0,a_{\l+1}]:=\esing_1[0,a_{\l}]\oplus\esing_{0,\l}[0,a_{\l+1}-a_\l]$ and sample $\esing_2[a_\l,a_{\l+1}]$ independently. Similarly, if the walk escapes $\esing_2[0,a_\l]$ but does not escape $\esing_1[0,a_\l]$, then set~$\esing_2[0,a_{\l+1}]:=\esing_2[0,a_{\l}]\oplus\esing_{0,\l}[0,a_{\l+1}-a_\l]$ and sample $\esing_1[a_\l,a_{\l+1}]$ independently. If the walk does not escape $\esing_1[0,a_\l]$ and does not escape $\eta_2[0,a_\l]$, resample $\esing_{0,\l}$ and continue until it escapes one of the two paths. We refer to Figure \ref{fig:coupling} for an illustration of the first iteration of the coupling.

\begin{figure}
    \centering
    \begin{subfigure}{\textwidth}
        \begin{subfigure}{.5\textwidth}
            \centering
            \includestandalone{TikZsuccessbefore}
%            \caption{Before success}
        \end{subfigure}
        \begin{subfigure}{.5\textwidth}
            \centering
            \includestandalone{TikZsuccessafter}
%            \caption{After success}
        \end{subfigure}
        \caption{A successful coupling in the first iteration. The walk $\esing_{0,1}$ escapes both $\esing_1[0,k]$ and $\esing_2[0,k]$. Thus, $(T^{k}\esing_1)[0,a_1-k]=(T^{k}\esing_2)[0,a_1-k]=\esing_{0,1}[0,a_1-k]$.}
    \end{subfigure}
    \begin{subfigure}{\textwidth}
        \begin{subfigure}{.5\textwidth}
            \centering
            \includestandalone{TikZfailbefore}
        \end{subfigure}
        \begin{subfigure}{.5\textwidth}
            \centering
            \includestandalone{TikZfailafter}
        \end{subfigure}
        \caption{An unsuccessful coupling in the first iteration. The walk $\esing_{0,1}$ escapes $\esing_1[0,k]$, but not $\esing_2[0,k]$. Thus, $(T^{k}\esing_1)[0,a_1-k]=\esing_{0,1}[0,a_1-k]$ and $\esing_2[k,a_1]$ is sampled independently.}
    \end{subfigure}
    \caption{A sketch of the first iteration of the coupling. The solid blue and red lines represent $\esing_1$ and $\esing_2$ respectively. The dashed green line represents $\esing_{0,1}$. The top pictures are an example of a successful coupling, in the bottom pictures the coupling is not successful.}
    \label{fig:coupling}
\end{figure}

The proof that this is in fact a coupling relies on the following \emph{domain Markov property} of infinite SAW. The property says that a SAW conditioned to start with a certain pattern $\zeta$ is distributed as the concatenation of $\zeta$ and a SAW conditioned to escape $\zeta$. For finite SAWs, this property is immediate and was already proved in \ref{lemma:dmpfin}. The statement for infinite SAWs follows from Theorem~\ref{th:dmpalt}.
%\com{[It seems logical to call this a domain Markov property. The property is immediate for finite SAWs and it might already have a name, but I couldn't find it in the literature. It's similar to the restriction property that is important for SLE, but it's not quite the same.]}

\begin{theorem}[Infinite domain Markov property]\label{th:dmp}
For every $k\in\N$, $\zeta\in\SAW_k$ and every event $E$ (in the sigma-algebra on $\SAW_\infty$ generated by cylinder events),
\begin{equation}
    \begin{split}
        \P(\eta^\infty[k,\infty)\in E\mid\esing[0,k]=\zeta)= \P(\eta^\infty[0,\infty)\in E\mid\eta^\infty[1,\infty)\cap\zeta=\varnothing,\,\eta^{\infty}(0)=\zeta(k))
    \end{split}
\end{equation}
\end{theorem}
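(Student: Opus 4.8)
The plan is to deduce the infinite-length domain Markov property directly from the alternative expression for the law of $\esing$ in Theorem \ref{th:dmpalt}. Recall that Theorem \ref{th:dmpalt} states
\begin{equation*}
	\P(\esing[0,k]=\zeta,\,(T^k\esing)[0,\infty)\in E)=\mu^{-k}\,\P(\esing\esc\zeta,\,\esing[0,\infty)\in E)
\end{equation*}
for every event $E$. The strategy is: (i) rewrite the right-hand side in terms of the conditional law of $\esing$ given that it escapes $\zeta$ and starts at $\zeta(k)$; (ii) specialize $E=\SAW_\infty$ to compute the normalizing constant $\P(\esing[0,k]=\zeta)$; (iii) divide.

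First I would rewrite $\P(\esing\esc\zeta,\,\esing[0,\infty)\in E)$. The event $\{\esing\esc\zeta\}$ means the concatenation $\zeta\oplus\esing$ is self-avoiding, i.e. $\esing[1,\infty)\cap(\zeta-\zeta(k))=\varnothing$. After translating so that $\esing(0)=\zeta(k)$ — which is just a deterministic shift and doesn't affect the probability of any translation-invariant escape event, but I should be careful to state things for the version of $\esing$ started at $\zeta(k)$ — this is exactly the event $\{\eta^\infty[1,\infty)\cap\zeta=\varnothing,\ \eta^\infty(0)=\zeta(k)\}$ appearing in the statement. So
\begin{equation*}
	\P(\esing\esc\zeta,\,\esing[0,\infty)\in E)=\P\big(\eta^\infty[1,\infty)\cap\zeta=\varnothing,\ \eta^\infty(0)=\zeta(k)\big)\cdot\P\big(\eta^\infty[0,\infty)\in E\mid\eta^\infty[1,\infty)\cap\zeta=\varnothing,\ \eta^\infty(0)=\zeta(k)\big),
\end{equation*}
and similarly, taking $E=\SAW_\infty$ in Theorem \ref{th:dmpalt} gives the simpler identity \eqref{eq:dmpaltsing},
\begin{equation*}
	\P(\esing[0,k]=\zeta)=\mu^{-k}\,\P\big(\eta^\infty[1,\infty)\cap\zeta=\varnothing,\ \eta^\infty(0)=\zeta(k)\big).
\end{equation*}
Next I would observe that by definition of conditional probability,
\begin{equation*}
	\P(\eta^\infty[k,\infty)\in E\mid\esing[0,k]=\zeta)=\frac{\P(\esing[0,k]=\zeta,\,(T^k\esing)[0,\infty)\in E)}{\P(\esing[0,k]=\zeta)},
\end{equation*}
where I use that $\{\eta^\infty[k,\infty)\in E\}$ and $\{(T^k\esing)[0,\infty)\in E\}$ describe the same event once we agree that $E$ is a translation-invariant description of a path pattern (this is the one bookkeeping point that needs a clean statement: $E$ is an event in the cylinder sigma-algebra, and "$\eta^\infty[k,\infty)\in E$" means the shifted-and-recentered tail path lies in $E$, which is precisely $(T^k\esing)[0,\infty)\in E$ up to the translation by $\esing(k)$ that is implicit in both sides of Theorem \ref{th:dmpalt}). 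Substituting the two displays above, the factors $\mu^{-k}$ cancel and the factor $\P(\eta^\infty[1,\infty)\cap\zeta=\varnothing,\ \eta^\infty(0)=\zeta(k))$ cancels, leaving exactly
\begin{equation*}
	\P(\eta^\infty[k,\infty)\in E\mid\esing[0,k]=\zeta)=\P\big(\eta^\infty[0,\infty)\in E\mid\eta^\infty[1,\infty)\cap\zeta=\varnothing,\ \eta^\infty(0)=\zeta(k)\big),
\end{equation*}
which is the claim.

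The only real obstacle is not analytic but notational: making precise the identification between "the tail of $\esing$ after time $k$, recentered at the origin" and "an infinite SAW started at $\zeta(k)$", and checking that the escape event in Theorem \ref{th:dmpalt} (stated via $\esing\esc\zeta$, i.e. $(\esing+\zeta(k))\cap\zeta=\{\zeta(k)\}$) matches the conditioning event $\{\eta^\infty[1,\infty)\cap\zeta=\varnothing,\ \eta^\infty(0)=\zeta(k)\}$ used in the theorem statement. Both are just restatements of "$\zeta$ followed by the walk is self-avoiding", so this is routine once the conventions (where the origin sits, and that $\esing(0)$ may be prescribed) are fixed; I would handle it with a sentence or two rather than a computation. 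Everything else is a one-line division, so there is no serious difficulty beyond invoking Theorem \ref{th:dmpalt}.
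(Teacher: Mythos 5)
Your proposal is correct and follows essentially the same route as the paper: both invoke Theorem \ref{th:dmpalt}, identify the normalizing constant $\P(\esing[0,k]=\zeta)=\mu^{-k}\P(\esing\esc\zeta)$ by setting $E=\SAW_\infty$, and then divide so that the $\mu^{-k}$ and the escape-probability factors cancel. The extra sentence you spend on the notational bookkeeping (identifying $\{\eta^\infty[k,\infty)\in E\}$ with $\{(T^k\esing)[0,\infty)\in E\}$ and matching the escape event to the conditioning event) is implicit in the paper's chain of equalities, so there is no substantive difference.
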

\begin{proof}
We have
\begin{equation}
	\begin{split}
		&\P(\esing[k,\infty)\in E\mid\esing[0,k]=\zeta)\\
		=&\P(\esing[k,\infty)\in E\,\,\esing[0,k]=\zeta)\P(\esing[0,k]=\zeta)^{-1}\\
		=&\mu^{-k}\P(\esing[0,\infty)\in E,\,\esing[1,\infty\cap\zeta=\varnothing\mid\esing(0)=\zeta(k))\P(\esing[0,k]=\zeta)^{-1}\\
		=&\P(\esing[0,\infty)\in E\mid\esing[1,\infty)\cap\zeta=\varnothing,\esing(0)=\zeta(k))\mu^{-k}\P(\esing\esc\zeta)\P(\esing[0,k]=\zeta)^{-1}\\
		=&\P(\esing[0,\infty)\in E\mid\esing[1,\infty)\cap\zeta=\varnothing,\esing(0)=\zeta(k)),
	\end{split}
\end{equation}
which is what we wanted to show.
\end{proof}

We are now ready to prove that the construction above is a coupling. The proof is by induction. The base case is clearly true, since $\esing_1[0,a_0]=\esing_1[0,k]=\zeta_1$ and $\esing_2[0,a_0]=\esing_2[0,k]=\zeta_2$. Now assume that for some $\l\geq0$, $\esing_1[0,a_\l]$ and $\esing_2[0,a_\l]$ are distributed as the first $a_\l$ steps of an infinite SAW conditioned to start with $\zeta_1$ and $\zeta_2$ respectively. From the construction, it is clear that~$\esing_1[a_\l,a_{\l+1}]$ is distributed as the first $a_{\l+1}-a_{\l}$ steps of an infinite self-avoiding walk conditioned to escape $\esing_1[0,a_\l]$. It follows from the domain Markov property Theorem \ref{th:dmp} that~$\esing_1[0,a_{\l+1}]$ is indeed distributed as the first $a_{\l+1}$ steps of an infinite SAW conditioned to start with~$\esing_1[0,a_\l]$. By the induction hypothesis, $\esing_1[0,a_\l]$ is distributed as the first $a_\l$ steps of an infinite SAW conditioned to start with $\zeta_1$. So $\esing_1[0,a_{\l+1}]$ is distributed as the first $a_{\l+1}$ steps of an infinite SAW conditioned to start with $\zeta_1$. The proof for $\esing_2$ is analogous.

\begin{comment}
However, this is unfortunately not quite enough to conclude that finite SAW does indeed converge to $\P$. We also need the following result, which says that SAW is truly transient in that its tail escapes to infinity.

\begin{conjecture}\label{con:transience}
For all $x\in\Z^d$ and $\varepsilon>0$, there exists $M=M(\varepsilon,x)$ such that for all $n\geq m\geq M$,
\begin{equation}
    \P(x\in\eta^n[m,n])<\varepsilon.
\end{equation}
\end{conjecture}

\begin{theorem}\label{th:convconditional}
If Conjectures \ref{con:infalt} and \ref{con:transience} are true, then finite SAW has a weak limit.
\end{theorem}
\end{comment}

\subsection{Proof of ergodicity}\label{sec:ergodicityproof}
We now prove that with probability 1, there exists $\l$ such that $T^{a_\l}\esing_1=T^{a_\l}\esing_2$ for a suitably chosen sequence $(a_\l)_{\l\geq0}$, which implies Proposition \ref{prop:ergodicity}. We say $\esing_1$ and $\esing_2$ are successfully coupled at iteration $\l$ if $T^{a_{\l-1}}\esing_1[0,a_\l-a_{\l-1}]=T^{a_{\l-1}}\esing_2[0,a_\l-a_{\l-1}]$. We prove that if the coupling is not successful at some iteration, then the probability that the coupling is succesful at the next iteration is bounded away from 0. Furthermore, we show that once $\esing_1$ and $\esing_2$ are successfully coupled, the probability that the coupling is unsuccessful at the next iteration is exponentially small. This implies that the probability of the two walks not being successfully coupled at iteration $\l$ decays exponentially in $\l$. So by Borel-Cantelli, a.s. there are only finitely many iterations where $\esing_1$ and~$\esing_2$ are not successfully coupled.

\subsubsection{Probability of coupling successfully}
Recall from Lemma \ref{lemma:avoidtwoinf} for $d\geq5$, the probability that two infinite SAWs do not intersect is positive. It follows that the escape probability of an infinite SAW and the first steps of another infinite SAW is positive. At each iteration $\l$, the probability that the coupling is successful equals the probability that an independent infinite SAW escapes the first $a_{\l-1}$ steps of both $\esing_1$ and $\esing_2$. However, on the event that the coupling was unsuccessful at iteration $\l-1$, $\esing_1$ and $\esing_2$ are not distributed as independent standard SAWs, but rather distributed as SAWs conditioned to start with a certain pattern and to intersect each other. So we need to do some work to show that the probability of successfully coupling at each iteration is bounded from below.

We first prove the following key lemma on the probability of an infinite self-avoiding walk $\eta_1'$ escaping the first $m$ steps of an infinite self-avoiding walk $\eta_2'$ conditioned to start with some path of length $k$. The lemma says that if $m$ is sufficiently large, then this probability is at least the probability that two unconditioned self-avoiding walks do not intersect. So in a sense, the walk $\eta_2'$ forgets its initial conditioning after a large number of steps, which is what it means for the walk to be ergodic.

%This lemma is the only place in the proof of ergodicity where the definition of SAW is really crucial. The definition of the coupling only uses the domain Markov property and the rest of the proof only uses the hitting estimates for Section \ref{sec:hitting}. So in order to prove ergodicity of the IIC for critical percolation, the most difficult part would be proving an analogue of this lemma.
\begin{lemma}\label{lemma:avoidingconditioned}
Let $\eta'_1,\eta'_2$ be independent infinite self-avoiding walks. For all $\xi\in\SAW_k$,
\begin{equation}
    \liminf_{m\rightarrow\infty}\P(\eta'_1\esc\eta_2'[0,m]\mid\eta'_2[0,k]=\xi)\geq A^{-1},
\end{equation}
with $A$ as in Theorem \ref{th:countingwalks}. 
%\com{[For $m$ very large, this probability should tend to the non-intersection probability of a one-sided infinite SAW and a two-sided infinite SAW both started from 0. This should be strictly larger than the probability of two one-sided infinite SAWs not intersecting, which equals $A^{-1}$. So the bound above is not sharp, but it is good enough for our purposes.]}
\end{lemma}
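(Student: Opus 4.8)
The plan is to decompose the event $\{\eta_1' \esc \eta_2'[0,m]\}$ according to the position $\eta_2'(m)$ and to use the domain Markov property to re-express the conditioning on $\eta_2'[0,k]=\xi$ as a conditioning on escaping $\xi$, which becomes asymptotically irrelevant as $m\to\infty$. More precisely, first I would write, using independence of $\eta_1'$ and $\eta_2'$,
\begin{equation}
    \P(\eta_1'\esc\eta_2'[0,m]\mid\eta_2'[0,k]=\xi)=\sum_{\omega}\P(\eta_2'[0,m]=\omega\mid\eta_2'[0,k]=\xi)\,\P(\eta_1'\esc\omega),
\end{equation}
where the sum is over $m$-step self-avoiding paths $\omega$ starting with $\xi$. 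The key point is that $\P(\eta_1'\esc\omega)$ depends on $\omega$ only through $\omega-\omega(m)$ near its tail, and the conditioning on $\{\eta_2'[0,k]=\xi\}$ affects only the first $k$ steps. So morally $\P(\eta_1'\esc\eta_2'[0,m])$ and $\P(\eta_1'\esc\eta_2'[0,m]\mid\eta_2'[0,k]=\xi)$ differ only through whether $\eta_1'$ hits the first $k$ steps of $\eta_2'$, an event that becomes negligible as the tail of $\eta_2'$ travels far away.

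To make this rigorous I would split the escape event into a ``near'' part and a ``far'' part: $\eta_1'$ escapes $\eta_2'[0,m]$ iff $\eta_1'[1,\infty)$ avoids both $\eta_2'[0,k]-\eta_2'(m)$ and $\eta_2'[k,m]-\eta_2'(m)$. The contribution of the near part is controlled by Lemma \ref{lemma:hittinglongtime}: since $\eta_2'[0,k]=\xi$ occupies a fixed bounded set and $\|\eta_2'(m)\|\to\infty$ in probability (Lemma \ref{lemma:traveldistance}), we have $\P(\eta_1'[1,\infty)\cap(\xi-\eta_2'(m))\neq\varnothing\mid\eta_2'[0,k]=\xi)\to 0$ as $m\to\infty$ — here one conditions on $\eta_2'(m)=y$, uses the bound $\lesssim\sum_{x\in\xi}\|x-y\|^{2-d}$ together with a union bound over the finitely many $x\in\xi$, and then averages over $y$. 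Hence up to an error $\varepsilon$,
\begin{equation}
    \P(\eta_1'\esc\eta_2'[0,m]\mid\eta_2'[0,k]=\xi)\geq \P(\eta_1'[1,\infty)\cap(\eta_2'[k,m]-\eta_2'(m))=\varnothing\mid\eta_2'[0,k]=\xi)-\varepsilon.
\end{equation}
Now I would apply the finite domain Markov property (Lemma \ref{lemma:dmpfin}), or rather its infinite analogue Theorem \ref{th:dmp}, to say that conditional on $\eta_2'[0,k]=\xi$, the path $\eta_2'[k,\infty)-\xi(k)$ is an infinite SAW conditioned to escape $\xi$; dropping the conditioning to escape $\xi$ can only change the probability of the displayed event — actually one must be a little careful here, so the cleaner route is to use Theorem \ref{th:dmpalt}: $\P(\eta_2'[0,k]=\xi)=\mu^{-k}\P(\esing\esc\xi)$, which lets one rewrite the conditional probability as an expectation over an \emph{unconditioned} $\eta_2'$ reweighted by $\1_{\{\eta_2'\esc\xi\}}/\P(\esing\esc\xi)$. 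Combined with the independence of $\eta_1'$, one reduces to estimating $\P(\eta_1'[1,\infty)\cap(\eta_2'[k,m]-\eta_2'(m))=\varnothing)$ for independent \emph{unconditioned} walks, and showing that this is at least $A^{-1}-o(1)$.

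For that last step I would compare with Lemma \ref{lemma:avoidtwoinf}: two independent unconditioned infinite SAWs avoid each other with probability $\geq A^{-1}$, and $\{\eta_1'[1,\infty)\cap(\eta_2'[k,m]-\eta_2'(m))=\varnothing\}$ is a weaker constraint than $\{\eta_1'[1,\infty)\cap(\eta_2'[0,m]-\eta_2'(m))=\varnothing\}$ (which is itself distributed via a shift as the non-intersection of the tail of one walk with the reversal of another) — the gap between the two events is again a ``near the origin of $\eta_2'$'' hitting event handled by Lemma \ref{lemma:hittinglongtime2}. Assembling the pieces, for every $\varepsilon>0$ there is $m_0$ so that for $m\geq m_0$ the conditional escape probability exceeds $A^{-1}-\varepsilon$, which gives the claimed $\liminf\geq A^{-1}$. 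The main obstacle I anticipate is the bookkeeping in the reduction from the conditioned walk $\eta_2'$ to an unconditioned one: one must track that the reweighting by $\1_{\{\eta_2'\esc\xi\}}$ interacts only with the first $k$ steps and therefore, after the tail has escaped to distance $\to\infty$, decouples from the relevant non-intersection event up to a vanishing error — this is exactly the ``forgetting the initial conditioning'' phenomenon the lemma is asserting, so proving it cleanly is the crux.
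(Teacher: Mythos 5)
Your plan reduces, as you yourself note, to passing from a walk conditioned on $\{\eta_2'[0,k]=\xi\}$ (equivalently, by the domain Markov property, conditioned on $\{\esing\esc\xi\}$) to an unconditioned walk with a vanishing error — and this is a genuine gap, not a bookkeeping issue. The naive bound one gets is of the form
\begin{equation*}
  \P\big(\eta_1'\esc\eta_2'[k,m]\mid\eta_2'\esc\xi\big)\;\geq\;\frac{\P\big(\eta_1'\esc\eta_2'[k,m]\big)-\P(\eta_2'\nesc\xi)}{\P(\eta_2'\esc\xi)},
\end{equation*}
which is useless whenever $\P(\esing\esc\xi)$ is small (which it is for a tightly wound $\xi$): the right-hand side can even be negative, while the conditional escape probability is correspondingly small. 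What you actually need is that the conditioning on $\{\esing\esc\xi\}$ decorrelates from the far-away non-intersection event. But that is precisely the "forgetting the initial segment" phenomenon that this lemma is used to establish in the coupling argument of Section~\ref{sec:coupling} (it feeds Lemma~\ref{lemma:probcouple}, hence Proposition~\ref{prop:coupling}, hence ergodicity), so invoking such a decorrelation here would be circular, and I do not see how to prove it directly from the hitting estimates alone. Also beware that the near/far split is safe only in the direction you need it: escaping $\eta_2'[0,m]$ implies escaping $\eta_2'[k,m]$, so the hitting estimates for $\eta_2'[0,k]-\eta_2'(m)$ give a lower bound on the former in terms of the latter — that part is fine — but it still leaves you with the conditioned far event.

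The paper's proof is shorter and goes in a different direction from your first display, which is actually the right starting point. Writing
\begin{equation*}
  \P\big(\eta_1'\esc\eta_2'[0,m]\mid\eta_2'[0,k]=\xi\big)=\sum_{\zeta\succeq\xi}\P(\esing\esc\zeta)\,\frac{\P(\esing[0,m]=\zeta)}{\P(\esing[0,k]=\xi)},
\end{equation*}
the key step is to use~\eqref{eq:dmpaltsing} in the form $\P(\esing\esc\zeta)=\mu^{m}\P(\esing[0,m]=\zeta)$, which turns each summand into a multiple of $\P(\esing[0,m]=\zeta)^2$. Jensen (Cauchy--Schwarz) over the $c_m(\xi)$ terms gives $\sum_{\zeta\succeq\xi}\P(\esing[0,m]=\zeta)^2\geq\P(\esing[0,k]=\xi)^2/c_m(\xi)$, and the whole expression collapses to $\mu^m\P(\esing[0,k]=\xi)/\big(c_m\,\P(\eta^m[0,k]=\xi)\big)\to A^{-1}$ using $c_m\sim A\mu^m$ and weak convergence $\P(\eta^m[0,k]=\xi)\to\P(\esing[0,k]=\xi)$. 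There is no near/far split, no removal of conditioning, and no decorrelation step at all. The observation you are missing is that the two factors in your first decomposition — the escape probability of $\eta_1'$ and the conditional law of $\eta_2'[0,m]$ — are, via Theorem~\ref{th:dmpalt}, the same quantity up to explicit constants, so that a sum of squares appears and Jensen does the rest.
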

\begin{proof}
Write $\zeta\succeq\xi$ if $\zeta$ is a self-avoiding path of length $m\geq k$ that starts with $\xi$. Recall the notation~$c_m(\xi)$, which is the number of $m$-step self-avoiding paths that start with $\xi$. Then, by~\eqref{eq:dmpaltsing}, we have
\begin{equation}
    \begin{split}
        &\liminf_{m\rightarrow\infty}\P(\eta'_1\esc\eta_2'[0,m]\mid\eta'_2[0,k]=\xi)\\
        =&\liminf_{m\rightarrow\infty}\sum_{\zeta\succeq\xi}\P(\esing\esc\zeta)\P(\esing[0,m]=\zeta)\P(\esing[0,k]=\xi)^{-1}\\
        =&\liminf_{m\rightarrow\infty}\mu^m\P(\esing[0,k]=\xi)^{-1}\sum_{\zeta\succeq\xi}\P(\esing[0,m]=\zeta)^{2}\\
        \geq&\liminf_{m\rightarrow\infty}\mu^m\P(\esing[0,k]=\xi)^{-1}c_m(\xi)\left(\sum_{\zeta\succeq\xi}\P(\esing[0,m]=\zeta)c_m(\xi)^{-1}\right)^2\\
        =&\liminf_{m\rightarrow\infty}\mu^m\P(\esing[0,k]=\xi)^{-1}c_m(\xi)\P(\esing[0,k]=\xi)^2c_m(\xi)^{-2}\\
        =&\liminf_{m\rightarrow\infty}\frac{\mu^m\P(\esing[0,k]=\xi)}{c_m\P(\eta^m[0,k]=\xi)}
        =A^{-1}
    \end{split}
\end{equation}
The inequality follows from Jensen. The last inequality follows from weak convergence of finite SAW to infinite SAW and the fact that $c_n\sim A\mu^n$.
\end{proof}

We now show that the probability of successfully coupling at iteration $\l+1$ given that the walks are not successfully coupled at iteration $\l$ is bounded from below uniformly in $\l$ if $a_\l$ grows fast enough. For the proof, we show that with high probability, if the coupling was unsuccessful at iteration $\l$, then $\esing_1$ and $\esing_2$ intersect each other in the first few steps after $a_{\l-1}$. Then on this event, by Lemma \ref{lemma:avoidingconditioned}, we can choose $a_{\l}$ to be large enough that $\esing_{0,\l+1}$ escapes both $\esing_1[0,a_\l]$ and~$\esing_2[0,a_\l]$ with positive probability.

\begin{lemma}\label{lemma:probcouple}
Let $\esing_1$ and  $\esing_2$ be infinite SAWs conditioned to start with $\zeta_1$ and $\zeta_2$ respectively, coupled as above. Then there exists $c>0$ and a sequence $(a_\l)_{\l\geq0}$ such that for all $\l\geq2$,
\begin{equation}
    \P(\text{coupling is successful at iteration $\l+1$}\mid \text{coupling is not successful at iteration $\l$})>c.
\end{equation}
\end{lemma}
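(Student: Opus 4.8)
The plan is to show that conditional on the coupling failing at iteration $\l$, with high probability the two walks $\esing_1$ and $\esing_2$ agree except on a \emph{bounded} number of steps around time $a_{\l-1}$, and then to invoke Lemma~\ref{lemma:avoidingconditioned} to conclude that a fresh SAW $\esing_{0,\l+1}$ escapes both of the (essentially identical, except near $a_{\l-1}$) paths $\esing_1[0,a_\l]$ and $\esing_2[0,a_\l]$ with probability bounded below by something close to $A^{-1}>0$. First I would fix the structure: failure at iteration $\l$ means that the walk $\esing_{0,\l}$ sampled at that iteration escaped exactly one of $\esing_1[0,a_{\l-1}]$, $\esing_2[0,a_{\l-1}]$ (say it escaped $\esing_1$ but not $\esing_2$), so that $(T^{a_{\l-1}}\esing_1)[0,a_\l - a_{\l-1}] = \esing_{0,\l}[0,a_\l-a_{\l-1}]$ while $\esing_2[a_{\l-1},a_\l]$ was resampled independently as an infinite SAW conditioned to escape $\esing_2[0,a_{\l-1}]$ and truncated. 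Importantly, at the \emph{start} of iteration $\l$ both $\esing_1[0,a_{\l-2}]$ and $\esing_2[0,a_{\l-2}]$ were equal (since the coupling \emph{was} successful at iteration $\l-1$ under the conditioning $\{$not successful at $\l\}$ — actually one must be careful: I would phrase the lemma so that the relevant conditioning is "successful at $\l-1$ but not at $\l$", or handle the case "not successful at $\l-1$" by a union over the last successful iteration; the cleanest route is to first prove the estimate conditional on $\{$successful at $\l-1\}\cap\{$not successful at $\l\}$ and then note that for $\l\ge 2$ one can lower-bound $\P(\text{successful at }\l-1)$ away from $0$ by the same argument run one step earlier, or simply restate the lemma with the hypothesis that the coupling was successful at some earlier iteration).

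The key step is the following decorrelation estimate: on the event that the coupling fails at iteration $\l$, the two paths $\esing_1[0,a_{\l-1}]$ and $\esing_2[0,a_{\l-1}]$ differ only because one of them ($\esing_2$, say) had its increments over $[a_{\l-2}, a_{\l-1}]$ resampled; but resampling happened because some earlier $\esing_{0,j}$ ($j \le \l-1$) failed to escape it. Tracing back, the \emph{total} discrepancy between $\esing_1$ and $\esing_2$ on $[0,a_{\l-1}]$ is confined to the interval $[a_{\l_0-1}, a_{\l-1}]$ where $\l_0$ is the last iteration before $\l$ at which the coupling succeeded. To get a \emph{bounded}-length discrepancy I would instead argue directly on escape events: conditional on failure at $\l$, the walk $\esing_{0,\l}$ did not escape $\esing_2[0,a_{\l-1}]$, i.e. it hit $\esing_2[0,a_{\l-1}] - \esing_2(a_{\l-1})$; by Lemma~\ref{lemma:hittinglongtime} (applied to the reversed tail) and Lemma~\ref{lemma:hittinglongtime2}, for $a_{\l-1}$ large relative to $a_{\l-2}$ this hit occurs, with probability $\ge 1-\varepsilon$, within the first $b$ steps after $a_{\l-1}$ for a fixed $b = b(\varepsilon)$, and likewise the discrepancy between the two tails $\esing_1[a_{\l-1},\cdot]$ and $\esing_2[a_{\l-1},\cdot]$ at the moment of resampling is localized. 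In short: choose the gaps $a_\l - a_{\l-1}$ growing fast enough (geometrically, say $a_\l = 2 a_{\l-1}$, or faster) so that all the hitting-time tails from Lemmas~\ref{lemma:hittinglongtime} and~\ref{lemma:hittinglongtime2} are summably small; then conditional on failure at $\l$, except on an event of probability $\le 1/2$, the configuration $(\esing_1[0,a_\l], \esing_2[0,a_\l])$ looks like a pair of infinite SAWs conditioned to start with a bounded-length pattern (the common prefix plus the two short divergent pieces). Applying Lemma~\ref{lemma:avoidingconditioned} to each of $\esing_1[0,a_\l]$ and $\esing_2[0,a_\l]$ (using a union bound and the fact that $\P(\text{SAW}_{0,\l+1}\text{ escapes both}) \ge 1 - 2(1-A^{-1}) - o(1)$ — wait, $A^{-1}$ may be small; instead I would use that the escape-both probability equals, by the two-point-function estimates, $c_{2 a_\l}/c_{a_\l}^2$ up to the initial conditioning, which is bounded below), we get that $\esing_{0,\l+1}$ escapes both with probability bounded below by a constant $c > 0$, uniformly in $\l$.

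The main obstacle I expect is precisely the last bound: Lemma~\ref{lemma:avoidingconditioned} gives a lower bound of $A^{-1}$ for the probability that \emph{one} fresh SAW escapes \emph{one} long conditioned SAW, but here I need a fresh SAW to escape \emph{two} long SAWs simultaneously, and those two SAWs are not independent of each other (they share a long common prefix and diverge only near $a_{\l-1}$). The right way around this is to observe that escaping both $\esing_1[0,a_\l]$ and $\esing_2[0,a_\l]$ is the same as escaping their \emph{union} $\esing_1[0,a_\l] \cup \esing_2[0,a_\l]$, and this union is itself (up to the bounded-length divergent piece, which by Lemma~\ref{lemma:twopoint} contributes only an $O(\text{const})$ multiplicative loss to the escape probability) a single self-avoiding object of the form "long SAW with a bounded decoration"; one then reruns the Jensen/counting argument of Lemma~\ref{lemma:avoidingconditioned} with $\xi$ replaced by this decorated pattern, or, more simply, bounds $\P(\esing_{0,\l+1} \text{ hits either path near its far end}) $ using Lemma~\ref{lemma:hittinglongtime2} and $\P(\esing_{0,\l+1} \text{ hits the bounded decoration})$ using Lemma~\ref{lemma:twopoint} with the displacement estimate Lemma~\ref{lemma:traveldistance}, making both small by taking $a_\l$ large, so that $\P(\text{escape both}) \ge A^{-1} - \varepsilon > 0$. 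Choosing $\varepsilon$ small and $c = A^{-1}/2$ (after also absorbing the initial $1/2$ from the localization step, so really $c = A^{-1}/4$ or so) completes the proof; the sequence $(a_\l)$ is the one produced along the way, any sufficiently rapidly growing sequence working.
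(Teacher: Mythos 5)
Your proposal goes off the rails at precisely the point where you second-guess yourself. In the middle you write down the inclusion--exclusion bound
\begin{equation*}
\P\bigl(\esing_{0,\l+1}\text{ escapes both}\bigr)\;\geq\;2A^{-1}-1-o(1),
\end{equation*}
and then abandon it because ``$A^{-1}$ may be small.'' It is not: Hara and Slade showed that $A<2$ for $d\geq5$, so $2A^{-1}-1>0$, and this is exactly the bound the paper uses. The paper's proof applies Lemma~\ref{lemma:avoidingconditioned} to get $\P(\esing_{0,\l+1}\esc\esing_i[0,a_\l]\mid\cdots)\geq A^{-1}(1-\varepsilon)$ for each $i\in\{1,2\}$ separately (after replacing the infinite conditioning event $\{\esing[a_{\l-1},\infty)-\zeta_1(a_{\l-1})\nesc\zeta_2\}$ by a finite-coordinate one at cost a factor $\tfrac{1}{2}$), takes the minimum of the two, and then concludes by $\P(\text{escape both})\geq 2\min_i\P(\text{escape }\esing_i)-1\geq 2A^{-1}(1-\varepsilon)-1-\tfrac{1}{2}\cdot(\cdots)>0$ once $\varepsilon$ is small. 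No localization of the discrepancy is needed.

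The workaround you propose instead -- treating $\esing_1[0,a_\l]\cup\esing_2[0,a_\l]$ as ``a long SAW with a bounded decoration'' -- rests on a false premise. On the event that the coupling fails at iteration $\l$, the walk $\esing_2$ has its \emph{entire} increment over $[a_{\l-1},a_\l]$ resampled independently as a fresh SAW conditioned to escape $\esing_2[0,a_{\l-1}]$, while $\esing_1$ receives the increment $\esing_{0,\l}[0,a_\l-a_{\l-1}]$. These two increments are independent samples of length $a_\l-a_{\l-1}\to\infty$; there is no reason for them to agree except trivially at time $a_{\l-1}$. So the two paths can, and typically do, diverge over an \emph{unbounded} stretch, and the union is not ``a long SAW plus $O(1)$ vertices.'' The hitting estimates you invoke (Lemmas~\ref{lemma:hittinglongtime}, \ref{lemma:hittinglongtime2}, \ref{lemma:twopoint}) control where a fresh walk \emph{hits} an old path; they do not say that the resampled tail of $\esing_2$ stays close to $\esing_{0,\l}$. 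Without the localization claim, the argument ``$\P(\text{escape both})\geq A^{-1}-\varepsilon$'' has no support, and you would need the genuine $2A^{-1}-1$ bound -- which was the idea you discarded.

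In short: keep the inclusion--exclusion step, cite $A<2$ from \cite{hara92selfavoiding} to make it nontrivial, and drop the localization argument entirely, as it does not hold under the coupling construction.
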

\begin{proof}
If $\esing_1$ and $\esing_2$ are not successfully coupled at iteration~$\l$, then it means that $\esing_{0,\l}$ at iteration $\l$ escaped $\esing_2[0,a_{\l-1}]$ but not $\esing_1[0,a_{\l-1}]$ or the other way around. Hence,
\begin{equation}
    \begin{split}
        &\P(\text{coupling is successful at iteration $\l+1$}\mid \text{coupling is not successful at iteration $\l$})\\
        \geq&\min_{\zeta_1,\zeta_2}\P(\esing_{0,\l+1}\esc\esing_1[0,a_\l]\text{ and }\esing_2[0,a_\l]\\
        &\mid\esing_1[0,a_{\l-1}]=\zeta_1,\,\esing_2[0,a_{\l-1}]=\zeta_2,\,\esing_{0,\l}\esc\zeta_1,\,\esing_{0,\l}\nesc\zeta_2),
        \end{split}
\end{equation}
where the minimum is taken over all pahts $\zeta_1,\zeta_2\in\SAW_{a_{\l-1}}$ such that the conditioning event has positive probability. Note that by the DMP, on the above conditioning, $\eta_1$ has the distribution of an infinite SAW $\esing$ conditioned on the event $\{\esing[0,a_{\l-1}]=\zeta_1,\,\esing[a_{\l-1},\infty)-\zeta(a_{\l-1})\nesc\zeta_2\}$. Thus, the above becomes
\begin{equation}\label{eq:secondstepsuccess}
	\begin{split}
		&\min_{\zeta_1,\zeta_2}\P(\esing_{0,\l+1}\esc\esing[0,a_\l]\text{ and }\esing_2[0,a_\l]\\
        &\mid\esing[0,a_{\l-1}]=\zeta_1,\,\esing_2[0,a_{\l-1}]=\zeta_2,\esing[a_{\l-1},\infty)-\zeta(a_{\l-1})\nesc\zeta_2)\\
        \geq&\min_{\zeta_1,\zeta_2}\P(\esing_{0,\l+1}\esc\esing[0,a_\l]\text{ and }\eta_2[0,a_\l]\\
        &\mid\esing[0,a_{\l-1}]=\zeta_1,\,\eta_2[0,a_{\l-1}]=\zeta_2,\,\eta[a_{\l-1},\tilde{a}_\l]-\zeta_1(a_{\l-1})\nesc\zeta_2)\\
        &\cdot\P(\esing[a_{\l-1},\tilde{a}_\l]-\zeta(a_{\l-1})\nesc\zeta_2\\
        &\mid\esing[0,a_{\l-1}]=\zeta_1,\,\eta_2[0,a_{\l-1}]=\zeta_2,\,\eta[a_{\l-1},\tilde{a}_\l]-\zeta_1(a_{\l-1})\nesc\zeta_2)
   \end{split}
\end{equation}
for some $a_{\l-1}\leq\tilde{a}_\l\leq a_{\l}$ to be specified later. Note that for all paths $\zeta_1,\zeta_2$ of length $a_{\l-1}$,
\begin{equation}
    \begin{split}
        1=&\P(\esing_{0,\l}[a_{\l-1},\infty)-\zeta_1(a_{\l-1})\nesc\zeta_2\\
        &\mid\eta[0,a_{\l-1}]=\zeta_1,\,\eta_2[0,a_{\l-1}]=\zeta_2,\,\esing_{0,\l}[a_{\l-1},\infty)-\zeta_1(a_{\l-1})\nesc\zeta_2)\\
        =&\lim_{n\rightarrow\infty}\P(\esing_{0,\l}[a_{\l-1},n]-\zeta_1(a_{\l-1})\nesc\zeta_2\\
        &\mid\eta[0,a_{\l-1}]=\zeta_1,\,\eta_2[0,a_{\l-1}]=\zeta_2,\,\esing_{0,\l}[a_{\l-1},\infty)-\zeta_1(a_{\l-1})\nesc\zeta_2).
    \end{split}
\end{equation}
So for $\tilde{a}_\l$ large enough, we have
\begin{equation}
    \begin{split}
        &\min_{\zeta_1,\zeta_2}\P(\esing[a_{\l-1},\tilde{a}_\l]-\zeta(a_{\l-1})\nesc\zeta_2\\
        &\mid\esing[0,a_{\l-1}]=\zeta_1,\,\eta_2[0,a_{\l-1}]=\zeta_2,\,\eta[a_{\l-1},\tilde{a}_\l]-\zeta_1(a_{\l-1})\nesc\zeta_2)\geq\frac{1}{2}.
    \end{split}
\end{equation}
Thus, \eqref{eq:secondstepsuccess} is bounded from below by
\begin{equation}
	\begin{split}
        &\frac{1}{2}[2\min_{\zeta_1,\zeta_2}\P(\esing_{0,\l+1}\esc\esing[0,a_\l]\\
        &\mid\esing[0,a_{\l-1}]=\zeta_1,\,\eta[a_{\l-1},\tilde{a}_\l]-\zeta_1(a_{\l-1})\nesc\zeta_2)-1].
	\end{split}
\end{equation}
Note that the conditioning in the last probability above depends only on the first $\Tilde{a}_{\l}$ steps. So by Lemma \ref{lemma:avoidingconditioned}, there exists $m_0=m_0(\Tilde{a}_\l,\varepsilon)$ such that if we choose $a_\l>m_0$, then the last line is bounded from below by $A^{-1}(1-\varepsilon)-\frac{1}{2}$. It was shown in \cite{hara92selfavoiding} that for $d\geq 5$, $A<2$, so for $\varepsilon$ small enough $A^{-1}(1-\varepsilon)-\frac{1}{2}>0$, which concludes the proof.
\end{proof}

\subsubsection{Probability of staying successfully coupled}\label{sec:stayingcoupled}
We now show that the probability that two successfully coupled walks uncouple is exponentially small in $\l$, if we choose $a_\l$ to grow sufficiently fast. If the two walks are successfully coupled at iteration $\l$, then they disagree on at most the interval $[0,a_{\l-1}]$, which is relatively small compared to $[0,a_\l]$ if $a_\l$ grows exponentially. So the probability of $\esing_{0,\l}$ hitting one walk but not the other is small, which means the probability of uncoupling is small.
\begin{lemma}\label{lemma:probuncouple}
Let $\esing_1,\esing_2$ be infinite SAWs conditioned to start with $\zeta_1$ and $\zeta_2$ respectively, coupled as above. Then there exists a sequence $(a_{\l})_{\l}$ and $C=C(\zeta_1,\zeta_2)$ such that for all $\l$,
\begin{equation}
    \P(\text{coupling is not successful at iteration $\l+1$},\, \text{coupling is successful at iteration $\l$})\leq Ce^{-\l}.
\end{equation}
\end{lemma}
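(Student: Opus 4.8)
The plan is to condition on the history of the coupling. Let $\mathcal{G}_\l$ be the $\sigma$-algebra generated by the first $\l$ iterations; it determines $\esing_1[0,a_\l]$, $\esing_2[0,a_\l]$ and the event $F_\l:=\{\text{coupling successful at iteration }\l\}$. On $F_\l$ the walks $\esing_1$ and $\esing_2$ have the same increments on $[a_{\l-1},a_\l]$, so, recentring each $\esing_i[0,a_\l]$ at its tip $\esing_i(a_\l)$, the two recentred sets agree except on the ``old'' pieces $A_i:=\esing_i[0,a_{\l-1}]-\esing_i(a_\l)$. Consequently the fresh infinite SAW $\esing_{0,\l+1}$ used at iteration $\l+1$ can escape exactly one of $\esing_1[0,a_\l],\esing_2[0,a_\l]$ only if it meets $A_1\cup A_2$, and on $F_\l$ ``not successful at $\l+1$'' is exactly this event. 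Using that $\esing_{0,\l+1}$ is independent of $\mathcal{G}_\l$, a union bound and the two-point estimate Lemma~\ref{lemma:twopoint}, I would obtain, on $F_\l$,
\[
\P(\text{not successful at }\l+1\mid\mathcal{G}_\l)\ \le\ \frac{C}{q(\mathcal{G}_\l)}\sum_{i=1,2}\sum_{j=0}^{a_{\l-1}}\|\esing_i(j)-\esing_i(a_\l)\|^{2-d},
\]
where $q(\mathcal{G}_\l)$ is the probability that a fresh infinite SAW escapes at least one of $\esing_1[0,a_\l],\esing_2[0,a_\l]$ — the denominator being forced by the resampling in the coupling.

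Taking expectations (dropping $\1_{F_\l}\le1$), I would then remove the remaining conditionings \emph{on average} rather than pointwise. The key tool is \eqref{eq:dmpaltsing}: since $\P(\esing[0,m]=\omega)=\mu^{-m}\P(\esing\esc\omega)$, summing over $\omega$ shows that reciprocals of escape probabilities have bounded average, e.g. $\sum_{\omega}\P(\esing[0,m]=\omega)\P(\esing\esc\omega)^{-1}=\mu^{-m}\#\{\text{realisable }\omega\text{ of length }m\}\le\mu^{-m}c_m$, which is bounded in $m$ because $c_m\sim A\mu^m$; the same identity relativised to $\zeta_i$ turns the conditioning ``$\esing_i[0,k]=\zeta_i$'' into a factor $\P(\esing[0,k]=\zeta_i)^{-1}$, and this is the source of the constant $C(\zeta_1,\zeta_2)$ in the statement. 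After these reductions, and using the domain Markov property (Theorem~\ref{th:dmp}) to write $\esing(a_\l)-\esing(j)$ as the displacement at time $a_\l-j\ge a_\l-a_{\l-1}$ of an (escape-conditioned) infinite SAW, the problem reduces to bounding $\E\big[\|\esing(t)\|^{2-d}\big]$ for large $t$.

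This last quantity tends to $0$: by Lemma~\ref{lemma:traveldistance}, $\|\esing(t)\|\ge t^{1/4}$ off an event of vanishing probability, so $\E[\|\esing(t)\|^{2-d}]\le t^{(2-d)/4}+\P(\|\esing(t)\|\le t^{1/4})=:g(t)\to0$. Assembling, one gets $\P(\text{not successful at }\l+1,\ F_\l)\le C(\zeta_1,\zeta_2)\,a_{\l-1}\,g(a_\l-a_{\l-1})$. Now I would define $(a_\l)$ recursively: having fixed $a_0,\dots,a_{\l-1}$, pick $a_\l$ large enough that both the growth hypothesis of Lemma~\ref{lemma:probcouple} holds and $a_{\l-1}\,g(a_\l-a_{\l-1})\le e^{-\l}$; this is possible precisely because $g$ vanishes at infinity, no rate of convergence being needed. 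This yields the claimed bound.

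The main obstacle is the step ``remove the conditionings on average'': pointwise, neither $q(\mathcal{G}_\l)^{-1}$ nor the escape-conditioning produced by the domain Markov property is under control, since a complicated past can be genuinely hard to escape. The remedy is that we only ever use bounded \emph{averages} of these reciprocals, which \eqref{eq:dmpaltsing} together with $c_n\sim A\mu^n$ supply — at the cost, for the resampling factor, of first truncating $q(\mathcal{G}_\l)^{-1}$ at a level chosen before $a_\l$ and estimating the small remainder separately using that $a_{\l-1}$ times the truncation tail is small. Everything else — the hitting estimates of Section~\ref{sec:hitting} and the compatibility of the various lower bounds on $a_\l$ with those required in Lemma~\ref{lemma:probcouple} — is routine.
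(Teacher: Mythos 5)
Your proof follows the same overall scheme as the paper's: decompose on the event $F_\l$ that the walks are coupled at step $\l$, observe that then the recentred path pieces $\esing_i[0,a_\l]-\esing_i(a_\l)$ agree except on the ``old'' portions $A_i$, bound the chance of $\esing_{0,\l+1}$ hitting $A_1\cup A_2$ via the hitting estimates of Section~\ref{sec:hitting}, and choose $a_\l$ recursively to make this smaller than $e^{-\l}$. Where you diverge from the paper is in the treatment of the resampling normalisation. The paper encodes it as a self-bounding recursion
\begin{equation*}
P\ \le\ T_1+T_2+T_3\,P,\qquad T_3=\P\bigl(\esing_{0,\l+1}\nesc\esing_1[0,a_\l]\bigr),
\end{equation*}
solving to $P\le(T_1+T_2)/(1-T_3)$ with $T_3<1-\varepsilon$ coming from Lemma~\ref{lemma:avoidingconditioned}. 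You instead keep the conditional normaliser $q(\mathcal{G}_\l)^{-1}$ inside the expectation and control it through the identity~\eqref{eq:dmpaltsing}, which gives $\E\bigl[\P(\esing_0\esc\esing_1[0,a_\l]\mid\esing_1[0,a_\l])^{-1}\bigr]\le\P(\esing[0,k]=\zeta_1)^{-1}\,c_{a_\l}\mu^{-a_\l}=O(1)$; combined with the trivial bound $\min(1,h/q)\le h/\varepsilon+\1_{q<\varepsilon}$ and a Markov inequality for $\P(q<\varepsilon)$, optimising $\varepsilon$ yields a bound of order $\sqrt{\E[h]}$, which is still $\lesssim e^{-\l}$ after choosing $a_\l$ large. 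This is a legitimate alternative and arguably makes the role of the resampling more explicit; I would only encourage you to write the truncation step precisely (optimising the cut level rather than saying ``$a_{\l-1}$ times the truncation tail is small''). One place where the paper's route is simpler: rather than invoking the domain Markov property to express the displacement $\esing_i(a_\l)-\esing_i(j)$ as an escape-conditioned walk, the paper just splits on $\{\|\esing_i(a_\l)\|\ge a_\l^{1/4}\}$, applies Lemma~\ref{lemma:twopoint} on that event and Lemma~\ref{lemma:traveldistance} on the complement, and absorbs the fixed conditioning $\esing_i[0,k]=\zeta_i$ into the constant $C(\zeta_1,\zeta_2)$; you may want to streamline your argument along those lines.
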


\begin{proof}
Assume $\esing_1$ and $\esing_2$ are coupled at iteration $\l$. Then, since $\esing_1$ and $\esing_2$ take the same steps on the interval $[a_{\l-1},a_\l]$, in order for them to uncouple at the next iteration, $\esing_{0,\l}$ must have hit~$\esing_1[0,a_{\l-1}]-\esing_1(a_\l)$ and not hit $\esing_2[0,a_{\l-1}]-\esing_1(a_\l)$ or the other way around, or $\esing_{0,\l}$ must have not escaped both $\esing_1[0,a_\l]$ and $\esing_2[0,a_{\l}]$. In the latter event, $\eta_{0,\l}$ is resampled, so the latter event is independent of the probability that the coupling is successful at iterations $\l$ and $\l+1$. Hence, by a union bound,
\begin{equation}\label{eq:unionbounduncoupling}
    \begin{split}
        &\P(\text{coupling is not successful at iteration $\l+1$},\, \text{coupling is successful at iteration $\l$})\\
        \leq&\P(\eta^\infty[1,\infty)\cap(\esing_1[0,a_{\l-1}]-\esing_1(a_\l))\neq\varnothing)\\
        &+\P(\eta^\infty[1,\infty)\cap(\esing_2[0,a_{\l-1}]-\esing_1(a_\l))\neq\varnothing)\\
        &+\P(\eta^\infty\nesc\esing_1[0,a_{\l}])\\
        &\cdot\P(\text{coupling is not successful at iteration $\l+1$},\, \text{coupling is successful at iteration $\l$})
    \end{split}
\end{equation}
We now bound each of the terms above. By Lemmas \ref{lemma:traveldistance} and \ref{lemma:twopoint} and a union bound, we can choose $a_{\l}$ sufficiently large depending on $a_{\l-1}$ such that
\begin{equation}
    \begin{split}
        &\P(\eta^\infty[1,\infty)\cap(\esing_1[0,a_{\l-1}]-\esing_1(a_\l))\neq\varnothing)\\
        \leq&\P(\eta^\infty[1,\infty)\cap(\esing_1[0,a_{\l-1}]-\esing_1(a_\l))\neq\varnothing,\,\|\esing_1(a_\l)\|\geq a_\l^{1/4})+\P(\|\esing_1(a_\l)\|<a_\l^{1/4})\\
        \lesssim&a_{\l-1}(a_\l^{1/4}-a_{\l-1})^{2-d}+e^{-\l}\\
        \lesssim&e^{-\l}.
    \end{split}
\end{equation}
The second term of \eqref{eq:unionbounduncoupling} can be bounded in the same way. Furthermore, by Lemma \ref{lemma:avoidingconditioned}, there exists $\varepsilon>0$ such that for all $\l$ large enough,
\begin{equation}
    \begin{split}
        \P(\eta^\infty\nesc\teta_1[0,a_{\l}])<1-\varepsilon.
    \end{split}
\end{equation}
Combining the above, we obtain
\begin{equation}
    \begin{split}
        \P(\text{coupling is not successful at iteration $\l+1$},\, \text{coupling is successful at iteration $\l$})\lesssim e^{-\l},
    \end{split}
\end{equation}
which is what we wanted to show.
\end{proof}

\subsubsection{Proof of ergodicity}\label{sec:proofoferg}
\begin{proposition}\label{prop:coupling}
Let $\esing_1,\esing_2$ be infinite SAWs conditioned to start with $\zeta_1$ and $\zeta_2$ respectively, coupled as above. Then there exist $L=L(\zeta_1,\zeta_2)$, $C=C(\zeta_1,\zeta_2)$ and a sequence $(a_\l)_{\l\geq0}$ such that for all $\l\geq L$
\begin{equation}
    \P(\text{coupling is not successful at iteration $\l$})\leq e^{-C\l}.
\end{equation}
\end{proposition}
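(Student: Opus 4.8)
The plan is to feed Lemma \ref{lemma:probcouple} and Lemma \ref{lemma:probuncouple} into a single recursive estimate for the probability of failure and then solve that recursion. Write $S_\l$ for the event that the coupling is successful at iteration $\l$, and set $p_\l:=\P(S_\l^c)$. First I would fix the sequence $(a_\l)_{\l\geq0}$ once and for all: the proofs of Lemmas \ref{lemma:probcouple} and \ref{lemma:probuncouple} only require each $a_\l$ to be chosen sufficiently large in terms of $a_0,\dots,a_{\l-1}$ (and the auxiliary lengths $\tilde a_\l$), so I build $(a_\l)$ inductively so that the requirements of \emph{both} lemmas are met at every step. For this sequence there are then constants $c>0$, $C=C(\zeta_1,\zeta_2)$ and an index $L_0=L_0(\zeta_1,\zeta_2)\geq2$ such that $\P(S_{\l+1}\mid S_\l^c)\geq c$ and $\P(S_{\l+1}^c\cap S_\l)\leq Ce^{-\l}$ for all $\l\geq L_0$.

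Next, for $\l\geq L_0$ I would split according to whether $S_\l$ occurs:
\begin{align*}
p_{\l+1} &=\P(S_{\l+1}^c\cap S_\l)+\P(S_{\l+1}^c\cap S_\l^c)\\
&\leq Ce^{-\l}+\P(S_{\l+1}^c\mid S_\l^c)\,p_\l\leq Ce^{-\l}+(1-c)\,p_\l,
\end{align*}
using Lemma \ref{lemma:probuncouple} for the first term and Lemma \ref{lemma:probcouple}, via $\P(S_{\l+1}^c\mid S_\l^c)=1-\P(S_{\l+1}\mid S_\l^c)$, for the second (when $p_\l=0$ the inequality $\P(S_{\l+1}^c\cap S_\l^c)\leq(1-c)p_\l$ is trivial).

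It then remains to solve this linear recursion. Putting $\rho:=\max(1-c,\,e^{-1})<1$ we obtain $p_{\l+1}\leq\rho\,p_\l+C\rho^\l$ for $\l\geq L_0$; unrolling from $L_0$ with $p_{L_0}\leq1$ and $e^{-j}\leq\rho^j$ gives
\begin{align*}
p_\l &\leq\rho^{\l-L_0}+C\sum_{j=L_0}^{\l-1}\rho^{\l-1-j}e^{-j}\\
&\leq\rho^{\l-L_0}+C(\l-L_0)\rho^{\l-1}\leq C'\,\l\,\rho^{\l}
\end{align*}
for a suitable $C'=C'(\zeta_1,\zeta_2)$. Finally, for any fixed $C<-\log\rho$ the factor $\l$ is absorbed once $\l$ is large, so $p_\l\leq e^{-C\l}$ for all $\l\geq L$ with $L=L(\zeta_1,\zeta_2)$ chosen accordingly, which is exactly the statement.

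I do not expect a genuine obstacle here: the probabilistic content is already carried by Lemmas \ref{lemma:probcouple} and \ref{lemma:probuncouple}, and what is left is a deterministic Gronwall-type estimate. The two points that need a little care are (i) producing a single sequence $(a_\l)$ meeting the hypotheses of both lemmas simultaneously — possible because each lemma only bounds $a_\l$ from below in terms of the earlier terms — and (ii) checking that $c$ is indeed a lower bound for the \emph{conditional} probability $\P(S_{\l+1}\mid S_\l^c)$, so that the step $\P(S_{\l+1}^c\cap S_\l^c)\leq(1-c)p_\l$ is legitimate; both are immediate from the lemma statements.
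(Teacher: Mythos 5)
Your proposal is correct and follows essentially the same route as the paper: the same decomposition $p_{\l+1}=\P(S_{\l+1}^c\cap S_\l)+\P(S_{\l+1}^c\cap S_\l^c)$ fed by Lemmas \ref{lemma:probcouple} and \ref{lemma:probuncouple}, and the same requirement that a single sequence $(a_\l)$ be chosen to satisfy both lemmas at once. The only cosmetic difference is that you unroll the linear recursion $p_{\l+1}\leq Ce^{-\l}+(1-c)p_\l$ explicitly to get a bound of the form $\l\rho^\l$ and then absorb the polynomial factor, whereas the paper closes the argument by a direct induction on the hypothesis $p_\l\leq e^{-C\l}$ with $C$ taken small; both are valid ways to solve the same recursion.
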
  

\begin{proof}
Let $a_\l$ be the maximum of $e^\l$ and the $a_\l$'s from Lemma \ref{lemma:probcouple}. The proof is by induction. There clearly exists an $L_1$ such that the probability that the walks are coupled at iteration $L_1$ is positive, so the base case is true. 

Assume the assertion is true for $\l$. Then by Lemmas \ref{lemma:probcouple} and \ref{lemma:probuncouple}, there exist $c,C_1$ such that
\begin{equation}
    \begin{split}
        &\P(\text{coupling is not successful at iteration $\l+1$})\\
        =&\P( \text{coupling is not successful at iteration $\l+1$},\,\text{coupling is successful at iteration $\l$})\\
        &+\P(\text{coupling is not successful at iteration $\l+1$}\mid \text{coupling is not successful at iteration $\l$})\\
        &\cdot\P(\text{coupling is not successful at iteration $\l$})\\
        \leq&C_1e^{-\l}+(1-c)e^{-C\l}\leq e^{-C(\l+1)}
    \end{split}
\end{equation}
if $C$ is chosen to be small enough.
\end{proof}

\begin{proof}[Proof of Proposition \ref{prop:ergodicity}]
Let $\esing_1$ and $\esing_2$ be independent infinite self-avoiding walks conditioned to start with paths $\zeta_1$ and $\zeta_2$ respectively, both of length $k$. Let $\eta_1'$ and $\eta_2'$ be the coupling of $\esing_1$ and $\esing_2$ as described in Section \ref{sec:constructioncoupling}. Then by Proposition \ref{prop:coupling},
\begin{equation}
	\begin{split}
		\limm\|T^m\esing_1-T^m\esing_2\|_{\TV}\leq&\limm\P(T^{m}\eta'_1\neq T^m\eta'_2)\\
		=&\lim_{L\rightarrow\infty}\P(T^{a_L}\eta'_1\neq T^{a_L}\eta'_2)\\
		\leq&\lim_{L\rightarrow\infty}\sum_{\l\geq L}\P(T^{a_{\l-1}}\eta'_1[0,a_{\l}-a_{\l-1}]\neq T^{a_{\l-1}}\eta'_2[0,a_{\l}-a_{\l-1}])=0,
	\end{split}
\end{equation}
which concludes the proof.
\end{proof}

\begin{remark}
Note that in the above proof, we in fact show that almost surely there exists $m$ such that $T^m\eta'_1=T^m\eta'_2$. This follows from Borel-Cantelli and the fact that $\sum_{\l\geq1}\P(T^{a_\l}\eta'_1\neq T^{a_\l}\eta'_2)<\infty$.
\end{remark}

\section{Convergence in probability}\label{sec:probabilityconv}
In this section, we prove Theorem \ref{th:mainpatterns}.(ii).  To obtain convergence in probability, we show that the variance of $\pi_\zeta(n,\eta^n)$ tends to 0. In order to do this, we must decorrelate parts of the SAW that are far apart. The decorrelation strategy is exactly the same one as the proof of ergodicity for the infinite SAW. We define a finite, two-sided version of the coupling from Section \ref{sec:coupling}. As before, the coupling has probability bounded away from 0 to be successful at each iteration and once the walks are successfully coupled, the probability that future iterations are unsuccessful is small. Since the proofs are similar, we omit many details.

In Section \ref{sec:constructioncouplingtwosided}, we construct the two-sided coupling. We show asymptotic independence of far away parts of the SAW in Section \ref{sec:decorrelationtwosided}. Finally, we give the proof of Theorem \ref{th:mainpatterns}.(ii) in Section \ref{sec:convergenceinprob}.

\subsection{Two-sided coupling}\label{sec:constructioncouplingtwosided}
We define a coupling for finite, two-sided SAWs. Let $\eta^{m,n}_1$ and $\eta^{m,n}_2$ be finite, two-sided SAWs conditioned such that $\eta_1^{m,n}[-k,k]=\zeta_1$ and $\eta^{m,n}_2[-k,k]=\zeta_2$. Let $(a_\l)_\l$ be an increasing sequence with $a_0=k$. Assume that we have defined $\eta_{1}^{m,n}[-a_\l,a_\l]$ and $\eta_{2}^{m,n}[-a_\l,a_\l]$. To obtain the next iteration, we sample two independent one-sided SAWs $\eta_{-,\l}:=\eta_-^{(m-a_{\l})_+}$ and $\eta_{+,\l}:=\eta_+^{(n-a_\l)_+}$.

If appending $\eta_{-,\l}$ and $\eta_{+,\l}$ to respectively the start and end of $\eta_1^{m,n}[-a_\l,a_\l]$ results in a self-avoiding path of length $m+n$ which we call $\eta_0^{m,n}$, then set $\eta_1^{m,n}[-a_{\l+1},a_{\l+1}]=\eta_0^{m,n}[-a_{\l+1},a_{\l+1}]$. Do the same for $\eta_2^{m,n}$. If both paths are not self-avoiding, we resample $\eta_{-,\l}$ and $\eta_{+,\l}$. If one of the two paths is not self-avoiding, we resample that walk independently.

So $\eta^{m,n}_1$ and $\eta^{m,n}_2$ are successfully coupled when appending $\eta_{-,\l}^{m-a_{\l}}$ and $\eta_{+,\l}$ to the starts and ends of $\eta_2^{m,n}[-a_\l,a_\l]$ and $\eta_2^{m,n}[-a_\l,a_\l]$ results in two self-avoiding paths. The coupling is unsuccessful if one of the paths is self-avoiding and the other one is not. If both are not self-avoiding, we resample. If $a_\l\geq m$ or $a_\l\geq n$, we simply let $\eta_{-,\l}$ or $\eta_{+,\l}$ respectively be a path of length 0. This procedure is repeated until $a_\l\geq m\vee n$.

Note that a two-sided SAW $\eta^{m,n}$ conditional on $\eta^{m,n}[-k,k]=\zeta$ is distributed as the concatenation of a SAW of length $m-k$ started from~$\zeta(-k)$, $\zeta$, and a SAW of length $n-k$ started from~$\zeta(k)$, all conditioned not to intersect each other. It follows that, under this coupling, $\eta_1^{m,n}$ and $\eta_2^{m,n}$ are indeed distributed as finite two-sided SAWs conditioned to start with $\zeta_1$ and $\zeta_2$ respectively. Convergence in probability of~$\pi_{\zeta}(n,\eta^n)$ will follow from the following proposition, which we prove in the next section.

\begin{proposition}\label{prop:ergodicitytwosided}
For all $\zeta\in\SAW_k$ and $\varepsilon>0$, there exists $i_0=i_0(k,\varepsilon)$  and $m_0=m_0(k,\varepsilon)$ such that for all $i\geq i_0$ and $m,n\geq m_0$,
\begin{equation}
    \P(\eta_1^{m,n}[-i,-i-k]\neq\eta_2^{m,n}[-i,-i-k]\text{ or }\eta_1^{m,n}[i,i+k]\neq\eta_2^{m,n}[i,i+k])<\varepsilon.
\end{equation}
\end{proposition}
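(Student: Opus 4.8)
The plan is to mirror the one-sided argument from Section \ref{sec:ergodicityproof} almost verbatim, adapting the two key lemmas (Lemma \ref{lemma:probcouple} and Lemma \ref{lemma:probuncouple}) to the finite two-sided setting, and then running the same Borel--Cantelli bookkeeping as in Proposition \ref{prop:coupling}. First I would reduce the statement to showing that there exist a sequence $(a_\l)_\l$ (with $a_0=k$, say $a_\l=\max(e^\l,\text{(values below)})$) and constants $C,c,L$, independent of $m,n$ for $m,n\geq m_0$, such that
\begin{equation}
\P(\text{coupling is not successful at iteration }\l)\leq e^{-C\l}\quad\text{for all }L\leq\l\text{ with }a_\l\leq m\wedge n,
\end{equation}
since on the event that the coupling is successful at some iteration $\l$ with $a_\l\geq i+k$ the two walks agree on $[-i-k,-i]$ and $[i,i+k]$; choosing $i_0$ so that $a_{\l}\geq i_0+k$ forces $\l\geq L$, and then summing the geometric tail gives the bound $\varepsilon$. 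The crucial point, exactly as in the one-sided case, is that all the hitting estimates in Section \ref{sec:hitting} (Lemmas \ref{lemma:traveldistance}--\ref{lemma:hittinglongtime2}) are uniform in the length parameters, including the values $+\infty$, so the choice of $(a_\l)$ and of $L,C,c$ can be made uniformly in $m,n$.

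The analogue of Lemma \ref{lemma:probcouple} (positive probability of coupling successfully at iteration $\l+1$ given failure at iteration $\l$) goes as follows. If the coupling failed at iteration $\l$, then one of the two appended pairs $(\eta_{-,\l},\eta_{+,\l})$ escaped, say, $\eta_1^{m,n}[-a_{\l-1},a_{\l-1}]$ but not $\eta_2^{m,n}[-a_{\l-1},a_{\l-1}]$ (or vice versa). By the finite domain Markov property (Lemma \ref{lemma:dmpfin}) applied at times $\pm a_{\l-1}$, conditionally on this data $\eta_1^{m,n}$ is, on each side, a finite SAW of the appropriate length conditioned to escape a fixed finite path, and conditioned additionally on the non-escape event of $\eta_2$'s side. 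As in the proof of Lemma \ref{lemma:probcouple}, I would first argue that with probability at least $\tfrac12$ (for a well-chosen intermediate length $\tilde a_\l\leq a_\l$) the obstruction caused by this conditioning is confined to the first $\tilde a_\l-a_{\l-1}$ steps after $a_{\l-1}$ on the relevant side; then, conditionally on a path of length $\le\tilde a_\l$ on each side, apply the two-sided analogue of Lemma \ref{lemma:avoidingconditioned} --- i.e. that two independent finite SAWs conditioned to start with a fixed pattern avoid each other with probability $\geq A^{-1}-o(1)$ as the pattern's ``forgetting time'' grows, uniformly in the total lengths --- to conclude that appending fresh walks $\eta_{-,\l+1},\eta_{+,\l+1}$ escapes both $\eta_1^{m,n}[-a_\l,a_\l]$ and $\eta_2^{m,n}[-a_\l,a_\l]$ with probability $\geq A^{-1}(1-\varepsilon)-\tfrac12>0$ for $a_\l$ large enough (using $A<2$ from \cite{hara92selfavoiding}, and that the escape of a two-sided configuration by two one-sided walks is a product-type event up to the mutual non-intersection, itself $\gtrsim A^{-1}$ by Lemma \ref{lemma:hittinglongtime2}).

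The analogue of Lemma \ref{lemma:probuncouple} (exponentially small probability of uncoupling once coupled) is the more routine half: if the walks are successfully coupled at iteration $\l$ they agree outside $[-a_{\l-1},a_{\l-1}]$, so uncoupling at iteration $\l+1$ requires one of the fresh pairs to hit one walk's ``old core'' $\eta_i^{m,n}[-a_{\l-1},a_{\l-1}]-\eta_i^{m,n}(\pm a_\l)$ but not the other's, or to fail to escape both (in which case it is resampled and the event factors out as in \eqref{eq:unionbounduncoupling}). Using Lemma \ref{lemma:traveldistance} to say $\|\eta_i^{m,n}(\pm a_\l)\|\geq a_\l^{1/4}$ except on an event of probability $\lesssim e^{-\l}$, and then Lemma \ref{lemma:twopoint} with a union bound over the $\lesssim a_{\l-1}$ points of the old core, gives a bound $\lesssim a_{\l-1}(a_\l^{1/4}-a_{\l-1})^{2-d}+e^{-\l}\lesssim e^{-\l}$ once $a_\l$ is chosen to grow fast enough relative to $a_{\l-1}$ (e.g. exponentially). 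With these two lemmas in hand, the induction in Proposition \ref{prop:coupling} carries over unchanged to produce $\P(\text{not successful at iteration }\l)\leq e^{-C\l}$, and the reduction in the first paragraph finishes the proof.

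The main obstacle will be the finite two-sided analogue of Lemma \ref{lemma:avoidingconditioned}: I need a lower bound of the form $A^{-1}-o(1)$ on the probability that freshly sampled one-sided walks escape a two-sided configuration whose two arms are each a finite SAW conditioned to start with a fixed pattern, and this bound must be uniform in the (large) arm lengths $m-a_\l$ and $n-a_\l$. The one-sided Lemma \ref{lemma:avoidingconditioned} was proved via a Jensen/Cauchy--Schwarz argument on $\sum_{\zeta\succeq\xi}\P(\esing[0,m]=\zeta)^2$ together with $c_n\sim A\mu^n$; here I must either run the same computation with $c_n(\xi)$ replaced by finite two-sided counts and control the extra non-intersection constraint between the two arms (again $\gtrsim A^{-1}$, uniformly, by Lemma \ref{lemma:hittinglongtime2}), or — cleaner — deduce it by passing to the $m,n\to\infty$ limit via Theorem \ref{th:maintwosided} and Lemma \ref{lemma:hittinglongtime2} to transfer the infinite-case estimate back to all sufficiently large finite $m,n$. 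Getting the uniformity right while juggling the three conditionings (start with $\zeta_i$, the arms avoid each other, and the ``forgetting'' localization to the first $\tilde a_\l$ steps) is where the care is needed; everything else is a transcription of Section \ref{sec:ergodicityproof}.
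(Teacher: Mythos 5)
Your proposal follows the paper's own route essentially verbatim: reduce the statement to an exponential-decay bound on the probability that the two-sided finite coupling fails at iteration $\l$ (this is exactly Proposition \ref{prop:couplingtwosided} in the paper), prove two-sided finite analogues of Lemmas \ref{lemma:probcouple} and \ref{lemma:probuncouple} (the paper's Lemmas \ref{lemma:probcoupletwosided} and \ref{lemma:probuncoupletwosided}), rely on uniformity in $m,n$ of the hitting estimates of Section~\ref{sec:hitting}, and isolate the two-sided analogue of Lemma~\ref{lemma:avoidingconditioned} as the key technical obstacle (the paper's Lemma~\ref{lemma:avoidingconditionedtwosided}). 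You even name the two options for proving that obstacle --- running Jensen on two-sided counts $c_{m,n}(\xi)$ or transferring the infinite-case bound via Theorem~\ref{th:maintwosided} --- and the paper uses the first. So the architecture is the same.

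One inaccuracy worth noting. You claim the two-sided analogue of Lemma~\ref{lemma:avoidingconditioned} gives a lower bound of $A^{-1}-o(1)$, so that the decisive inequality in the successful-coupling step is $A^{-1}(1-\varepsilon)-\tfrac12>0$, which only needs $A<2$. But the two-sided escape event involves stitching \emph{two} fresh one-sided walks onto both ends of the core and requiring the whole assembly to be self-avoiding; the Jensen computation then produces $c_{m+n}/(c_{m-l}c_{n-l}c_{2l})\to A^{-2}$, and the paper's Lemma~\ref{lemma:avoidingconditionedtwosided} accordingly gives $(1-\varepsilon)A^{-2}$, not $A^{-1}$. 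Pushing this through the union-bound step as in Lemma~\ref{lemma:probcouple} yields $2A^{-2}(1-\varepsilon)-1$, which requires $A<\sqrt{2}$ rather than $A<2$. The paper itself is terse here (it just says the proof ``proceeds as for Lemma~\ref{lemma:probcouple}'' and cites $A<2$), so this is not a gap introduced by you, but if you write this out you should track the exponent on $A$ carefully and make sure the required bound on $A$ is in fact available from the lace-expansion literature. Your other attribution slip --- crediting Lemma~\ref{lemma:hittinglongtime2} for the lower bound on mutual non-intersection, which is really Lemma~\ref{lemma:avoidtwoinf} --- is cosmetic.

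Finally, in your reduction be a little careful that ``coupling is successful at iteration $\l$'' only forces agreement on $[\pm a_{\l-1},\pm a_\l]$, so for $i$ with $a_\l-k<i\le a_\l$ the window $[i,i+k]$ straddles $a_\l$ and one also needs success at iteration $\l+1$; bounding by $\P(\text{unsuccessful at }\l)+\P(\text{unsuccessful at }\l+1)\le 2e^{-C\l}$ fixes this with no loss. The paper's three-line proof of Proposition~\ref{prop:ergodicitytwosided} has the same slight looseness.
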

This proposition says that far away from 0, $\eta_1^{m,n}$ and $\eta_2^{m,n}$ will be successfully coupled with high probability, uniformly in $m$ and $n$. So far away parts of the two-sided SAW have small correlations. Since the finite two-sided walk is simply a shift of the finite one-sided walk, it follows that far away parts of the finite SAW have small correlations. The law of large numbers will follow from this observation.

\subsection{Decorrelation}\label{sec:decorrelationtwosided}
We first prove a two-sided analogue of Lemma \ref{lemma:avoidingconditioned}. The proof is very similar. It is important to note that the bound is uniform in $m$ and $n$.
\begin{lemma}\label{lemma:avoidingconditionedtwosided}
Let $\eta^{m,n}$ be a two-sided self-avoiding walk and $\eta_1^{m-l}$, $\eta_2^{n-l}$ be independent, one-sided self-avoiding walks. For all $\varepsilon>0$ and $k$, there exists $L=L(\varepsilon,k)$ such that for all two-sided self-avoiding paths $\xi$ of length $2k$, $l\geq L$ and $m,n\geq l+\sqrt{l}$,
\begin{equation}
    \P(\eta_1^{m-l}[0,m-l]\oplus\eta^{m,n}[-l,l]\oplus\eta_2^{n-l}[0,n-l]\issaw\mid\eta^{m,n}[-k,k]=\xi)>(1-\varepsilon)A^{-2}.
\end{equation}
\end{lemma}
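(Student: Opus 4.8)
The statement is the two-sided analogue of Lemma~\ref{lemma:avoidingconditioned}, and I would follow that proof essentially verbatim, with the single one-point function $G$ replaced by the "bubble"-type quantity that governs non-intersection of two SAWs. The plan is first to reduce to a Jensen-type estimate for the conditional non-intersection probability, and then to absorb the effect of the finite lengths $m,n$ (as opposed to $m=n=\infty$) using the hitting estimates of Section~\ref{sec:hitting}, which is exactly where the hypothesis $m,n\geq l+\sqrt l$ is used.

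First I would rewrite the event. Conditioning on $\eta^{m,n}[-k,k]=\xi$, the domain Markov property (Lemma~\ref{lemma:dmpfin}, in its two-sided form stated just before the proposition) says that $\eta^{m,n}[-l,l]$, together with the two independent walks $\eta_1^{m-l},\eta_2^{n-l}$, is distributed as three independent finite SAWs of lengths $m-k$, $2k$ "glued at $\xi$", and $n-k$... more precisely it is cleaner to first peel off $\xi$ and write the target probability in terms of counts: the numerator is the number of $(m+n)$-step self-avoiding paths that pass through the fixed middle piece $\xi$ in the prescribed way, and the denominator involves $c_{m-k}\,c_{2k}(\xi$-type$)\,c_{n-k}$ suitably normalized. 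As in the proof of Lemma~\ref{lemma:avoidingconditioned}, rewriting everything via \eqref{eq:dmpaltsing} and $c_n\sim A\mu^n$ turns the conditional non-intersection probability into $\mu^{m+n-2k}$ times a sum of squares $\sum_{\zeta}\P(\cdot=\zeta)^2$ over the appropriate path families, and Jensen's inequality (Cauchy–Schwarz, exactly the step used before) lower-bounds this by $\big(\sum_\zeta \P(\cdot=\zeta)\big)^2$ divided by the number of terms, which after simplification yields the constant $A^{-2}$ in the $m=n=\infty$ limit — the square reflecting that now two independent walks must simultaneously escape, one at each end.

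Second I would handle the finiteness of $m$ and $n$. The clean identity above is exact only in the infinite limit; for finite $m,n$ there is an error coming from the difference between "$\eta_1^{m-l}$ escapes $\eta^{m,n}[-l,l]\oplus\eta_2^{n-l}$" and the infinite-walk escape event. This is controlled by Lemma~\ref{lemma:hittinglongtime2} and Lemma~\ref{lemma:hittinglongtime}: the probability that the tails beyond time $l$ of any of the three pieces create an intersection that would not be present for the first few steps tends to $0$ as $l\to\infty$, uniformly in $m,n$, provided $m,n$ are at least a little larger than $l$ — and $m,n\geq l+\sqrt l$ is exactly the margin that makes the travel-distance bound Lemma~\ref{lemma:traveldistance} give a useful displacement of order $l^{1/4}$ for the extra $\sqrt l$ steps past time $l$. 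So one chooses $L=L(\varepsilon,k)$ so large that all these tail-intersection errors are at most, say, $\varepsilon A^{-2}/2$, combines with weak convergence of finite SAW to infinite SAW on the first $l$ coordinates to replace the exact-infinite expression by its finite counterpart up to another $\varepsilon A^{-2}/2$, and concludes.

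The main obstacle, as in Lemma~\ref{lemma:avoidingconditioned}, is bookkeeping rather than a genuinely new idea: one must keep all three pieces (left tail, fixed middle, right tail) and their pairwise non-intersection constraints straight when applying the domain Markov property and when setting up the Cauchy–Schwarz step, and one must verify that every error estimate is genuinely uniform in both $m$ and $n$ (not just in one of them). The use of $\sqrt l$ rather than, say, a constant margin is the only slightly delicate point: it is there to ensure that the "extra" steps past time $l$ carry the endpoints far enough away (distance $\gtrsim l^{1/4}\to\infty$) that the two-point function bound $\|x\|^{2-d}$ summed over the $O(l)$ points of the middle/tails is $o(1)$; I would isolate this in a short sub-estimate mirroring the computation at the end of the proof of Lemma~\ref{lemma:hittinglongtime2}.
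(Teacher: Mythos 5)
Your Jensen/Cauchy--Schwarz step is indeed the heart of the argument and matches the paper, but your account of how the finiteness of $m,n$ and the hypothesis $m,n\geq l+\sqrt{l}$ enter is not how the paper proceeds, and I think it reflects a misunderstanding of why that hypothesis is there.

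The paper's proof does not use any hitting or displacement estimate in this lemma — no Lemma \ref{lemma:traveldistance}, no Lemma \ref{lemma:hittinglongtime}, no Lemma \ref{lemma:hittinglongtime2}, and no comparison of finite walks to infinite ones via \eqref{eq:dmpaltsing}. It stays entirely with finite counting functions: writing $c_{m,n}(\zeta)$ for the number of two-sided $(m+n)$-step paths starting with $\zeta$, the conditional probability is expanded as a sum over $\zeta\succeq\xi$ of $\tfrac{c_{m,n}(\zeta)}{c_{m-l}c_{n-l}}\cdot\tfrac{\P(\eta^{m,n}[-l,l]=\zeta)}{\P(\eta^{m,n}[-k,k]=\xi)}$, converted to a sum of squares $\P(\eta^{m,n}[-l,l]=\zeta)^2$, and lower-bounded by Jensen to give exactly
\begin{equation}
\frac{c_{m+n}\,\P(\eta^{m,n}[-k,k]=\xi)}{c_{m-l}\,c_{n-l}\,c_{2l}\,\P(\eta^{l,l}[-k,k]=\xi)}.
\end{equation}
At this point the hypothesis $m,n\geq l+\sqrt{l}$ is used for one thing only: it ensures that $m-l\to\infty$ and $n-l\to\infty$ as $l\to\infty$, so that the Hara--Slade asymptotics $c_j\sim A\mu^j$ may be applied to all four counting factors, giving $c_{m+n}/(c_{m-l}c_{n-l}c_{2l})\to A^{-2}$. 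The remaining ratio $\P(\eta^{m,n}[-k,k]=\xi)/\P(\eta^{l,l}[-k,k]=\xi)\to 1$ by the uniform weak convergence in Theorem \ref{th:maintwosided}. Nothing about Brownian scaling or displacements of order $l^{1/4}$ is involved; any margin guaranteeing $m-l,n-l\to\infty$ (e.g.\ $m,n\geq l+\log l$) would serve equally well, and $\sqrt{l}$ is just a convenient choice.

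Your proposed second step — bounding the gap between the finite-$m,n$ escape event and an infinite-limit version using the tail-intersection estimates — is a genuinely different route, and it is not obviously easier to make rigorous: you would have to track simultaneously the conditioning on $\eta^{m,n}[-k,k]=\xi$, the three-piece gluing, and the replacement of both finite SAWs by infinite ones, which is considerably more bookkeeping than the single clean identity the paper uses. If you went that way you would also need to justify the appeal to \eqref{eq:dmpaltsing}, which is a statement about infinite SAW, inside a claim that is purely about finite walks. I would encourage you to re-derive the explicit combinatorial identity above before the Jensen step; once you have it, the hitting estimates drop out of the argument entirely.
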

\begin{proof}
We write $\zeta\succeq\xi$ if $\zeta$ is a two-sided path such that $\zeta[-k,k]=\xi$ and we write $c_{m,n}(\xi)$ for the number of two-sided paths $\zeta$ of length $m+n$ such that $\zeta\succeq\xi$. Then
\begin{equation}
    \begin{split}
        &\P(\eta_1^{m-l}[0,m-l]\oplus\eta^{m,n}[-l,l]\oplus\eta_2^{n-l}[0,n-l]\issaw\mid\eta^{m,n}[-k,k]=\xi)\\
        =&\sum_{\zeta\succeq\xi}\frac{c_{m,n}(\zeta)}{c_{m-l}c_{n-l}}\P(\eta^{m,n}[-l,l]=\zeta\mid\eta^{m,n}[-k,k]=\xi)\\
        =&\sum_{\zeta\succeq\xi}\frac{c_{m+n}\P(\eta^{m,n}[-l,l]=\zeta)}{c_{m-l}c_{n-l}}\cdot\frac{\P(\eta^{m,n}[-l,l]=\zeta)}{\P(\eta^{m,n}[-k,k]=\xi)}\\
        \geq&\frac{c_{m+n}}{c_{m-l}c_{n-l}\P(\eta^{m,n}[-k,k]=\xi)c_{l,l}(\xi)}\left(\sum_{\zeta\succeq\xi}\P(\eta^{m,n}[-l,l]=\zeta)\right)^2\\
        =&\frac{c_{m+n}\P(\eta^{m,n}[-k,k]=\xi)}{c_{m-l}c_{n-l}c_{2l}\P(\eta^{l,l}[-k,k]=\xi)}.
    \end{split}
\end{equation}
The inequality follows from Jensen. Note that since $m,n\geq l+\sqrt{l}$, we have that $m-l\rightarrow\infty$ and $n-l\rightarrow\infty$ as $l\rightarrow\infty$, so $\frac{c_{m+n}}{c_{m-l}c_{n-l}c_{2l}}\rightarrow A^{-2}$ as $l\rightarrow\infty$ by Theorem \ref{th:countingwalks}. Furthermore, $\frac{\P(\eta^{m,n}[-k,k]=\xi)}{\P(\eta^{l,l}[-k,k]=\xi)}\rightarrow1$ as $l\rightarrow\infty$ by Theorem \ref{th:maintwosided}, which completes the proof.
\end{proof}

We then prove a two-sided analogue of Lemma \ref{lemma:probcouple}. The proof is almost identical, so we only point out what needs to be altered.
\begin{lemma}\label{lemma:probcoupletwosided}
Let $\eta_1^{m,n}$ and  $\eta_2^{m,n}$ be two-sided SAWs conditioned such that $\eta_1^{m,n}[-k,k]=\zeta_1$ and $\eta_2^{m,n}[-k,k]=\zeta_2$, coupled as above. Then there exists $c>0$ and a sequence $(a_\l)_{\l\geq0}$ such that for all $m,n$ and all $\l\geq 2$ with $a_{\l}+\sqrt{a_{\l}}\leq m\wedge n$,
\begin{equation}
    \P(\text{coupling is successful at iteration $\l+1$}\mid\text{coupling is unsuccessful at iteration $\l$})>c.
\end{equation}
\end{lemma}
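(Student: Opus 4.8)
The plan is to replay the proof of Lemma~\ref{lemma:probcouple} essentially verbatim, substituting the two-sided domain Markov property noted in Section~\ref{sec:constructioncouplingtwosided} for Theorem~\ref{th:dmp} and Lemma~\ref{lemma:avoidingconditionedtwosided} for Lemma~\ref{lemma:avoidingconditioned}. First I would unpack the conditioning event $\mathcal G=\{\text{coupling unsuccessful at iteration }\l\}$ together with the values $\eta_i^{m,n}[-a_{\l-1},a_{\l-1}]=\zeta_i$. By construction, up to interchanging $\eta_1^{m,n}$ and $\eta_2^{m,n}$, the pieces $\eta_{-,\l},\eta_{+,\l}$ drawn at iteration $\l$ form a self-avoiding path when appended to $\zeta_1$ but not when appended to $\zeta_2$, so $\eta_2^{m,n}$ received an independently resampled pair of pieces. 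By the two-sided DMP, on $\mathcal G$ the walk $\eta_2^{m,n}[-a_\l,a_\l]$ is distributed exactly as a two-sided SAW conditioned to start with $\zeta_2$ (the resampling removes all residual conditioning), whereas $\eta_1^{m,n}[-a_\l,a_\l]$ is distributed as a two-sided SAW conditioned to start with $\zeta_1$ and, in addition, to have its full-length left and right pieces fail to escape $\zeta_2$. As in Lemma~\ref{lemma:probcouple} I would then bound the target conditional probability below by $\min_{\zeta_1,\zeta_2}\P(\eta_{-,\l+1},\eta_{+,\l+1}\text{ escape }\eta_1^{m,n}[-a_\l,a_\l]\text{ and }\eta_2^{m,n}[-a_\l,a_\l]\mid\mathcal G)$, the minimum over the finitely many two-sided paths $\zeta_1,\zeta_2$ of length $2a_{\l-1}$ making $\mathcal G$ possible.

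Next comes the localisation step. The extra conditioning on $\eta_1^{m,n}$ — ``the full pieces fail to escape $\zeta_2$'' — is an increasing union over $j$ of the events ``the pieces truncated to length $j-a_{\l-1}$ fail to escape $\zeta_2$''. Hence, conditionally on it, the probability that the failure is already witnessed within $[-\tilde a_\l,-a_{\l-1}]\cup[a_{\l-1},\tilde a_\l]$ tends to $1$ as $\tilde a_\l\to\infty$, uniformly over the admissible $\zeta_1,\zeta_2$. So I fix $\tilde a_\l=\tilde a_\l(a_{\l-1},\varepsilon)$ making this conditional probability at least $\tfrac12$ and pass to the strengthened event $\mathcal G'\subseteq\mathcal G$. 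On $\mathcal G'$ both walks are conditioned on only finitely many coordinates, the number being controlled by $\tilde a_\l$ and not by $m,n$: for each admissible value $\xi'$ of $\eta_1^{m,n}[-\tilde a_\l,\tilde a_\l]$, the walk $\eta_1^{m,n}[-a_\l,a_\l]$ is a two-sided SAW conditioned to start with $\xi'$, and $\eta_2^{m,n}[-a_\l,a_\l]$ is one conditioned to start with $\zeta_2$.

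Now apply Lemma~\ref{lemma:avoidingconditionedtwosided} to each walk with $l=a_\l$ and $k=\tilde a_\l$ (resp.\ $k=a_{\l-1}$): for $a_\l$ large enough depending on $\tilde a_\l$ and $\varepsilon$, and for all $m,n$ with $a_\l+\sqrt{a_\l}\le m\wedge n$, the fresh pieces $\eta_{-,\l+1},\eta_{+,\l+1}$ escape $\eta_i^{m,n}[-a_\l,a_\l]$ with conditional probability at least $(1-\varepsilon)A^{-2}$, for $i=1,2$, uniformly over $\xi'\in\SAW_{2\tilde a_\l}$ and over $\zeta_2$ (finite index sets). A union bound then yields escape of both with conditional probability at least $2(1-\varepsilon)A^{-2}-1$ on $\mathcal G'$, hence at least $\tfrac12\bigl(2(1-\varepsilon)A^{-2}-1\bigr)=(1-\varepsilon)A^{-2}-\tfrac12$ on $\mathcal G$. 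Taking $\varepsilon$ small and using that $A<\sqrt2$ for $d\ge5$ by \cite{hara92selfavoiding} (the analogue of the $A<2$ invoked in Lemma~\ref{lemma:probcouple}), this is a positive constant $c$. Finally I build $(a_\l)_{\l\ge0}$ recursively (say $a_0=k$ and $a_\l\ge e^\l$), choosing $a_\l$ after $\tilde a_\l$, which in turn is chosen after $a_{\l-1}$, so that all the ``large enough'' requirements are satisfied; since every threshold depends only on $\tilde a_\l$ and $\varepsilon$, the bound is uniform in $m,n$ for $\l\ge2$ with $a_\l+\sqrt{a_\l}\le m\wedge n$.

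I expect the main obstacle to be the careful bookkeeping of the conditional laws through the two-sided DMP — in particular pinning down that resampling $\eta_2^{m,n}$'s pieces genuinely erases all residual conditioning, and that both the localisation scale $\tilde a_\l$ and the escape scale $a_\l$ can be taken uniformly in $m,n$. A secondary but real point is the final positivity: since each two-sided escape costs a factor $A^{-2}$ rather than $A^{-1}$, the union bound only closes under $A<\sqrt2$, a slightly stronger lace-expansion input than in the one-sided case, so one should check the cited estimates on $A$ suffice for every $d\ge5$. The rest is a routine transcription of the proof of Lemma~\ref{lemma:probcouple}.
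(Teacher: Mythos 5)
Your proposal follows the paper's proof essentially verbatim: unpack the failure event into ``one walk keeps the drawn pieces, the other is resampled,'' use the two-sided DMP to identify the resulting conditional laws (resampled walk clean, retained walk carrying the extra ``pieces fail to escape $\zeta_2$'' conditioning), localise that extra conditioning to a window $[-\tilde a_\l,\tilde a_\l]$ at cost a factor $\tfrac12$, apply Lemma~\ref{lemma:avoidingconditionedtwosided} to each walk, and close with a union bound. That is exactly the structure the paper states (``the rest of the proof then proceeds as for Lemma~\ref{lemma:probcouple}, choosing $a_\l$ larger than $L(\varepsilon,\tilde a_\l)$''), and your bookkeeping of the conditional laws is sound.

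The one point you flag at the end deserves emphasis, because it is a genuine concern and the paper elides it. Transcribing the final step of Lemma~\ref{lemma:probcouple} replaces the one-sided escape estimate $\P(\text{escape}\,\eta_i)\geq(1-\varepsilon)A^{-1}$ by the two-sided one $\geq(1-\varepsilon)A^{-2}$, so the union bound plus the $\tfrac12$ localisation factor gives the lower bound $(1-\varepsilon)A^{-2}-\tfrac12$, which is positive only if $A<\sqrt{2}$. In the one-sided case the paper explicitly invokes $A<2$ from \cite{hara92selfavoiding}; for the two-sided case it says nothing, yet the same union-bound argument now needs the stronger $A<\sqrt{2}$. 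You are right to treat this as a potential obstacle rather than an established fact: either the numerical bound $A<\sqrt 2$ for all $d\geq 5$ needs to be sourced (it is not explicitly cited in the paper), or one must replace the crude union bound by a sharper argument --- e.g.\ showing that, conditionally on the drawn pieces, the two escape events (one for each of the independent walks $\eta_1^{m,n}$ and $\eta_2^{m,n}$) are positively correlated, which would give $\P(\text{escape both})\geq((1-\varepsilon)A^{-2})^2>0$ with no threshold on $A$. Neither the paper nor your proposal supplies such an argument, so this is the one gap both share; everything else in your write-up is a faithful and correct transcription.
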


\begin{proof}
If $\eta^{m,n}_1$ and $\eta^{m,n}_2$ are not successfully coupled at iteration $\l$, then it means that $\eta_{+,\l}$ at iteration $\l$ escaped $\eta_{-,\l}\oplus\eta_2^{m,n}[-a_{\l-1},a_{\l-1}]$ but not $\eta_{-,\l}\oplus\eta_1^{m,n}[-a_{\l-1},a_{\l-1}]$ or the other way around. 
Let $\tilde{\eta}^{m,n}_1$ and $\tilde{\eta}^{m,n}_2$ be two independent two-sided SAWs. Using the hitting estimates from Section \ref{sec:hitting}, it can be shown that there exists $\Tilde{a}_\l$ depending only on $a_{\l-1}$ such that
\begin{equation}
    \begin{split}
        &\min_{\xi_1,\xi_2}\P(\tilde{\eta}^{m,n}_1[a_{\l-1},\tilde{a}_\l]\nesc\tilde{\eta}^{m,n}_2[-\tilde{a}_\l,a_{\l-1}]\\
        &\mid\tilde{\eta}_1[-a_{\l-1},a_{\l-1}]=\xi_1,\,\tilde{\eta}_2[-a_{\l-1},a_{\l-1}]=\xi_2,\,\tilde{\eta}^{m,n}_1[a_{\l-1},n]\nesc\tilde{\eta}^{m,n}_2[-m,a_{\l-1}])\geq\frac{1}{2},
    \end{split}
\end{equation}
where the minimum is taken over all paths $\xi_1,\xi_2$ of length $2a_{\l-1}$ such that the conditioning event has positive probability. The rest of the proof then proceeds as for Lemma \ref{lemma:probcouple}, choosing $a_{\l}$ larger than $L(\varepsilon,\tilde{a}_\l)$ from Lemma \ref{lemma:avoidingconditionedtwosided} for $\varepsilon$ small enough.
\end{proof}

The following lemma is the analogue of Lemma \ref{lemma:probuncouple}. Like before, the proof follows from the hitting estimates in Section \ref{sec:hitting}. We leave the details to the reader.
\begin{lemma}\label{lemma:probuncoupletwosided}
Let $\eta_1^{m,n}$ and  $\eta_2^{m,n}$ be two-sided SAWs conditioned such that $\eta_1^{m,n}[-k,k]=\zeta_1$ and $\eta_2^{m,n}[-k,k]=\zeta_2$, coupled as above. Then there exists $C=C(\zeta_1,\zeta_2)$ and a sequence $(a_\l)_{\l\geq1}$ such that for all $m,n$ and all $\l\geq2$ with $a_\l\leq m\wedge n$,
\begin{equation}
    \P(
    \text{coupling is unsuccessful at iteration $\l+1$},\text{coupling is successful at iteration $\l$})\leq e^{-C\l}.
\end{equation}
\end{lemma}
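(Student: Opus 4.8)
The plan is to repeat the proof of Lemma~\ref{lemma:probuncouple} with the geometry adapted to two sides; the probabilistic content is unchanged, so I will indicate only the modifications and the one place where the bookkeeping is delicate. Throughout, $(a_\l)_\l$ is the free increasing sequence, whose growth rate I fix as I go, ultimately taking $a_\l\ge e^\l$ together with finitely many further lower thresholds, each depending only on $a_{\l-1}$; the constant $C$ is then chosen small, exactly as in Proposition~\ref{prop:coupling}.

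First I would record the structural consequence of a successful iteration. If the coupling is successful at iteration $\l$, then, because at every preceding successful iteration both walks were extended by the \emph{same} fresh pair, $\eta_1^{m,n}$ and $\eta_2^{m,n}$ agree on $[-a_\l,a_\l]\setminus(-a_{\l-1},a_{\l-1})$; in particular $\eta_1^{m,n}(\pm a_\l)=\eta_2^{m,n}(\pm a_\l)$. Write $M_\l:=\eta_1^{m,n}[-a_{\l-1},a_{\l-1}]\cup\eta_2^{m,n}[-a_{\l-1},a_{\l-1}]$, the set on which the two walks may still differ; it has at most $2(2a_{\l-1}+1)$ vertices, each within distance $a_{\l-1}$ of $0=\eta_i^{m,n}(0)$. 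The coupling turns unsuccessful at iteration $\l+1$ precisely when the fresh pair $(\eta_{-,\l},\eta_{+,\l})$ sampled at iteration $\l$, glued at the common points $\eta_i^{m,n}(\mp a_\l)$, extends exactly one of $\eta_1^{m,n}[-a_\l,a_\l]$, $\eta_2^{m,n}[-a_\l,a_\l]$ to a self-avoiding path, conditionally on extending at least one of them. Since these two central segments differ only on $[-a_{\l-1},a_{\l-1}]$ and are glued to the fresh walks at the same endpoints, ``extends exactly one'' forces $\eta_{+,\l}$ (translated to start at the origin) to meet $M_\l-\eta_i^{m,n}(a_\l)$ or $\eta_{-,\l}$ to meet $M_\l-\eta_i^{m,n}(-a_\l)$. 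I would therefore bound $\P(\text{unsuccessful at }\l+1,\ \text{successful at }\l)$ by the ratio of the probability of this hitting event to the probability that the fresh pair extends \emph{both} central segments.

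For the denominator I would invoke Lemma~\ref{lemma:avoidingconditionedtwosided}: once $a_\l$ exceeds a threshold depending only on $a_{\l-1}$, this probability is at least $(1-\varepsilon)A^{-2}>0$ uniformly in $m,n$ with $a_\l+\sqrt{a_\l}\le m\wedge n$; in the narrow boundary regime where one fresh walk is short I would instead bound it from below by the escape probability of the long fresh walk minus the (small, by the two-point bound) chance that the short one meets the centre, using the one-sided estimates of Section~\ref{sec:hitting} and Lemma~\ref{lemma:avoidingconditioned}. For the numerator, the two-sided domain Markov property rewrites the law of $\eta_i^{m,n}(\pm a_\l)$ as that of the endpoint of a one-sided SAW of length $m-k$ (resp.\ $n-k$) conditioned on a mutual-avoidance event whose probability is bounded below uniformly in $m,n$, so Lemma~\ref{lemma:traveldistance} gives $\P(\|\eta_i^{m,n}(\pm a_\l)\|<a_\l^{1/4})\to 0$ uniformly in $m,n\ge a_\l$, and I would choose $a_\l$ so this is at most $e^{-\l}$. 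Off that event the relevant translate of $M_\l$ lies in $\{\|y\|\ge a_\l^{1/4}-a_{\l-1}\}$, so the uniform two-point bound of Lemma~\ref{lemma:twopoint} and a union bound over the (at most four) pairs of (fresh walk, target set) give a bound $\lesssim a_{\l-1}(a_\l^{1/4}-a_{\l-1})^{2-d}+e^{-\l}$; since $d\ge5$, enlarging $a_\l$ once more — still as a function of $a_{\l-1}$ alone — makes this at most $e^{-\l}$.

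Combining, $\P(\text{unsuccessful at }\l+1,\ \text{successful at }\l)\lesssim e^{-\l}$ uniformly over $m,n$ with $a_\l\le m\wedge n$, and absorbing the implicit constant into the exponent yields the stated bound $e^{-C\l}$. I expect the main obstacle to be not the estimate itself but checking the uniformity claims: every ingredient — the displacement bound, the two-point bound, and the lower bound on the escape probability — must hold uniformly in the lengths $m,n$ and must survive the conditioning in force here, namely the fixed central pattern, the mutual avoidance of the three pieces of $\eta_i^{m,n}$, and the $\sigma$-algebra generated by the coupling up to iteration $\l$. This is precisely what the hitting estimates of Section~\ref{sec:hitting} were built to deliver, once the two-sided domain Markov property is used to rewrite each conditional law as an unconditioned one at a cost bounded away from $0$ and $\infty$.
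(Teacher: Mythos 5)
Your proposal is correct and follows the same route the paper takes: mirror the proof of Lemma~\ref{lemma:probuncouple}, identifying the uncoupling event with a fresh walk hitting the (small, displaced) region where the two central segments can still differ, bounding the hitting probability via Lemmas~\ref{lemma:traveldistance} and~\ref{lemma:twopoint}, and controlling the resampling via the escape lower bound (Lemma~\ref{lemma:avoidingconditionedtwosided} in place of Lemma~\ref{lemma:avoidingconditioned}). Your ``ratio'' bound is equivalent to the recursive inequality \eqref{eq:unionbounduncoupling} in the one-sided proof, and your explicit treatment of the boundary regime where a fresh walk is short is extra bookkeeping the paper leaves implicit but is consistent with its uniform-in-length hitting estimates.
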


\begin{proof}
Assume $\eta_1^{m,n}$ and $\eta_2^{m,n}$ are successfully coupled at iteration $\l$. Then, since $\eta_1^{m,n}$ and $\eta_2^{m,n}$ take the same steps on the intervals $[-a_{\l-1},-a_\l]$ and $[a_{\l-1},a_\l]$, in order for them to uncouple at the next iteration, $\eta_{+}^{n-a_\l}$ must have hit $\eta_{-}^{m-a_\l}\oplus\eta_1^{m,n}[-a_{\l-1},a_{\l-1}]$ and not hit $\eta_{-}^{m-a_\l}\oplus\eta^{m,n}_2[-a_{\l-1},a_{\l-1}]$ or the other way around, or $\esing_{0,\l}$ must have not escaped both $\eta_1[0,a_\l]$ and $\eta_2[0,a_{\l}]$ and was resampled. We can make the probability of the former events arbitrary small by choosing $a_\l$ large enough. In particular, the probability can be made smaller than $e^{-C\l}$.
\end{proof}

We conclude that the probability that the coupling is unsuccessful decreases exponentially.
\begin{proposition}\label{prop:couplingtwosided}
Let $\eta_1^{m,n}$ and  $\eta_2^{m,n}$ be two-sided SAWs conditioned such that $\eta_1^{m,n}[-k,k]=\zeta_1$ and $\eta_2^{m,n}[-k,k]=\zeta_2$, coupled as above. Then there exist $L=L(\zeta_1,\zeta_2)$, $C=C(\zeta_1,\zeta_2)$ and a sequence $(a_\l)_{\l\geq0}$ such that for all $m,n$ and all $\l\geq L$ with $a_\l+\sqrt{a_\l}\leq m\wedge n$,
\begin{equation}
    \P(\text{coupling is unsuccessful at iteration $\l$})\leq e^{-C\l}.
\end{equation}
\end{proposition}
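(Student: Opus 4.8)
The plan is to mirror the proof of Proposition \ref{prop:coupling} almost verbatim, now using the two-sided analogues just established. First I would choose the sequence $(a_\l)_{\l\ge0}$ to grow at least exponentially (say $a_\l\ge e^\l$) and simultaneously to dominate both the sequences produced by Lemma \ref{lemma:probcoupletwosided} and by Lemma \ref{lemma:probuncoupletwosided}; since each of those lemmas only requires $a_\l$ to be larger than some threshold depending on $a_{\l-1}$, such a sequence exists. The key point, which I would emphasise, is that all the thresholds and all the constants $c$, $C$ in those lemmas are \emph{uniform in $m$ and $n$}, so the final bound will be uniform as well, as long as $a_\l+\sqrt{a_\l}\le m\wedge n$, which is exactly the hypothesis of the proposition.

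The argument itself is an induction on $\l$. For the base case, observe that there exists $L_1=L_1(\zeta_1,\zeta_2)$ such that the coupling is successful at iteration $L_1$ with positive probability — this holds because at that iteration we are appending independent one-sided SAWs to two short two-sided paths, and by Lemma \ref{lemma:avoidtwoinf}-type estimates (or directly Lemma \ref{lemma:avoidingconditionedtwosided}) the escape probability is bounded below; choosing $L\ge L_1$ and $C$ small makes $e^{-CL}\ge1$ trivially true at $\l=L$. For the inductive step, I would decompose
\begin{equation}
\begin{split}
\P(\text{unsuccessful at }\l+1)=&\,\P(\text{unsuccessful at }\l+1,\ \text{successful at }\l)\\
&+\P(\text{unsuccessful at }\l+1\mid\text{unsuccessful at }\l)\,\P(\text{unsuccessful at }\l).
\end{split}
\end{equation}
By Lemma \ref{lemma:probuncoupletwosided} the first term is at most $e^{-C_1\l}$; by Lemma \ref{lemma:probcoupletwosided} the conditional probability in the second term is at most $1-c$; and by the inductive hypothesis $\P(\text{unsuccessful at }\l)\le e^{-C\l}$. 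Combining, the total is at most $e^{-C_1\l}+(1-c)e^{-C\l}$, and for $C$ chosen small enough (so that $C\le C_1$ and $e^{-C}>1-c+e^{-(C_1-C)\l_0}\cdots$, i.e.\ essentially $C$ below $\min(C_1,-\log(1-c))$ and $L$ large) this is bounded by $e^{-C(\l+1)}$, closing the induction.

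The only genuine subtlety — and the step I would flag as the main obstacle — is making sure the constraint $a_\l+\sqrt{a_\l}\le m\wedge n$ is respected consistently throughout: the two-sided lemmas are only valid in that regime, so the proposition's conclusion is correspondingly restricted, and one must check that for $\l$ in the stated range all invocations of Lemmas \ref{lemma:probcoupletwosided} and \ref{lemma:probuncoupletwosided} are legitimate. This is automatic from the hypothesis $a_\l+\sqrt{a_\l}\le m\wedge n$ together with monotonicity of $(a_\l)_\l$, so no real difficulty arises, but it is the one place where the finite-length setting differs from the infinite one. Everything else is a routine transcription of the proof of Proposition \ref{prop:coupling}, which is why the paper is justified in omitting the details.
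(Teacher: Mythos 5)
Your proposal matches the paper's intended proof: the paper itself disposes of this proposition in one line by saying it is identical to the proof of Proposition \ref{prop:coupling} with $(a_\l)_\l$ chosen as the maximum of the sequences from Lemmas \ref{lemma:probcoupletwosided} and \ref{lemma:probuncoupletwosided}, which is exactly what you do, including correctly flagging the constraint $a_\l+\sqrt{a_\l}\le m\wedge n$ as the only new wrinkle in passing from the infinite to the finite two-sided setting. One small slip in your base case: $e^{-CL}\geq 1$ is impossible for $C,L>0$; what you mean is that $\P(\text{unsuccessful at iteration }L)<1$ and $C$ can be chosen small enough (given $L$) so that $e^{-CL}$ exceeds this probability.
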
  

\begin{proof}
The proof is exactly the same as the proof of Proposition \ref{prop:coupling}, choosing the sequence $a_\l$ to be the maximum of the sequences from Lemmas \ref{lemma:probuncoupletwosided} and \ref{lemma:probcoupletwosided}.
\end{proof}

\begin{proof}[Proof of Proposition \ref{prop:ergodicitytwosided}]
Let $\eta_1^{m,n}$ and $\eta_2^{m,n}$ be two-sided SAWs conditioned such that~$\eta_1^{m,n}[-k,k]=\zeta_1$ and $\eta_2^{m,n}[-k,k]=\zeta_2$, coupled as above. Let $\l$ such that $i\in[a_{\l-1},a_{\l}]$. Then for all $i$ such that~$e^{-C\l}<\varepsilon$ and all $m,n\geq a_\l+\sqrt{a_\l}$, we have
\begin{equation}
    \begin{split}
        &\P(\eta_1^{m,n}[-i,-i-k]\neq\eta_2^{m,n}[-i,-i-k]\text{ or }\eta_1^{m,n}[i,i+k]\neq\eta_2^{m,n}[i,i+k])\\
        &\leq\P(\text{coupling is unsuccessful at iteration $\l$})\leq e^{-C\l}<\varepsilon,
    \end{split}
\end{equation}
which concludes the proof.
\end{proof}

\subsection{Convergence in probability}\label{sec:convergenceinprob}
\begin{proof}[Proof of Theorem \ref{th:mainpatterns}.(ii)]
Let $\eta_1^n,\eta_2^n$ be two independent SAWs. Let $\varepsilon>0$. Let $i_0=i_0(k,\varepsilon)$ as in Proposition \ref{prop:ergodicitytwosided}. Let $i_0\leq i,j\leq n$ such that $|i-j|\geq i_0$ and $j,n-j\geq m_0$. Let $\eta_1^{j,n-j},\eta_2^{j,n-j}$ be two-sided SAWs conditioned to start with $\xi_1,\xi_2$, coupled as in Section \ref{sec:constructioncouplingtwosided}. Then by Proposition~\ref{prop:ergodicitytwosided}, \begin{equation}
    \begin{split}
        &\left|\E\left[\1_{\{\eta_1^n[i,i+k]=\zeta\}}\mid\eta_1^n[j,j+k]=\xi_1\right]-\E\left[\1_{\{\eta_2^n[i,i+k]=\zeta\}}\mid\eta_2^n[j,j+k]=\xi_2\right]\right|\\
        =&\left|\E\left[(\1_{\{\eta_1^{j,n-j}[i-j,i-j+k]=\zeta\}}-\1_{\{\eta_2^{j,n-j}[i-j,i-j+k]=\zeta\}})\mid\eta_1^{j,n-j}[0,k]=\xi_1,\,\eta_2^{j,n-j}[0,k]=\xi_2\right]\right|<\varepsilon.
    \end{split}
\end{equation}
It follows that
\begin{equation}
    \begin{split}
        &\Cov(\1_{\{\eta^n[i,i+k]=\zeta\}},\1_{\{\eta^n[j,j+k]=\zeta\}})<\varepsilon.
    \end{split}
\end{equation}
Hence,
\begin{equation}
    \begin{split}
        \Var(\pi_\zeta(\eta^n,n))=&\frac{1}{n^2}\sum_{i,j=1}^n\Cov(\1_{\{\eta^n[i,i+k]=\zeta\}},\1_{\{\eta^n[j,j+k]=\zeta\}})\\
        \leq&\frac{1}{n^2}\sum_{i=i_0}^{n}\sum_{\substack{j=i_0\vee m_0\\|j-i|>i_0}}^{n-m_0}\Cov(\1_{\{\eta^n[i,i+k]=\zeta\}},\1_{\{\eta^n[j,j+k]=\zeta\}})+o(1)\\
        \leq&\varepsilon+o(1),\qquad n\rightarrow\infty.
    \end{split}
\end{equation}
Since $\varepsilon>0$ was chosen to be arbitrary, we conclude that 
\begin{equation}
    \limn\Var(\pi_\zeta(\eta^n,n))=0,
\end{equation}
which implies that $\pi_\zeta(\eta^n,n)$ converges in probability. 
\end{proof}

\section{Infinite SAW and the domain Markov property}\label{sec:conj}
In lower dimensions, the existence of one-sided infinite SAW has not been proved. Since proving that the weak limit of finite SAW exists is a difficult task, we propose an alternative definition of the one-sided infinite SAW in this section that does not directly involve taking a limit. In the following, we let $\eta$ denote any random variable taking values in $\SAW_n$ for some $n$ (including $n=\infty$) and let $d\geq1$.
 %There is no clear relation between the $n$-step SAW and the $n+1$-step SAW. Therefore, it is difficult to relate SAWs of different lengths and $\P(\eta^n[0,k]=\zeta)$ is not known to converge.

\begin{conjecture}\label{con:dmp}
For all $d\geq1$, there exists a unique probability measure $P$ on $\SAW_\infty$ such that $\P(\eta(1)=e)=\frac{1}{2d}$ for all neighbours $e$ of the origin and that satisfies the domain Markov property, i.e, for all events $E$,
\begin{equation}
    \begin{split}
        P(\eta[k,\infty)\in E\mid\eta[0,k]=\zeta')=P(\eta[0,\infty)\in E\mid\eta[1,\infty)\cap\zeta'=\varnothing,\,\eta(0)=\zeta(k)).
    \end{split}
\end{equation}
\end{conjecture}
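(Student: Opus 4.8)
We prove the statement for $d\ge5$, the standing hypothesis of the paper; in $d\le4$ we can only offer the heuristics sketched in the rest of this section.

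\textbf{Existence.} The plan is to check that $\mathrm{Law}(\esing)$ is such a measure. The group of lattice automorphisms fixing the origin acts transitively on the $2d$ neighbours of $0$ and preserves $\SAW_n$, so $c_n((0,e))=c_n/(2d)$ for every neighbour $e$; by Theorem \ref{th:countingwalks}.(iii), $\P(\esing(1)=e)=\lim_n c_n((0,e))/c_n=1/(2d)$. That $\esing$ satisfies the domain Markov property is precisely Theorem \ref{th:dmp}. (The same symmetry argument shows $\mathrm{Law}(\esing)$ is invariant under all lattice automorphisms fixing $0$, so it is symmetric in the stronger sense of the introduction.)

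\textbf{Uniqueness: reduction to the escape functional.} Let $P$ be any probability measure on $\SAW_\infty$ with $P(\eta(1)=e)=1/(2d)$ for every neighbour $e$ of $0$ and satisfying the DMP; write $P^x$ for its translate started at $x$. It suffices to show $P(\eta[0,k]=\zeta)=\P(\esing[0,k]=\zeta)$ for all $k$ and all $\zeta\in\SAW_k$. For a finite self-avoiding path $\gamma$ ending at $x$ set $E(\gamma):=P^x(\eta[1,\infty)\cap\gamma=\varnothing)$, the $P$-escape probability of $\gamma$, and $q(v):=E((0,v))=P^0(-v\notin\eta[1,\infty))$. Applying the DMP first to the prefix $\gamma\in\SAW_k$ and then once more to the single step that follows, and using the uniform first-step law and translation invariance, one computes $P(\eta(k+1)=\gamma(k)+v\mid\eta[0,k]=\gamma)=E(\gamma\oplus v)/(2d\,q(v)\,E(\gamma))$, hence $P(\eta[0,k+1]=\gamma\oplus v)=P(\eta[0,k]=\gamma)\,E(\gamma\oplus v)/(2d\,q(v)\,E(\gamma))$. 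Since $P(\eta[0,0]=(0))=E((0))=1$, induction on $k$ (the $E$-factors telescope; trapped paths, which have $E(\gamma)=0$, satisfy $P(\eta[0,|\gamma|]=\gamma)=0$ and are dealt with directly) gives the closed form
\[
 P(\eta[0,k]=\zeta)=E(\zeta)\prod_{j=1}^{k}\frac{1}{2d\,q(v_j)},\qquad v_j=\zeta(j)-\zeta(j-1).
\]
Thus the whole law of $P$ is determined by the escape functional $E$, which moreover satisfies the normalisation $\sum_{\zeta\in\SAW_k}E(\zeta)\prod_j(2d\,q(v_j))^{-1}=1$ and the first-step relation $E(\gamma)=\sum_{v:\,\gamma(k)+v\notin\gamma}E(\gamma\oplus v)/(2d\,q(v))$. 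The law of $\esing$ satisfies the same identities, with $q(v)\equiv\mu/(2d)$ by \eqref{eq:dmpaltsing}.

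\textbf{Uniqueness: identifying $q$ and $E$, and the main obstacle.} It remains to show $q(v)=\mu/(2d)$ for all $v$ and $E(\gamma)=\P(\esing\esc\gamma)$. Since $E\le1$, the normalisation gives $\sum_{\zeta\in\SAW_k}\prod_j(2d\,q(v_j))^{-1}\ge1$, which together with $c_k\ge\mu^k$ and $c_k^{1/k}\to\mu$ forces $2d\min_v q(v)\le\mu$. For the reverse bound one uses that, after reversing $\zeta$ (a bijection of $\SAW_k$), $c_k^{-1}\sum_{\zeta\in\SAW_k}E(\zeta)$ equals $P(\eta^P[1,\infty)\cap\eta^k[1,k]=\varnothing)$ for an independent uniform $\eta^k\in\SAW_k$; bounding this below uniformly in $k$ via the two-point estimates of Section \ref{sec:hitting} (Lemmas \ref{lemma:twopoint} and \ref{lemma:avoidtwoinf}), once the product formula is bootstrapped to control $P(x\in\eta^P[0,\infty))$, combined with the normalisation pins $2d\,q(v)=\mu$ for every $v$. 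With $q$ constant the first-step relation becomes $\mu E(\gamma)=\sum_{v\text{ allowed}}E(\gamma\oplus v)$: $E$ is a bounded positive $\mu$-harmonic function for the self-avoiding extension operator, normalised by $E((0))=1$, and the asymptotics $c_n\sim A\mu^n$ and $G(x)\sim a\|x\|^{2-d}$ of Theorem \ref{th:countingwalks} then force $E(\gamma)=\lim_m c_m(\gamma)/c_{m-|\gamma|}=\P(\esing\esc\gamma)$, the Green's-function decay making the relevant Martin boundary trivial so that the ground state is unique. Substituting back, $P(\eta[0,k]=\zeta)=\mu^{-k}\P(\esing\esc\zeta)=\P(\esing[0,k]=\zeta)$ by \eqref{eq:dmpaltsing}, so $P=\mathrm{Law}(\esing)$. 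The crux — and the only place where $d\ge5$ is used essentially — is exactly this last step: proving uniqueness of the escape functional, i.e.\ that $q$ must be constant and that the only normalised bounded positive $\mu$-harmonic function for the extension operator is $\P(\esing\esc\,\cdot\,)$. This relies on $c_n\sim A\mu^n$ and $G(x)\asymp\|x\|^{2-d}$ (Hara--Slade), neither of which is known for $d\le4$, which is precisely why the conjecture remains open there.
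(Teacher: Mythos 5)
There are two separate problems here.

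\textbf{The paper does not prove this statement.} What you are asked to prove is Conjecture \ref{con:dmp}, which the paper explicitly leaves open for \emph{all} $d\geq1$. Even for $d\geq5$, what the paper proves (Theorem \ref{th:conhighdim}) is Conjecture \ref{con:infalt}: uniqueness of $P$ subject to the relation $P(\eta[0,k]=\zeta)=\mu^{-k}P(\eta\esc\zeta)$, with the actual connective constant $\mu$ appearing as part of the hypothesis. The paper is explicit that Conjecture \ref{con:infalt} ``is not strictly stronger or weaker than Conjecture \ref{con:dmp}.'' Converting Conjecture \ref{con:dmp} into Conjecture \ref{con:infalt} requires showing that a symmetric DMP measure must have $\mu(P)=\mu$, and this step is precisely what the paper does not do. So there is no ``paper's own proof'' to compare against; your proposal attempts to establish more than the paper does.

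\textbf{The proposal has a genuine gap at exactly the step that matters.} Your existence argument and the reduction to the escape functional $E$ (telescoping the conditional one-step law to get $P(\eta[0,k]=\zeta)=E(\zeta)\prod_j(2d\,q(v_j))^{-1}$) are fine and essentially reproduce Proposition \ref{prop:dmpequiv}. The crux is the claim that $2d\,q(v)=\mu$ for all $v$ and that $E(\gamma)=\P(\esing\esc\gamma)$. Your argument for the first bound does give $2d\min_v q(v)\leq\mu$, but the reverse direction is deferred to a phrase: ``once the product formula is bootstrapped to control $P(x\in\eta^P[0,\infty))$.'' This bootstrap is never carried out, and it is nontrivial: all the hitting estimates in Section \ref{sec:hitting} are derived from the asymptotics $c_n\sim A\mu^n$ and $G(x)\sim a\|x\|^{2-d}$ for the \emph{uniform} SAW and do not automatically transfer to an arbitrary symmetric DMP measure $P$ until one already knows how $P$ relates to $\P$ — which is what you are trying to prove. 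Likewise, ``the Green's-function decay making the relevant Martin boundary trivial so that the ground state is unique'' invokes Martin-boundary intuition for a non-Markovian process with no theorem cited; this is a heuristic, not a proof. In short, the step where you would close Conjecture \ref{con:dmp} from Conjecture \ref{con:infalt} — pinning down $\mu(P)=\mu$ and the escape functional from symmetry alone — is exactly the gap that the paper itself flags as open, and your sketch does not fill it.

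A correct response here would either (a) prove Theorem \ref{th:conhighdim} instead (the paper's substitute, where $\mu(P)=\mu$ is granted), or (b) state explicitly that Conjecture \ref{con:dmp} remains open and identify the missing ingredient ($\mu(P)=\mu$ for symmetric DMP measures) as the obstruction, rather than asserting it via unproven lemmas.
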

A measure $P$ is called \emph{symmetric} if $P(\eta(1)=e)=\frac{1}{2d}$ for all neighbours $e$. We say a probability measure satisfies the \emph{symmetric domain Markov property} if it is symmetric and satisfies the DMP. Therefore, if Conjecture \ref{con:dmp} holds, we can simply define the one-sided infinite self-avoiding walk to be the unique random variable satisfying the symmetric domain Markov property. To prove that this random variable is indeed the weak limit of finite SAWs, it remains to show that every subsequential limit of finite SAWs satisfies the symmetric DMP. Of course, it is not clear that this approach is easier than a more direct proof that finite SAW converges weakly in low dimensions. However, even if it is not possible to prove that subsequential limits satisfy the symmetric DMP, a proof of Conjecture \ref{con:dmp} would still at least give a candidate weak limit of finite SAW. This is similar in spirit to the conjecture that $\text{SLE}_{\frac{8}{3}}$ is the scaling limit of SAW in two dimensions \cite{lawler04scaling}.

For $d\geq5$, this approach does in fact lead to a novel proof of existence of finite SAW. We give this proof in Section \ref{sec:conhighdim}, see Theorem \ref{th:conhighdim}. In contrast to Lawler's proof from \cite{lawler89infinite}, this proof does not directly use the lace expansion, but only relies on the DMP and on the hitting estimates from Section \ref{sec:hitting}. Furthermore, the methods from this section might also be used to prove existence of other high-dimensional infinite lattice models, such as the infinite lattice tree or the IIC for critical percolation.

In Section \ref{sec:equivdmp}, we give an equivalent characterization of the symmetric DMP and provide an alternative version of Conjecture~\ref{con:dmp}. In Section \ref{sec:conhighdim}, we prove the alternative version of Conjecture \ref{con:dmp} for $d\geq5$. In doing so, we give a new proof of existence of infinite SAW. In Section \ref{sec:finitedmpalt}, we give some evidence supporting Conjecture \ref{con:dmp} by proving a finite version of it.

\subsection{An alternative domain Markov property}\label{sec:equivdmp}
We can show the following equivalent characterizations of the symmetric DMP.
\begin{proposition}\label{prop:dmpequiv}
Let $P$ be a symmetric measure on $\SAW_\infty$. Let $e$ be any neighbour of the origin and let $\mu(P)=2dP(0\not\in\eta[0,\infty)\mid\eta(0)=e)$. The following are equivalent:
\begin{enumerate}[(i)]
	\item $P$ satisfies the DMP.\\
	\item Let $\zeta$ be a self-avoiding path of length $k$ and $x\sim\eta(k)$. Then
	\begin{equation}
		\begin{split}
			P(\eta(k+1)=x\mid\eta[0,k]=\zeta)=&\frac{P(\eta[1,\infty)\cap\zeta=\varnothing\mid\eta(0)=\zeta(k),\eta(1)=x)}{\sum_{y\sim\zeta(k)}P(\eta[1,\infty)\cap\zeta=\varnothing\mid\eta(0)=\zeta(k),\eta(1)=y)}.
		\end{split}
	\end{equation}
	\item For all self-avoiding paths $\zeta\in\SAW_k$ and events $E$,
	\begin{equation}
		\begin{split}
			P(\eta[0,k]=\zeta,\,(T^k\eta)[0,\infty)\in E)=\mu(P)^{-k}P(\eta\esc\zeta,\,\eta[0,\infty)\in E).
		\end{split}
	\end{equation}
\end{enumerate}
\end{proposition}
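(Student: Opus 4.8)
The plan is to establish the cycle of implications $(i) \Rightarrow (iii) \Rightarrow (ii) \Rightarrow (i)$, treating $\mu(P)$ throughout as the constant $2dP(0\notin\eta[0,\infty)\mid\eta(0)=e)$ and using the fact that $P$ is symmetric to make this well-defined and independent of the chosen neighbour $e$.

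For $(i)\Rightarrow(iii)$: starting from the DMP, I would mimic the proof of Theorem \ref{th:dmpalt} (the one-sided part) but reversed. The DMP says $P(\eta[k,\infty)\in E \mid \eta[0,k]=\zeta) = P(\eta[0,\infty)\in E \mid \eta[1,\infty)\cap\zeta=\varnothing,\,\eta(0)=\zeta(k))$. Multiplying by $P(\eta[0,k]=\zeta)$ and writing the right-hand conditional probability as a ratio, we get
\begin{equation}
    P(\eta[0,k]=\zeta,\,(T^k\eta)[0,\infty)\in E) = P(\eta[0,k]=\zeta)\cdot\frac{P(\eta[0,\infty)\in E,\,\eta\esc\zeta\mid\eta(0)=\zeta(k))}{P(\eta\esc\zeta\mid\eta(0)=\zeta(k))}.
\end{equation}
By symmetry $P(\eta(0)=\zeta(k))$-type normalizations cancel with the factor $\tfrac{1}{2d}$ at each step, so the task reduces to showing $P(\eta[0,k]=\zeta) = \mu(P)^{-k}P(\eta\esc\zeta)$ with no event $E$ (i.e.\ the analogue of \eqref{eq:dmpaltsing}), and then the general statement follows. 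That reduced identity I would prove by induction on $k$: peel off the last step of $\zeta$, apply the DMP once (equivalently, use $(i)$ in the form already rearranged), and use that the normalization appearing is exactly $2d$ times an escape probability, which by definition is $\mu(P)$. The base case $k=0$ is trivial and the inductive step is bookkeeping with the DMP.

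For $(iii)\Rightarrow(ii)$: apply $(iii)$ with $E=\SAW_\infty$ to get $P(\eta[0,k]=\zeta)=\mu(P)^{-k}P(\eta\esc\zeta)$ and also, for the path $\zeta$ extended by one step to $x$, $P(\eta[0,k+1]=\zeta\oplus x)=\mu(P)^{-(k+1)}P(\eta\esc(\zeta\oplus x))$. Dividing, $P(\eta(k+1)=x\mid\eta[0,k]=\zeta) = \mu(P)^{-1} \cdot P(\eta\esc(\zeta\oplus x))/P(\eta\esc\zeta)$. Now $P(\eta\esc(\zeta\oplus x))$, after translating so $\zeta(k)$ is the origin and using symmetry to condition on the first step being the one into $x$, is $2d\cdot P(\eta(0)=\zeta(k))^{-1}$-rescaled version of $P(\eta[1,\infty)\cap\zeta=\varnothing\mid\eta(0)=\zeta(k),\eta(1)=x)$ times $\tfrac{1}{2d}$; and summing the numerator over all neighbours $y\sim\zeta(k)$ recovers $P(\eta\esc\zeta)\cdot\mu(P)$ up to the same rescaling, since $\mu(P)$ is by definition $2d$ times an escape probability. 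Matching constants gives exactly formula $(ii)$. The care needed here is that $\zeta\oplus x$ is self-avoiding only when $x\notin\zeta$, but if $x\in\zeta$ both sides are zero, so that case is fine.

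For $(ii)\Rightarrow(i)$: formula $(ii)$ determines the one-step transition probabilities $P(\eta(k+1)=\cdot\mid\eta[0,k]=\zeta)$ entirely in terms of quantities that, after translation by $\zeta(k)$, depend only on $\zeta$ through the set $\zeta$ itself (not through which measure-theoretic copy we are in). Concretely, I would show by induction on the length $l$ of $\xi$ that $P(\eta[k,k+l]=\zeta(k)+\xi \mid \eta[0,k]=\zeta)$ equals the ratio $P(\eta[0,l]=\xi,\,\eta\esc\zeta-\zeta(k)\text{-compatibly})/P(\eta\esc\zeta)$ matching the RHS of the DMP; the inductive step is precisely one application of $(ii)$ at both "levels", and the normalizations again telescope. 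Passing from finite-length cylinder events to general events $E$ is a standard monotone-class / $\pi$-$\lambda$ argument on the cylinder $\sigma$-algebra.

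The main obstacle I anticipate is purely organizational rather than conceptual: keeping the constant $\mu(P)$ consistent across the three formulations. Unlike in Theorem \ref{th:dmpalt}, where $\mu$ is the genuine connective constant with known asymptotics, here $\mu(P)$ is \emph{defined} via $P$, so one must verify at each step that the normalization denominators (sums of escape probabilities over the $2d$ neighbours) really do collapse to $\mu(P)$ — this is true essentially by definition in $(iii)\Rightarrow(ii)$, but establishing $P(\eta[0,k]=\zeta)=\mu(P)^{-k}P(\eta\esc\zeta)$ inside $(i)\Rightarrow(iii)$ requires the induction to be set up so that the only normalization ever encountered is this one. A secondary, minor technical point is the extension from cylinder events to all events, which is routine but should be stated.
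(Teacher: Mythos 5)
Your plan is essentially correct and uses the same key computations as the paper, just organized as a different cycle. The paper proves $(i)\Leftrightarrow(ii)$ directly (two applications of Bayes plus symmetry), then uses $(i)$ together with $(ii)$ to obtain $(iii)$ by telescoping the one-step transitions $P(\eta(i)=\zeta(i)\mid\eta[0,i-1]=\zeta[0,i-1])=\mu(P)^{-1}P(\eta\esc\zeta[0,i])/P(\eta\esc\zeta[0,i-1])$, and finally observes $(iii)\Rightarrow(i)$ by the one-line division argument already used for Theorem \ref{th:dmp}. Your route $(i)\Rightarrow(iii)\Rightarrow(ii)\Rightarrow(i)$ does the same telescoping for $(i)\Rightarrow(iii)$, but to complete the inductive step you have to peel off the last step using the DMP twice — once at length $k$ and once more at the single edge $(\zeta(k),x)$ — in order to rewrite the conditional non-hitting probability as a pure escape probability and pull out the factor $\mu(P)=2dP(\zeta(k)\notin\eta[1,\infty)\mid\eta(0)=x)$. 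That computation is exactly the content of $(ii)$, so in practice you are proving $(i)\Rightarrow(ii)\Rightarrow(iii)$ even though you don't label the intermediate step. The one place I would push back slightly is your $(iii)\Rightarrow(ii)$: the ratio $\mu(P)^{-1}P(\eta\esc(\zeta\oplus x))/P(\eta\esc\zeta)$ involves escape probabilities conditioned only on $\eta(0)$, whereas the right-hand side of $(ii)$ conditions on the first two coordinates $\eta(0)=\zeta(k),\,\eta(1)=x$. Identifying the two requires exactly one application of the DMP to the edge $(\zeta(k),x)$, so the cleanest version of that step is to first note $(iii)\Rightarrow(i)$ (the trivial division) and then do $(i)\Rightarrow(ii)$ — which is what the paper does. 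Your remarks about keeping $\mu(P)$ consistent and extending from cylinder events to general $E$ via monotone class are both correct and worth stating; the paper is in fact slightly terse on the latter point in its $(ii)\Rightarrow(i)$ direction, where one must still upgrade one-step agreement to agreement on the whole tail $\sigma$-algebra, as you propose to do by induction on cylinder length.
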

\begin{proof}
Assume $P$ satisfies the symmetric DMP. We prove (ii) is true. By the DMP,
\begin{equation}
	\begin{split}
		&P(\eta(k+1)=x\mid\eta[0,k]=\zeta)=P(\eta(1)=x\mid\eta(0)=\zeta(k),\,\eta[1,\infty)\cap\zeta=\varnothing)\\
		=&\frac{P(\eta[1,\infty)\cap\zeta=\varnothing\mid\eta(0)=\zeta(k),\,\eta(1)=x)P(\eta(1)=x\mid\eta(0)=\zeta(k))}{P(\eta[1,\infty)\cap\zeta=\varnothing\mid\eta(0)=\zeta(k))}\\
		=&\frac{P(\eta[1,\infty)\cap\zeta=\varnothing\mid\eta(0)=\zeta(k),\,\eta(1)=x)}{2d\sum_{y\sim\zeta(k)}P(\eta[1,\infty)\cap\zeta=\varnothing\mid\eta(0)=\zeta(k),\eta(1)=y)P(\eta(1)=y\mid\eta(0)=\zeta(k))}\\
		=&\frac{P(\eta[1,\infty)\cap\zeta=\varnothing\mid\eta(0)=\zeta(k),\,\eta(1)=x)}{\sum_{y\sim\zeta(k)}P(\eta[1,\infty)\cap\zeta=\varnothing\mid\eta(0)=\zeta(k),\eta(1)=y)},
	\end{split}
\end{equation}
which is what we wanted to show. By the same calculation, if (ii) is true, then 
\begin{equation}
	\begin{split}
		P(\eta(k+1)=x\mid\eta[0,k]=\zeta)=&\frac{P(\eta[1,\infty)\cap\zeta=\varnothing\mid\eta(0)=\zeta(k),\,\eta(1)=x)}{\sum_{y\sim\zeta(k)}P(\eta[1,\infty)\cap\zeta=\varnothing\mid\eta(0)=\zeta(k),\eta(1)=y)}\\
		=&P(\eta(1)=x\mid\eta(0)=\zeta(k),\,\eta[1,\infty)\cap\zeta=\varnothing),
	\end{split}
\end{equation}
which implies that $P$ satisfies the DMP.

So (i) and (ii) are equivalent. Assume that (i) and (ii) are true. Then by the DMP,
\begin{equation}
	\begin{split}
		P(\eta(k+1)=x\mid\eta[0,k]=\zeta)=&\frac{P(\eta[1,\infty)\cap\zeta=\varnothing\mid\eta(0)=\zeta(k),\,\eta(1)=x)}{\sum_{y\sim\zeta(k)}P(\eta[1,\infty)\cap\zeta=\varnothing\mid\eta(0)=\zeta(k),\eta(1)=y)}\\
		=&\frac{P(\eta[0,\infty)\cap\zeta\mid\eta(0)=x,\,\eta[1,\infty)\cap\{\zeta(k),x\}=\varnothing)}{2dP(\eta\esc\zeta)}\\
		=&\frac{P(\eta\esc\zeta\cup\{x\})}{2dP(\zeta(k)\not\in\eta[1,\infty)\mid\eta(0)=x)P(\eta\esc\zeta)}\\
		=&\frac{P(\eta\esc\zeta\cup\{x\})}{\mu(P)P(\eta\esc\zeta)}.
	\end{split}
\end{equation}
Thus, for all self-avoiding paths $\xi$,
\begin{equation}
	\begin{split}
		&P(\eta[0,k]=\zeta,(T^k\eta)[0,m]=\xi)\\
		=&\prod_{i=1}^kP(\eta(i)=\zeta(i)\mid\eta[0,i-1]=\zeta[0,i-1])P((T^k\eta)[0,m]=\xi\mid\eta[0,k]=\zeta)\\
		=&\prod_{i=1}^k\mu(P)^{-1}\frac{P(\eta\esc\zeta[0,i])}{P(\eta\esc\zeta[0,i-1])}P(\eta[0,m]=\xi\mid\eta\esc\zeta)\\
		=&\mu(P)^{-k}P(\eta\esc\zeta)P(\eta[0,m]=\xi\mid\eta\esc\zeta)\\
		=&\mu(P)^{-k}P(\eta[0,m]=\xi,\,\eta\esc\zeta),
	\end{split}
\end{equation}
which implies (iii).

Finally, we saw in the proof of Theorem \ref{th:dmp} that (iii) implies (i), which concludes the proof.
\end{proof}

In light of the above, we define the operator $F$ on the space of probability measures on $\SAW_\infty$ given by
\begin{equation}
	\begin{split}
		(FP)(\eta(k+1)=x\mid\eta[0,k]=\zeta)=\frac{P(\eta[1,\infty)\cap\zeta=\varnothing\mid\eta(0)=\zeta(k),\eta(1)=x)}{\sum_{y\sim\zeta(k)}P(\eta[1,\infty)\cap\zeta=\varnothing\mid\eta(0)=\zeta(k),\eta(1)=x)}.
	\end{split}
\end{equation}
Then by Proposition \ref{prop:dmpequiv}.(ii), Conjecture \ref{con:dmp} is equivalent to the statement that $F$ has a unique fixed point in the space of symmetric probability measures on $\SAW_\infty$.

Furthermore, we can slightly alter Conjecture \ref{con:dmp} to the statement that there exists a unique probability measure $P$ satisfying the symmetric DMP and such that $\mu(P)=\mu$, with $\mu$ the connective constant.
\begin{conjecture}\label{con:infalt}
There exists a unique probability measure $P$ on infinite paths satisfying for all $k$ and all $\zeta\in\SAW_k$,
\begin{equation}
	P(\eta[0,k]=\zeta)=\mu^{-k}P(\eta\esc\zeta).
\end{equation}
\end{conjecture}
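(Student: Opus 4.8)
The statement to prove (under the standing hypothesis $d\geq5$) is that there is exactly one probability measure $P$ on $\SAW_\infty$ with $P(\eta[0,k]=\zeta)=\mu^{-k}P(\eta\esc\zeta)$ for all $k$ and all $\zeta\in\SAW_k$.

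\textbf{Existence} is immediate from what precedes: by Theorem~\ref{th:countingwalks} the infinite SAW $\esing$ exists, and by \eqref{eq:dmpaltsing} its law is such a $P$. To get a proof that does not invoke the Hara--Slade construction of $\esing$, I would instead build $P$ as a subsequential weak limit of finite SAW. For each $m$ the family $(\P(\eta^n[0,m]=\zeta))_{\zeta\in\SAW_m}$ lies in a compact set, so a diagonal extraction gives a subsequence $(n_j)$ along which all of these converge; the limits are Kolmogorov-consistent since $\P(\eta^n[0,m]=\zeta)=\sum_{\zeta'\succeq\zeta}\P(\eta^n[0,m+1]=\zeta')$ whenever $n\geq m+1$, hence define a measure $P$ on $\SAW_\infty$. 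That $P$ obeys the identity is exactly the computation in the proof of Theorem~\ref{th:dmpalt}: write $\P(\eta^n[0,k]=\zeta)=\tfrac{c_{n-k}}{c_n}\P(\eta^{n-k}\esc\zeta)$, send $n\to\infty$ along $(n_j)$, and use $c_n\sim A\mu^n$ together with Lemma~\ref{lemma:hittinglongtime} to pass the escape event to the limit.

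\textbf{Uniqueness.} I would show that \emph{every} solution $P$ coincides with the weak limit of finite SAW, which is $\esing$. The first step is to transfer the toolkit of Section~\ref{sec:hitting} to an arbitrary solution $P$. The identity defining $P$ is precisely the input \eqref{eq:dmpaltsing} used in the proof of Theorem~\ref{th:dmp}, so that proof applies verbatim and $P$ satisfies the domain Markov property (and $\mu(P)=\mu$). Re-running the proof of Lemma~\ref{lemma:twopoint}, which only counts self-avoiding paths through $x$ and uses $c_n\sim A\mu^n$ and $G(x)\sim a\|x\|^{2-d}$, gives $P(x\in\eta[0,\infty))\lesssim\|x\|^{2-d}$ with a constant uniform over all solutions. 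Since $\eta$ is self-avoiding, $\sum_N P(\|\eta(N)\|\leq R)=\sum_{x\in B(0,R)}P(x\in\eta[0,\infty))<\infty$, so $\|\eta(N)\|\to\infty$ in $P$-probability; this replaces the Brownian-motion input and yields the $P$-analogues of Lemmas~\ref{lemma:traveldistance}, \ref{lemma:hittinglongtime} and \ref{lemma:hittinglongtime2}. Finally, and this is the one place where $d\geq5$ is genuinely used, the escape probability of an independent $P$-walk past a length-$a$ $P$-walk is at least $A^{-1}-o(1)$: applying the identity for $P$ as in the proof of Lemma~\ref{lemma:avoidingconditioned} gives $P\otimes P(\eta_1\esc\eta_2[0,a])=\mu^a\sum_{\zeta\in\SAW_a}P(\eta[0,a]=\zeta)^2\geq\mu^a/c_a\to A^{-1}>0$ by Jensen, so the coupling of Section~\ref{sec:coupling} is available for $P$ (the same $A<2$ input as before making the per-iteration success probability positive).

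With this in hand, the identity for $P$ and \eqref{eq:dmpaltsing} for $\esing$ give, writing $S_\zeta:=(\zeta-\zeta(k))\setminus\{0\}\subseteq B(0,k)$,
\begin{equation*}
\mu^k\bigl(P(\eta[0,k]=\zeta)-\P(\esing[0,k]=\zeta)\bigr)=\P(\esing[1,\infty)\cap S_\zeta\neq\varnothing)-P(\eta[1,\infty)\cap S_\zeta\neq\varnothing),
\end{equation*}
so $P=\esing$ iff $P$ and $\esing$ assign equal probability to $\{\eta[1,\infty)\cap S=\varnothing\}$ for every finite $S\subseteq\Z^d\setminus\{0\}$, equivalently (by inclusion--exclusion) equal multi-point visit probabilities $P(\{x_1,\dots,x_r\}\subseteq\eta[1,\infty))$. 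The plan for this last step is to run the coupling of Section~\ref{sec:coupling} between a $P$-walk and an $\esing$-walk conditioned on the same pattern, feeding the two ``fresh'' walks (one $\sim P$, one $\sim\esing$) into the iteration through a maximal coupling so that they agree with probability $1-\rho_L$, where $\rho_L$ is the total-variation distance between the laws of the first $L$ steps under $P$ and under $\esing$, and using the transferred hitting estimates to control the tail contributions; this should yield an inequality of the form $\rho_k\leq o(1)+\gamma\sup_L\rho_L$ with $\gamma<1$, forcing $\rho_k=0$ for all $k$. I expect this contraction to be the main obstacle: because the escape events reference arbitrarily far-future coordinates, the naive soft estimates only produce the tautology $\delta\leq\delta$, and one presumably must use the sharp asymptotics $c_n\sim A\mu^n$ (with $A<2$) a second time, exactly as in the non-intersection bound above, to extract a genuine gain.
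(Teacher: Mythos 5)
Your existence argument is essentially the paper's, though you gloss over a point the paper flags explicitly: extracting a subsequence $(n_j)$ along which $\P(\eta^{n_j}\in\cdot)$ converges does \emph{not} immediately give convergence of $\P(\eta^{n_j-k}\esc\zeta)$, because $(n_j-k)$ need not be a subsequence of $(n_j)$. So ``pass the escape event to the limit'' is not automatic even with Lemma~\ref{lemma:hittinglongtime}; the paper resolves this by working with two arbitrary subsequential limits $P_1,P_2$ simultaneously and proving $P_1=P_2$.

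The real gap is in the uniqueness step, and you partly flag it yourself. You propose to transfer Lemma~\ref{lemma:avoidingconditioned} to $P$ and then run the Section~\ref{sec:coupling} coupling between a $P$-walk and an $\esing$-walk. But the $P$-analogue of Lemma~\ref{lemma:avoidingconditioned} is circular: rerunning the Jensen computation with $P$ gives
\[
P\otimes P(\eta_1\esc\eta_2[0,m]\mid\eta_2[0,k]=\xi)\;\geq\;\frac{\mu^m P(\eta[0,k]=\xi)}{c_m\P(\eta^m[0,k]=\xi)}\;\longrightarrow\;A^{-1}\,\frac{P(\eta[0,k]=\xi)}{\P(\esing[0,k]=\xi)},
\]
and the ratio on the right can be arbitrarily small unless you already know $P=\esing$. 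Your unconditioned Jensen bound $P\otimes P(\eta_1\esc\eta_2[0,a])\geq\mu^a/c_a\to A^{-1}$ is correct but does not feed into Lemma~\ref{lemma:probcouple}, which needs the \emph{conditional} escape probability bounded below uniformly over the initial pattern. The paper sidesteps this entirely by never coupling $P$-walks at all. It writes the defining identity iteratively as
\[
P(\eta[0,k]=\zeta)=\frac{c_n(\zeta)}{\mu^n}\,P(\eta\esc\eta^n\mid\eta^n[0,k]=\zeta),
\]
where $\eta^n$ is the \emph{uniform finite} SAW, observes that the unconditioned escape probability is \emph{exactly}
\[
P(\eta\esc\eta^n)=\frac1{c_n}\sum_{\xi\in\SAW_n}P(\eta\esc\xi)=\frac{\mu^n}{c_n}\sum_{\xi}P(\eta[0,n]=\xi)=\frac{\mu^n}{c_n}\longrightarrow A^{-1}
\]
(no Jensen, no loss), and then shows the conditioning on $\eta^n[0,k]$ is asymptotically irrelevant by applying the already-established coupling for the \emph{finite uniform} SAW (Proposition~\ref{prop:couplingfiniteonesided}), not for $P$. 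The only input required from $P$ itself is the hitting estimate, which follows from the cylinder bound $P(\eta[0,k]=\zeta)\leq\mu^{-k}\lesssim\P(\esing[0,k]=\zeta)$. This reformulation is precisely what buys the ``genuine gain'' you correctly suspect is needed; without it, the contraction $\rho_k\leq o(1)+\gamma\sup_L\rho_L$ you are hoping for does not come out of the soft estimates.
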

Note that the above is not strictly stronger or weaker than Conjecture \ref{con:dmp}. However, Conjecture~\ref{con:infalt} should still uniquely characterize the infinite SAW in the sense that if both the conjectures are true, then the unique measure from Conjecture \ref{con:dmp} is the same as the unique measure from \ref{con:infalt}. Furthermore, this unique measure should then also be the weak limit of finite SAW. In the next section, we prove Conjecture \ref{con:infalt} in dimensions $d\geq5$. 

%Recall from Theorem \ref{th:dmpalt} that infinite SAW does indeed satisfy the above in high dimensions. In Section \ref{sec:conhighdim} we show that the infinite SAW is indeed the unique random variable satisfying this. If this conjecture were true in low dimensions, then one could simply define the law of infinite one-sided SAW to be the unique measure $\P$ satisfying \eqref{eq:dmpaltsing}.

\subsection{Proof of Conjecture \ref{con:infalt} in high dimensions}\label{sec:conhighdim}
In this section, we show that Conjecture \ref{con:infalt} is true for $d\geq5$. We also show that the unique measure satisfying $\eqref{eq:dmpaltsing}$ is the weak limit of finite SAW. For the proof, we do not assume that infinite SAW exists. Thus, we give a new proof of existence of infinite SAW. The proof relies solely on the asymptotics for $c_n$ and the two-point function $G(x)$.

\begin{theorem}\label{th:conhighdim}
For $d\geq5$, Conjecture \ref{con:infalt} is true and the unique measure satisfying \eqref{eq:dmpaltsing} is the law of the one-sided infinite SAW.
\end{theorem}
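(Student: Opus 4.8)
The plan is to construct the measure $P$ of Conjecture \ref{con:infalt} \emph{directly}, as the weak limit of finite SAW --- which at the same time gives a lace-expansion-free proof that $\esing$ exists --- then to verify that $P$ satisfies \eqref{eq:dmpaltsing}, and finally to show that every solution of \eqref{eq:dmpaltsing} coincides with $P$. Throughout one uses only the asymptotics $c_n\sim A\mu^n$ (Theorem \ref{th:countingwalks}(i)) and the finite-length hitting estimates of Section \ref{sec:hitting}, never presupposing the existence of $\esing$. The whole argument is driven by a single decorrelation statement: write $\bar\zeta:=\zeta[0,k]-\zeta(k)$, let $\zeta\in\SAW_k$ be non-trapped, and let $\omega$ be independent of a uniform finite SAW $\eta^n$, where $\omega$ is either a finite SAW whose length tends to infinity or (later) an infinite $P'$-walk for a solution $P'$. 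Then
\begin{equation*}
\P\big(\omega\esc\eta^n \,\big|\, \eta^n[0,k]=\zeta\big)\longrightarrow A^{-1}\qquad(n\to\infty).
\end{equation*}
To prove this, note that the unconditioned probability is exactly computable: $\P(\omega\esc\eta^n)=c_{n+|\omega|}/(c_{|\omega|}c_n)\to A^{-1}$ in the finite case, while $\P(\omega\esc\eta^n)=c_n^{-1}\sum_{\nu\in\SAW_n}P'(\omega\esc\nu)=\mu^n/c_n\to A^{-1}$ when $\omega\sim P'$, using \eqref{eq:dmpaltsing} for $P'$ and $\sum_{\nu\in\SAW_n}P'(\omega[0,n]=\nu)=1$. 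Hence it suffices to show the conditional probability is asymptotically independent of $\zeta$. For this I would couple $\eta^n$ conditioned on $\eta^n[0,k]=\zeta'$ with $\eta^n$ conditioned on $\eta^n[0,k]=\zeta''$, for $\zeta',\zeta''\in\SAW_k$, so that the two realisations agree on $[k+i_0,n]$ with probability tending to $1$ as $i_0\to\infty$, uniformly in $n$; this is the one-sided finite analogue of the coupling of Sections \ref{sec:coupling}--\ref{sec:probabilityconv}, run using only the finite hitting estimates. On the coupling-success event the two walks differ only on the bounded set $\eta^n[0,k+i_0]$, which lies at distance $\gtrsim n^{1/4}$ from the common endpoint $\eta^n(n)$ (Lemma \ref{lemma:traveldistance}) and is therefore avoided by $\omega$, started at $\eta^n(n)$, with probability tending to $1$ (Lemma \ref{lemma:twopoint}); so $\1_{\{\omega\esc\eta^n\}}$ takes the same value for the two conditionings up to a vanishing error. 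Averaging the resulting common limit forces it to equal $A^{-1}$. (If $\zeta$ is trapped, $c_n(\zeta)=0$ for large $n$ and all identities below are trivial.)

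Granting this, existence is immediate. By the finite domain Markov property (Lemma \ref{lemma:dmpfin}), for $\zeta\in\SAW_k$ and $m,n\ge1$ one has $\P\big(\eta^m\esc\eta^n\mid\eta^n[0,k]=\zeta\big)=c_{n+m}(\zeta)/(c_m c_n(\zeta))$, so the displayed claim gives $c_{n+m}(\zeta)\sim A^{-1}c_m c_n(\zeta)\sim\mu^m c_n(\zeta)$; that is, $c_N(\zeta)\mu^{-N}$ is bounded and asymptotically constant, hence convergent (for trapped $\zeta$ it is eventually $0$). Since $\sum_{\zeta\in\SAW_k}c_N(\zeta)=c_N$ and $\SAW_k$ is finite, the limits $P(\eta[0,k]=\zeta):=\lim_N c_N(\zeta)/c_N$ form a consistent family summing to $1$, and define a measure $P$ on $\SAW_\infty$ which is, by construction, the weak limit of finite SAW --- in particular $\esing$ exists, with a proof relying only on Theorem \ref{th:countingwalks}(i) and Section \ref{sec:hitting}.

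That $P$ satisfies \eqref{eq:dmpaltsing}: writing $c_n(\zeta)=c_{n-k}\P(\eta^{n-k}\esc\zeta)$ and $c_{n-k}/c_n\to\mu^{-k}$ gives $P(\eta[0,k]=\zeta)=\mu^{-k}\limn\P(\eta^n\esc\zeta)$, and since $\{\omega\esc\zeta\}$ is approximated by the cylinder events $\{\omega[1,r]\cap\bar\zeta=\varnothing\}$ with an error uniform in the length of $\omega$ (Lemma \ref{lemma:hittinglongtime}), the weak convergence just established yields $\limn\P(\eta^n\esc\zeta)=P(\eta\esc\zeta)$, so \eqref{eq:dmpaltsing} holds (and then Theorem \ref{th:dmpalt}, Theorem \ref{th:dmp} and Proposition \ref{prop:dmpequiv} apply to $P$). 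For uniqueness, let $P'$ be any solution of \eqref{eq:dmpaltsing} and $\eta'\sim P'$. First transfer the Section \ref{sec:hitting} estimates to $P'$: for instance $\P(x\in\eta'[0,\infty))\le\sum_{\ell\ge0}\sum_{\xi\in\SAW_\ell(x)}\mu^{-\ell}P'(\eta'\esc\xi)\le\sum_\ell c_\ell(x)\mu^{-\ell}=G(x)$, and $\P(\|\eta'(n)\|\le n^{1/4})\le\mu^{-n}\#\{\zeta\in\SAW_n:\|\zeta(n)\|\le n^{1/4}\}=(c_n\mu^{-n})\,\P(\|\eta^n(n)\|\le n^{1/4})\to0$, the remaining estimates going through verbatim. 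Then, summing \eqref{eq:dmpaltsing} over the $c_n(\zeta)$ length-$n$ extensions of $\zeta$,
\begin{equation*}
P'(\eta'[0,k]=\zeta)=\mu^{-n}\!\!\sum_{\nu\succeq\zeta,\,|\nu|=n}\!\!P'(\eta'\esc\nu)=\frac{c_n}{\mu^n}\cdot\frac{c_n(\zeta)}{c_n}\cdot\P\big(\eta'\esc\eta^n\,\big|\,\eta^n[0,k]=\zeta\big),
\end{equation*}
and the $P'$-version of the decorrelation claim makes the last factor tend to $A^{-1}$, so $P'(\eta'[0,k]=\zeta)=\limn(c_n\mu^{-n})(c_n(\zeta)/c_n)A^{-1}=P(\eta[0,k]=\zeta)$. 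Hence $P'=P$, which proves Conjecture \ref{con:infalt}, and the unique solution is the (weak-limit) one-sided infinite SAW.

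The main obstacle is the decorrelation claim. Although it is morally contained in the couplings of Sections \ref{sec:coupling}--\ref{sec:probabilityconv}, it has to be reassembled in a slightly different geometry --- one walk conditioned on its \emph{start}, a second walk attached at the far \emph{end} --- and one must quantify precisely that conditioning on a bounded neighbourhood of the origin has asymptotically no effect on whether a walk attached far away escapes, and also take care that nothing in the coupling secretly uses the existence of $\esing$ (so that some ingredients may need re-deriving from $c_n\sim A\mu^n$ directly). This, together with the summability of the two-point function underlying all the hitting estimates, is exactly where $d\ge5$ enters; everything else is bookkeeping with the domain Markov property and the counting asymptotics.
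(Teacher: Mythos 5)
Your overall strategy — a counting-asymptotics/Jensen argument combined with a coupling to show that conditioning on a bounded initial segment has vanishing effect on the probability that a walk attached at the far end escapes — is the right one, and it is the paper's strategy. The uniqueness half of your argument (once existence is granted) is sound: the transfer of the hitting estimates to $P'$ and the averaging identity $\sum_{\nu\in\SAW_n}P'(\eta\esc\nu)=\mu^n$ do exactly what you want, and the $P'$-coupling then has access to the already-established convergence of $\P(\eta^n[0,\cdot]=\cdot)$. Your observation that $c_{n+m}(\zeta)/(c_m c_n(\zeta))\to A^{-1}$ forces $c_N(\zeta)\mu^{-N}$ to converge is also correct and is a clean reformulation of the limit.

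However, the existence half has a genuine circularity that your sketch does not address. The coupling you propose (two copies of $\eta^n$ conditioned on different initial segments of length $k$, made to agree beyond time $k+i_0$) requires, at iteration $\l$, a lower bound \emph{uniform in $n$ and $\l$} on the probability that a freshly sampled $\eta^{n-a_\l}$ escapes $\eta^n[0,a_\l]$ given $\eta^n[0,a_{\l-1}]=\xi$. The only tool available for this is the Jensen/Cauchy--Schwarz computation of Lemma~\ref{lemma:avoidingconditioned}, and in the finite one-sided setting it produces the lower bound
\begin{equation*}
\P\bigl(\eta^{n-a_\l}\esc\eta^n[0,a_\l]\,\big|\,\eta^n[0,a_{\l-1}]=\xi\bigr)\;\geq\;\frac{c_n}{c_{a_\l}\,c_{n-a_\l}}\cdot\frac{\P(\eta^{n}[0,a_{\l-1}]=\xi)}{\P(\eta^{a_\l}[0,a_{\l-1}]=\xi)}.
\end{equation*}
The first factor tends to $A^{-1}$ by the counting asymptotics, but the second is precisely the ratio whose convergence to $1$ is what you are trying to prove. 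Moreover the argument genuinely needs a bound close to $A^{-1}$: the coupling-success bound in Lemma~\ref{lemma:probcouple} has the form $A^{-1}(1-\varepsilon)-\tfrac12$, so a generic positive constant in place of $A^{-1}$ (which is all one can extract from Kesten's pattern theorem) would not do. Thus the coupling's key ingredient cannot be ``re-derived from $c_n\sim A\mu^n$ directly''; it structurally presupposes what you want.

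The paper resolves this by a step that your proposal is missing: it uses compactness of $\SAW_\infty$ to extract a convergent subsequence $(\beta_n)$ with limit $P_2$, and then constrains the coupling scales $(a_\l)$ to lie inside $(\beta_n)$, so that both $\P(\eta^{\beta_n}[0,a_{\l-1}]=\xi)$ and $\P(\eta^{a_\l}[0,a_{\l-1}]=\xi)$ converge to the same quantity $P_2(\eta[0,a_{\l-1}]=\xi)$ and the ratio tends to $1$ (Lemma~\ref{lemma:avoidingconditioned}'s analogue, stated along subsequences). This yields the decorrelation along $(\beta_n)$, and existence is then concluded by showing any two subsequential limits $P_1,P_2$ coincide — not by proving convergence of the full sequence directly. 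That manoeuvre is not bookkeeping; it is the one idea that breaks the circularity, and it is absent from your sketch. Your closing caveat (``take care that nothing in the coupling secretly uses the existence of $\esing$'') names the danger but underestimates it: the issue is not a hidden reference to $\esing$ but that the Jensen bound has no non-circular finite substitute until a convergent subsequence has been fixed.
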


We first prove that finite SAW converges weakly. Note that the space $\SAW_\infty$ is compact in the weak topology, so the space of probability measures on $\SAW_\infty$ in the Borel sigma-algebra generated by the weak topology is also compact in the topology of weak convergence. So every subsequence of $(\eta^n)_{n\in\N}$ has a weakly convergent further subsequence. It suffices to show that all these subsequences converge to the same limit.

Let $(\alpha_n)_{n\in\N}$ and $(\beta_n)_{n\in\N}$ be increasing sequences in $\N$ and let $P_1,P_2$ be probability measures on $\SAW_\infty$ such that the laws of $(\eta^{\alpha_n})_{n\in\N}$ and $(\eta^{\beta_n})_{n\in\N}$ converge weakly to $P_1$ and $P_2$ respectively. The goal is to show that $P_1=P_2$.

Recall that $\eta^n$ refers to the $n$-step SAW. As usual, we let $\P$ denote the law of $\eta^n$. In the same way as in the proof of Theorem \ref{th:dmpalt}, we have that
\begin{equation}
	\begin{split}
		P_1(\eta[0,k]=\zeta)=&\limm\P(\eta^{\alpha_m}[0,k]=\zeta)\\
		=&\limm\frac{c_{\alpha_m-k}}{c_{\alpha_m}}\P(\eta^{\alpha_m-k}\esc\zeta)\\
		=&\mu^{-k}\lim_{m\rightarrow\infty}\P(\eta^{\alpha_m-k}\esc\zeta).
	\end{split}
\end{equation}
Note that we cannot conclude from the convergence of $\eta^{\alpha_m})_m$ that $(\eta^{\alpha_m-k})_{m}$ converges, so we cannot assume that the last probability equals $P_1(\eta\esc\zeta)$. However, the above formula will still be useful.

We denote $\xi\succeq\zeta$ if $\xi$ is a $\beta_n$-step self-avoiding path starting with $\zeta$. Then, for all $\beta_n$,
\begin{equation}
	\begin{split}
		P_1(\eta[0,k]=\zeta)=&\sum_{\xi\succeq\zeta}P_1(\eta[0,\beta_n]=\xi)\\
		=&\sum_{\xi\succeq\zeta}\mu^{-\beta_n}P_1(\eta\esc\xi)\\
		=&\frac{c_{\beta_n}(\zeta)}{\mu^{\beta_n}}\sum_{\xi\succeq\zeta}\frac{1}{c_{\beta_n}\P(\eta^{\beta_n}[0,k]=\zeta)}\lim_{m\rightarrow\infty}\P(\eta^{\alpha_m-\beta_n}\esc\xi)\\
		=&\frac{c_{\beta_n}(\zeta)}{\mu^{\beta_n}}\sum_{\xi\succeq\zeta}\P(\eta^{\beta_n}[0,\beta_n]=\xi\mid\eta^{\beta_n}[0,k]=\zeta)\lim_{m\rightarrow\infty}\P(\eta^{\alpha_m-\beta_n}\esc\xi)\\
		=&\frac{c_{\beta_n}(\zeta)}{\mu^{\beta_n}}\lim_{m\rightarrow\infty}\P(\eta^{\alpha_m-\beta_n}\esc\eta^{\beta_n}\mid\eta^{\beta_n}[0,k]=\zeta)\\
		=&\P(\eta^{\beta_n}[0,k]=\zeta)\frac{c_{\beta_n}}{\mu^{\beta_n}}\lim_{m\rightarrow\infty}\P(\eta^{\alpha_m-\beta_n}\esc\eta^{\beta_n}\mid\eta^{\beta_n}[0,k]=\zeta).
	\end{split}
\end{equation}
Note that $\limm\P(\eta^{\alpha_m-\beta_n}\esc\eta^{\beta_n})=\frac{\mu^{\beta_n}}{c_{\beta_n}}$ and as $n\rightarrow\infty$, $\P(\eta^{\beta_n}[0,k]=\zeta)$ tends to $P_2(\eta[0,k]=\zeta)$. So the goal is to show that 
\begin{equation}\label{eq:forgetbeginning}
	\lim_{m\rightarrow\infty}\P(\eta^{\alpha_m-\beta_n}\esc\eta^{\beta_n}\mid\eta^{\beta_n}[0,k]=\zeta)\sim \limm\P(\eta^{\alpha_m-\beta_n}\esc\eta^{\beta_n}),\qquad n\rightarrow\infty.
\end{equation}
So in a sense, we need to show that $\eta^{\beta_n}$ `forgets' its initial conditioning, which is reminiscent of our earlier decorrelation results. Indeed, this expression will follow using the exact same coupling technique as before, but without assuming existence of the infinite SAW. We give details on how to change the arguments from Sections \ref{sec:coupling} and \ref{sec:probabilityconv}. Some parts of the proof are entirely analogous to the earlier proofs, and we leave those details to the reader.

We have for all $0\leq t\leq \beta_n$,
\begin{equation}
	\begin{split}
		&\limm\P(\eta^{\alpha_m-\beta_n}\esc\eta^{\beta_n}[0,\beta_n]\mid\eta^{\beta_n}[0,k]=\zeta)\\
		=&1-\limm\P(\eta^{\alpha_m-\beta_n}[1,\alpha_m-\beta_n]\cap\eta^{\beta_n}[t,\beta_n]\neq\varnothing\mid\eta^{\alpha_m-\beta_n}(0)=\eta^{\beta_n}(\beta_n),\,\eta^{\beta_n}[0,k]=\zeta)\\
		&-\limm\P(\eta^{\alpha_m-\beta_n}[1,\alpha_m-\beta_n]\cap\eta^{\beta_n}[0,t)\neq\varnothing,\,\eta^{\alpha_m-\beta_n}[1,\alpha_m-\beta_n]\cap\eta^{\beta_n}[t,\beta_n]=\varnothing\\
		&\mid\eta^{\alpha_m-\beta_n}(0)=\eta^{\beta_n}(\beta_n),\eta^{\beta_n}[0,k]=\zeta).
	\end{split}
\end{equation}
We first show that the last term tends to 0 as $\beta_n\rightarrow\infty$. The hitting estimates from Section \ref{sec:hitting} are uniform in the length of the walk, which means that they also hold for any subsequential limit $\alpha_m-\beta_n\rightarrow\infty$. Hence, for fixed $t$, by Lemmas \ref{lemma:traveldistance} and \ref{lemma:twopoint},
\begin{equation}
	\begin{split}
		&\limm\P(\eta^{\alpha_m-\beta_n}[1,\alpha_m-\beta_n]\cap\eta^{\beta_n}[0,t)\neq\varnothing,\,\eta[1,\alpha_m-\beta_n]\cap\eta^{\beta_n}[t,\beta_n]=\varnothing\\
		&\mid\eta^{\alpha_m-\beta_n}(0)=\eta^{\beta_n}(\beta_n),\eta^{\beta_n}[0,k]=\zeta)\\
		&\limm\P(\eta^{\alpha_m-\beta_n}[1,\alpha_m-\beta_n]\cap\eta^{\beta_n}[0,t)\neq\varnothing\mid\eta^{\alpha_m-\beta_n}(0)=\eta^{\beta_n}(\beta_n),\eta^{\beta_n}[0,k]=\zeta)\\
		\leq&\limm\P(\eta^{\alpha_m-\beta_n}[1,\alpha_m-\beta_n]\cap\eta^{\beta_n}[0,t]\neq\varnothing,\,\|\eta^{\beta_n}(\beta_n)\|>\beta_n^{1/4}\mid\eta(0)=\eta^{\beta_n}(\beta_n),\eta^{\beta_n}[0,k]=\zeta)\\
		&+\P(\|\eta^{\beta_n}(\beta_n)\|\leq\beta_n^{1/4}\mid\eta^{\beta_n}[0,k]=\zeta)\rightarrow0
	\end{split}
\end{equation}
as $\beta_n\rightarrow\infty$.

It remains to show that for all $\varepsilon>0$, there exists $t_0=t_0(\zeta,\varepsilon)$ such that for all $t\geq t_0$ and $\beta_n\geq 2t$,
\begin{equation}\label{eq:escapedecorrelate}
	\begin{split}
		&\limm\P(\eta^{\alpha_m-\beta_n}[1,\alpha_m-\beta_n]\cap\eta^{\beta_n}[t,\beta_n]\neq\varnothing\mid\eta^{\alpha_m-\beta_n}(0)=\eta^{\beta_n}(\beta_n),\,\eta^{\beta_n}[0,k]=\zeta)\\
		=&\limm\P(\eta^{\alpha_m-\beta_n}[1,\alpha_m-\beta_n]\cap\eta^{\beta_n}[t,\beta_n]\neq\varnothing\mid\eta^{\alpha_m-\beta_n}(0)=\eta^{\beta_n}(\beta_n))\pm\varepsilon.
	\end{split}
\end{equation}
Then \eqref{eq:forgetbeginning} follows, which completes the proof.

The required decorrelation is a direct consequence of the following proposition.
\begin{proposition}\label{prop:couplingfiniteonesided}
Let $\zeta_1,\zeta_2\in\SAW_k$ and let $\eta^{\beta_n}_1,\eta_2^{\beta_n}$ be two $\beta_n$-step SAWs conditioned to start with $\zeta_1$ and $\zeta_2$ respectively. Then there exists a coupling of $\eta^{\beta_n}_1$ and $\eta^{\beta_n}_2$ such that 
\begin{equation}
\lim_{t\rightarrow\infty}\sup_{\beta_n\geq 2m}\P(T^t\eta^{\beta_n}_1\neq T^t\eta^{\beta_n}_2)=0.
\end{equation}
\end{proposition}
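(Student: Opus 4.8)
The plan is to rerun the bit-by-bit coupling of Section~\ref{sec:coupling}, now for two \emph{finite} one-sided walks, and to check that every estimate invoked there can be made uniform in the length $\beta_n$. Fix an increasing sequence $(a_\l)_{\l\geq0}$ with $a_0=k$, to be specified. Suppose $\eta_1^{\beta_n}[0,a_\l]$ and $\eta_2^{\beta_n}[0,a_\l]$ have been constructed; if $a_\l\geq\beta_n$ we stop, otherwise we sample an independent $(\beta_n-a_\l)$-step SAW $\eta_{0,\l}$ from $0$ and proceed as follows: if $\eta_{0,\l}$ escapes both $\eta_1^{\beta_n}[0,a_\l]$ and $\eta_2^{\beta_n}[0,a_\l]$, append its first $a_{\l+1}-a_\l$ steps to both; if it escapes exactly one of them, append it to that walk and sample the continuation of the other independently among continuations escaping it; if it escapes neither, resample $\eta_{0,\l}$. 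Exactly as in Section~\ref{sec:constructioncoupling}, an induction based on the finite domain Markov property (Lemma~\ref{lemma:dmpfin}) shows that at every stage $\eta_1^{\beta_n}$ and $\eta_2^{\beta_n}$ are distributed as $\beta_n$-step SAWs conditioned to start with $\zeta_1$ and $\zeta_2$.

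Next I would prove the two length-uniform analogues of Lemmas~\ref{lemma:probcouple} and~\ref{lemma:probuncouple}. The first needs a finite-length version of Lemma~\ref{lemma:avoidingconditioned}: for every $\xi\in\SAW_k$,
\begin{equation}
\liminf_{M\to\infty}\ \inf_{N\geq M}\ \P\big(\eta^{N}_1\esc\eta^{M}_2[0,M]\ \big|\ \eta^{M}_2[0,k]=\xi\big)\ \geq\ A^{-1},
\end{equation}
where $\eta^{N}_1,\eta^{M}_2$ are independent SAWs and the case $N=\infty$ is allowed; this follows from the Jensen computation of Lemma~\ref{lemma:avoidingconditioned} with every quantity kept finite, using $c_{M+N}/(c_Mc_N)\to A^{-1}$ and $c_M\sim A\mu^M$. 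Granting this, the proof of Lemma~\ref{lemma:probcouple} applies verbatim: conditioning on the coupling failing at iteration~$\l$, the domain Markov property re-expresses the conditioning on $\eta_i^{\beta_n}[0,a_\l]$ as a conditioning on only the first $\tilde a_\l$ steps (for some $\tilde a_\l$ depending on $a_{\l-1}$ alone) up to an event of probability at least $\tfrac12$; then, since $\beta_n-a_\l\geq a_\l$ for every iteration up to the one straddling $t$ (this is exactly where the hypothesis $\beta_n\geq 2t$ enters), the fresh walk $\eta_{0,\l+1}$ escapes both $\eta_1^{\beta_n}[0,a_\l]$ and $\eta_2^{\beta_n}[0,a_\l]$ with probability at least $A^{-1}(1-\varepsilon)-\tfrac12$, which is positive because $A<2$~\cite{hara92selfavoiding}. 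The point is that $\tilde a_\l$ and the threshold $a_\l$ must clear are chosen through the length-uniform estimates of Section~\ref{sec:hitting} and the displayed inequality, hence do not depend on $\beta_n$.

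For the staying-coupled estimate, if the walks are successfully coupled at iteration~$\l$ they agree on the block $[a_{\l-1},a_\l]$ and differ only inside $[0,a_{\l-1}]$, so uncoupling at the next iteration forces $\eta_{0,\l}$ to hit exactly one of $\eta_i^{\beta_n}[0,a_{\l-1}]-\eta_i^{\beta_n}(a_\l)$, $i\in\{1,2\}$ (or to escape neither walk and be resampled); by Lemmas~\ref{lemma:traveldistance} and~\ref{lemma:twopoint}, which are uniform in the walk length, and a union bound, this has probability $\lesssim e^{-\l}$ once $a_\l\geq e^\l$ and $a_\l$ is large enough relative to $a_{\l-1}$, again uniformly in $\beta_n$. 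Combining the two bounds precisely as in the proof of Proposition~\ref{prop:coupling} gives constants $C,L$ and a sequence $(a_\l)$ with $\P(\text{coupling unsuccessful at iteration }\l)\leq e^{-C\l}$ for all $\l\geq L$ with $a_\l<\beta_n$, uniformly in $\beta_n$. Finally, for $t\in[a_{\l-1},a_\l]$, on the event that all iterations $\geq\l$ are successful the increments of $\eta_1^{\beta_n}$ and $\eta_2^{\beta_n}$ agree on $[t,\beta_n]$, whence
\begin{equation}
\P\big(T^t\eta_1^{\beta_n}\neq T^t\eta_2^{\beta_n}\big)\ \leq\ \sum_{j\geq\l}\P(\text{coupling unsuccessful at iteration }j)\ \leq\ \sum_{j\geq\l}e^{-Cj},
\end{equation}
and the right-hand side tends to $0$ as $t\to\infty$ (so $\l\to\infty$), uniformly over $\beta_n\geq 2t$; this is the assertion.

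The main obstacle is not any individual estimate but the \textbf{uniformity in $\beta_n$}: one must arrange that the auxiliary lengths $\tilde a_\l$ and the growth thresholds for $(a_\l)$ are fixed \emph{before} $\beta_n$ is chosen, and that the various conditional probabilities do not deteriorate as $\beta_n\to\infty$. This is supplied by the length-uniform hitting estimates of Section~\ref{sec:hitting} and the length-uniform form of Lemma~\ref{lemma:avoidingconditioned}; the one delicate point is that conditioning on the non-coupling event distorts the laws of $\eta_1^{\beta_n}$ and $\eta_2^{\beta_n}$, which, as in Lemma~\ref{lemma:probcouple}, is handled by first using the finite domain Markov property to transfer that conditioning onto a bounded-length initial segment and only then invoking the uniform bounds.
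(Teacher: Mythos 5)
Your coupling construction and the overall skeleton (length-uniform analogues of Lemmas~\ref{lemma:probcouple} and~\ref{lemma:probuncouple}, then Borel--Cantelli as in Proposition~\ref{prop:coupling}) match the paper's approach, and the "staying-coupled" half of your argument is fine. The gap is in your proposed finite-length replacement for Lemma~\ref{lemma:avoidingconditioned}. Recall that Proposition~\ref{prop:couplingfiniteonesided} sits inside Section~\ref{sec:conhighdim}, whose entire point is to reprove existence of the infinite SAW \emph{without} assuming it; thus nothing here may use convergence of $\P(\eta^n[0,k]=\xi)$ as $n\to\infty$, nor anything that amounts to it. Running the Jensen calculation of Lemma~\ref{lemma:avoidingconditioned} ``with every quantity kept finite'' gives, for the version actually needed in the coupling (escape of a length-$m$ \emph{prefix} of a length-$n$ walk, not of a full length-$M$ walk),
\begin{equation}
\P\big(\eta^{n-m}\esc\eta^{n}[0,m]\,\big|\,\eta^{n}[0,k]=\xi\big)\ \geq\ \frac{c_{n}}{c_{n-m}\,c_{m}}\cdot\frac{\P(\eta^{n}[0,k]=\xi)}{\P(\eta^{m}[0,k]=\xi)},
\end{equation}
and while $c_{n}/(c_{n-m}c_{m})\to A^{-1}$, the second factor only tends to $1$ if both prefix probabilities converge to a common limit. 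Without the existence of infinite SAW the sequence $\P(\eta^{n}[0,k]=\xi)$ is not known to converge, so you cannot control this ratio over arbitrary $m,n$. Your mention of ``the case $N=\infty$'' likewise invokes the object whose existence is under proof. This is exactly why the paper restricts the lemma to $m\in(\beta_n)_n$ and $\beta_n\geq 2m$: along that subsequence both $\P(\eta^{\beta_n}[0,k]=\xi)$ and $\P(\eta^{m}[0,k]=\xi)$ tend to the \emph{same} $P_2(\eta[0,k]=\xi)$, so the ratio tends to $1$, and consequently the sequence $(a_\l)_\l$ in the coupling must itself be chosen as a subsequence of $(\beta_n)_n$ --- a constraint absent from your argument. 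As stated, your lemma cannot be proved without circularity, and the coupling bound in your ``probability of coupling successfully'' step does not go through.

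A secondary, smaller mismatch: the event you need is escape of the first $a_\l$ steps of the (conditioned) length-$\beta_n$ walk, whose law is \emph{not} that of a length-$a_\l$ SAW conditioned to start with $\xi$ (the two differ by the tilting $c_{\beta_n}(\zeta)/c_{\beta_n}$ versus $1/c_{a_\l}$). Your displayed lemma concerns escape of a full walk of length $M$, so it is not quite the object you then use; the paper's version is phrased directly for the prefix.
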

We first prove \eqref{eq:escapedecorrelate} under the assumption that Proposition \ref{prop:couplingfiniteonesided} is true. Let $\varepsilon>0$. Then there exists $t_0$ such that for all $t\geq t_0$ and $\beta_n\geq 2t$,
\begin{equation}
	\begin{split}
		&\limm\P(\eta^{\alpha_m-\beta_n}[1,\alpha_m-\beta_n]\cap\eta^{\beta_n}[t,\beta_n]\neq\varnothing\mid\eta^{\alpha_m-\beta_n}(0)=\eta^{\beta_n}(\beta_n),\,\eta^{\beta_n}[0,k]=\zeta)\\
		=&\limm\P(\eta^{\alpha_m-\beta_n}[1,\alpha_m-\beta_n]\cap\eta^{\beta_n}[t,\beta_n]\neq\varnothing\mid\eta^{\alpha_m-\beta_n}(0)=\eta^{\beta_n}(\beta_n))\\
		=&\limm\P(\eta^{\alpha_m-\beta_n}[1,\alpha_m-\beta_n]\cap\eta^{\beta_n}[0,\beta_n]\neq\varnothing\mid\eta^{\alpha_m-\beta_n}(0)=\eta^{\beta_n}(\beta_n))\\
		&-\limm\P(\eta^{\alpha_m-\beta_n}[1,\alpha_m-\beta_n]\cap\eta^{\beta_n}[0,t]\neq\varnothing\mid\eta^{\alpha_m-\beta_n}(0)=\eta^{\beta_n}(\beta_n)\pm\varepsilon\\
		=&\limm\P(\eta^{\alpha_m-\beta_n}[1,\alpha_m-\beta_n]\cap\eta^{\beta_n}[0,\beta_n]\neq\varnothing\mid\eta^{\alpha_m-\beta_n}(0)=\eta^{\beta_n}(\beta_n))\pm2\varepsilon,
	\end{split}
\end{equation}
which was what we wanted to show.

\subsubsection{Proof of Proposition \ref{prop:couplingfiniteonesided}}
Consider a one-sided version of the finite two-sided coupling from Section \ref{sec:probabilityconv} or, equivalently, a finite version of the infinite coupling from Section \ref{sec:coupling}. So for some sequence $a_\l$, assuming that $\en_1$ and $\en_2$ are coupled, we sample an independent $n-a_\l$-step SAW $\eta^{n-a_\l}$. As before, if $\eta^{n-a_\l}$ escapes both $\en_1[0,a_\l]$ and $\en_2[0,a_\l]$, we let $(T^{a_\l}\en_1)[a_{\l+1},a_{\l}]=\eta^{n-a_\l}[0,a_{\l+1}-a_\l]=(T^{a_\l}\en_2)[a_{\l+1},a_{\l}]$. The cases where $\eta^{n-a_\l}$ escapes only one or none of the paths are similar to before.

The proof of Proposition \ref{prop:couplingfiniteonesided} using this coupling is very similar as in Section \ref{sec:ergodicityproof}. In Section \ref{sec:ergodicityproof}, the only place where the existence of infinite SAW is used is in the proof of Lemma \ref{lemma:avoidingconditioned}. It turns out that if we let $m$ be a subsequence of $(\beta_n)_n$, then a similar result holds using the convergence of the law of $(\eta^{\beta_n})_n$ to $P_2$. As a consequence, we must choose the $a_\l$ to be a subsequence of $(\beta_n)_n$. The rest of the proof then goes through as before.

\begin{lemma}
For all $\xi\in\SAW_k$,
\begin{equation}
	\begin{split}
		\liminf_{\substack{m\rightarrow\infty\\ m\in(\beta_n)_n}}\inf_{\beta_n\geq2m}\P(\eta^{\beta_n-m}\esc\eta^{\beta_n}[0,m]\mid\eta^{\beta_n}[0,k]=\xi)\geq A^{-1}
	\end{split}
\end{equation}
\end{lemma}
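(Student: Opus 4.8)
The plan is to mirror the proof of Lemma~\ref{lemma:avoidingconditioned} essentially line for line, replacing the infinite SAW by the subsequential limit $P_2$ and keeping careful track of the fact that we only control walks whose length lies in $(\beta_n)_n$. Write $\zeta\succeq\xi$ when $\zeta\in\SAW_m$ is a path starting with $\xi$, and recall $c_m(\xi)$ is the number of such $\zeta$. Fix $m$ and $\beta_n$ in $(\beta_n)_n$ with $\beta_n\geq 2m$, and let $\eta^{\beta_n-m}$ be independent of $\eta^{\beta_n}$. Since a $(\beta_n-m)$-step SAW escapes a path $\zeta\in\SAW_m$ precisely when its concatenation with $\zeta$ is a $\beta_n$-step path starting with $\zeta$, we have $\P(\eta^{\beta_n-m}\esc\zeta)=c_{\beta_n}(\zeta)/c_{\beta_n-m}=c_{\beta_n}\P(\eta^{\beta_n}[0,m]=\zeta)/c_{\beta_n-m}$. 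Conditioning on the value of $\eta^{\beta_n}[0,m]$ and summing over $\zeta\succeq\xi$ yields
\begin{equation*}
\P(\eta^{\beta_n-m}\esc\eta^{\beta_n}[0,m]\mid\eta^{\beta_n}[0,k]=\xi)=\frac{c_{\beta_n}}{c_{\beta_n-m}\,\P(\eta^{\beta_n}[0,k]=\xi)}\sum_{\zeta\succeq\xi}\P(\eta^{\beta_n}[0,m]=\zeta)^2 .
\end{equation*}

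Next I would apply Cauchy--Schwarz exactly as in Lemma~\ref{lemma:avoidingconditioned}: since there are $c_m(\xi)$ paths $\zeta\succeq\xi$ and $\sum_{\zeta\succeq\xi}\P(\eta^{\beta_n}[0,m]=\zeta)=\P(\eta^{\beta_n}[0,k]=\xi)$, we get $\sum_{\zeta\succeq\xi}\P(\eta^{\beta_n}[0,m]=\zeta)^2\geq c_m(\xi)^{-1}\P(\eta^{\beta_n}[0,k]=\xi)^2$. Plugging this in and using $c_m(\xi)=c_m\,\P(\eta^m[0,k]=\xi)$ gives
\begin{equation*}
\P(\eta^{\beta_n-m}\esc\eta^{\beta_n}[0,m]\mid\eta^{\beta_n}[0,k]=\xi)\geq\frac{c_{\beta_n}}{c_{\beta_n-m}\,c_m}\cdot\frac{\P(\eta^{\beta_n}[0,k]=\xi)}{\P(\eta^m[0,k]=\xi)} .
\end{equation*}
It then remains to take $\liminf_{m\to\infty,\,m\in(\beta_n)_n}\inf_{\beta_n\geq 2m}$ of the right-hand side. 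The constraint $\beta_n\geq 2m$ forces $\beta_n-m\geq m$, so all three lengths $\beta_n,\beta_n-m,m$ are at least $m$; hence by $c_j\sim A\mu^j$ from Theorem~\ref{th:countingwalks}, the ratio $c_{\beta_n}/(c_{\beta_n-m}c_m)$ tends to $A^{-1}$ uniformly over $\beta_n\geq 2m$ as $m\to\infty$. For the second factor, $\P(\eta^{\beta_n}[0,k]=\xi)\to P_2(\eta[0,k]=\xi)$ as $\beta_n\to\infty$ in $(\beta_n)_n$, and $\P(\eta^m[0,k]=\xi)\to P_2(\eta[0,k]=\xi)$ as $m\to\infty$ in $(\beta_n)_n$ — this is exactly the point where one needs $m$ to lie in the subsequence, so that the denominator converges — so the ratio tends to $1$ uniformly over $\beta_n\geq 2m$. (If $P_2(\eta[0,k]=\xi)=0$ there is nothing to prove; for a proper internal pattern this case is excluded by Kesten's Theorem~\ref{th:kestenpattern}, and otherwise the conditioning is eventually degenerate.) Since the $\liminf$ of a product of nonnegative quantities is at least the product of the $\liminf$s, the bound $A^{-1}$ follows.

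The only genuine difference from Lemma~\ref{lemma:avoidingconditioned}, and the one step requiring a little care, is the uniformity in $\beta_n\geq 2m$ of the two limits while having no control over $\eta^j$ for $j\notin(\beta_n)_n$: the counting ratio is handled by noting its error term is controlled by the smallest index $m$, and the probability ratio is handled precisely by restricting $m$ to $(\beta_n)_n$ so that $\P(\eta^m[0,k]=\xi)$ converges (and then to $\mathbb{R}$, this convergence is uniform over $\beta_n\geq 2m$ since for $\beta_n$ large the numerator is also close to the common limit). Everything else is identical to the infinite-SAW argument, so I would leave those routine details to the reader, as the surrounding text does.
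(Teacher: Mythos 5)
Your proof is correct and follows the same route as the paper's: decompose the escape probability by conditioning on $\eta^{\beta_n}[0,m]$, use the finite identity $\P(\eta^{\beta_n-m}\esc\zeta)=c_{\beta_n}\P(\eta^{\beta_n}[0,m]=\zeta)/c_{\beta_n-m}$, apply Cauchy--Schwarz (the paper phrases it as Jensen, but it is the same bound), and then pass to the limit using $c_j\sim A\mu^j$ together with convergence of both $\P(\eta^{\beta_n}[0,k]=\xi)$ and $\P(\eta^m[0,k]=\xi)$ to $P_2(\eta[0,k]=\xi)$ along $(\beta_n)_n$. Your remarks about uniformity in $\beta_n\geq 2m$ and the reason for restricting $m$ to the subsequence correctly identify the one place where care is needed beyond Lemma~\ref{lemma:avoidingconditioned}.
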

\begin{proof}
The proof is similar to the proof of Lemma \ref{lemma:avoidingconditioned}. Recall the notation $\zeta\succeq\xi$ if $\zeta$ is a self-avoiding path of length $m\geq k$ that starts with $\xi$. Then, by \eqref{eq:dmpaltsing},
\begin{equation}
	\begin{split}
		&\liminf_{\substack{m\rightarrow\infty\\ m\in(\beta_n)_n}}\inf_{\beta_n\geq2m}\P(\eta^{\beta_n-m}\esc\eta^n[0,m]\mid\eta^{\beta_n}[0,k]=\xi)\geq A^{-1}\\
		=&\liminf_{\substack{m\rightarrow\infty\\ m\in(\beta_n)_n}}\inf_{\beta_n\geq2m}\sum_{\zeta\succeq\xi}\P(\eta^{\beta_n-m}\esc\zeta)\P(\eta^{\beta_n}[0,m]=\zeta)\P(\eta^{\beta_n}[0,k]=\xi)^{-1}\\
		=&\liminf_{\substack{m\rightarrow\infty\\ m\in(\beta_n)_n}}\inf_{\beta_n\geq2m}\P(\eta^{\beta_n}[0,k]=\xi)^{-1}\sum_{\zeta\succeq\xi}\P(\eta^{\beta_n}[0,m]=\zeta)^2\frac{c_{\beta_n}}{c_{\beta_n-m}}.
	\end{split}
\end{equation}
By applying Jensen's inequality to the last sum, we obtain that the above is bounded from below by
\begin{equation}
	\begin{split}
		&\liminf_{\substack{m\rightarrow\infty\\ m\in(\beta_n)_n}}\inf_{\beta_n\geq2m}\P(\eta^{\beta_n}[0,k]=\xi)^{-1}\frac{c_{\beta_n}}{c_{\beta_n-m}}c_m(\xi)\left(\sum_{\zeta\succeq\xi}\P(\eta^{\beta_n}[0,m]=\xi)c_m(\xi)^{-1}\right)^2\\
		=&\liminf_{\substack{m\rightarrow\infty\\ m\in(\beta_n)_n}}\inf_{\beta_n\geq2m}\frac{\P(\eta^{\beta_n}[0,k]=\xi)c_{\beta_n}}{c_m(\xi)c_{\beta_n-m}}\\
		=&\liminf_{\substack{m\rightarrow\infty\\ m\in(\beta_n)_n}}\inf_{\beta_n\geq2m}\frac{\P(\eta^{\beta_n}[0,k]=\xi)c_{\beta_n}}{\P(\eta^m[0,k]=\xi)c_mc_{\beta_n-m}}\\
		=&\liminf_{\substack{m\rightarrow\infty\\ m\in(\beta_n)_n}}\inf_{\beta_n\geq2m}\frac{\P(\eta^{\beta_n}[0,k]=\xi)A\mu^{\beta_n}}{\P(\eta^m[0,k]=\xi)A^2\mu^m\mu^{\beta_n-m}}=A^{-1}.
	\end{split}
\end{equation}
In the last line, we use that the laws of $(\eta^{\beta_n})_{n\in\N}$ and $(\eta^m)_{m\in(\beta_n)_n}$ both converge to $P_2$ and we use that $\beta_n\geq2m$, so $\beta_n-m\rightarrow\infty$.
\end{proof}
Thus, if we choose the sequence $(a_\l)_{\l\in\N}$ to grow sufficiently fast and to be a subsequence of~$(\beta_n)_{n\in\N}$, then the probability of successfully coupling at each iteration is bounded from below. The proof of this fact follows in the exact same way as the proof of Lemma \ref{lemma:probcouple}. Furthermore, the proof that the probability of staying successfully coupled is exponentially large for $(a_\l)_{\l\in\N}$ relies only on the hitting estimates of Section \ref{sec:hitting} and does not use existence of infinite SAW. So the proof is entirely analogous to Section \ref{sec:stayingcoupled}. The conclusion of the proof of Proposition \ref{prop:couplingfiniteonesided} then follows as in Section~\ref{sec:proofoferg}.

\subsubsection{Proof of Conjecture \ref{con:infalt}}
In the preceding, we proved existence of infinite SAW. It remains to show that the law of infinite SAW is the unique symmetric law satisfying \eqref{eq:dmpaltsing}. The proof is very similar to the existence proof and we omit many details.

Let $P$ be a symmetric probability measure on $\SAW_\infty$ satisfying \eqref{eq:dmpaltsing}. Then like before,
\begin{equation}
	\begin{split}
		P(\eta[0,k]=\zeta)=&\frac{c_n(\zeta)}{\mu^n}P(\eta\esc\eta^n\mid\eta^n[0,k]=\zeta)\\
		\sim&\P(\esing[0,k]=\zeta)AP(\eta\esc\eta^n\mid\eta^n[0,k]=\zeta),\qquad n\rightarrow\infty.
	\end{split}
\end{equation}
So we would like to show that the last probability tends to $A^{-1}$. Again, for all $m\leq n$,
\begin{equation}
    \begin{split}
        &P(\eta\esc\eta^n\mid\eta^n[0,k]=\zeta)\\
        =&1-P(\eta[1,\infty)\cap\eta^n[m,n]\neq\varnothing\mid\eta(0)=\eta^n(n),\,\eta^n[0,k]=\zeta)\\
        &-P(\eta[1,\infty)\cap\eta^n[0,m)\neq\varnothing,\,\eta[1,\infty)\cap\eta^n[m,n]=\varnothing\mid\esing(0)=\eta^n(n),\,\eta^n[0,k]=\zeta)
    \end{split}
\end{equation}
Note that $P(\eta\in\cdot)\leq A\P(\esing\in\cdot)$, which implies that the hitting estimates of Section \ref{sec:hitting} also hold for $P$. Hence, the last term tends to 0 as $n\rightarrow\infty$ for fixed $m$. Furthermore, by the coupling for $\eta^n$, we have $P(\eta[1,\infty)\cap\eta^n[m,n]\neq\varnothing\mid\eta(0)=\eta^n(n),\,\eta^n[0,k]=\zeta)\sim P(\eta[1,\infty)\cap\eta^n[0,n]\neq\varnothing\mid\eta(0)=\eta^n(n))$. So
\begin{equation}
	\begin{split}
		P(\eta\esc\eta^n\mid\eta^n[0,k]=\zeta)\sim&P(\eta\esc\eta^n)\\
		=&c_n^{-1}\sum_{\zeta\in\SAW_n}\P(\eta\esc\zeta)=c_n^{-1}\mu^n\sim A^{-1},\qquad n\rightarrow\infty.
	\end{split}
\end{equation}
which is what we wanted to show.

\subsection{Proof of Conjecture \ref{con:infalt} in the finite case}\label{sec:finitedmpalt}
We can prove the following version of Conjecture \ref{con:infalt} for finite-step walks.
\begin{proposition}\label{prop:dmpaltfinite}
There exists a unique probability measure $P_n$ on $\SAW_n$ satisfying
\begin{equation}\label{eq:dmpaltfinite}
    P_n(\eta=\zeta)=\frac{1}{Z}P_n(\eta\esc\zeta),
\end{equation}
with $Z$ a normalising constant.
\end{proposition}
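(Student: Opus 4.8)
The plan is to turn \eqref{eq:dmpaltfinite} into an eigenvalue problem for a nonnegative $0$--$1$ matrix and read off the measure from Perron--Frobenius theory, with the combinatorics of bridges supplying irreducibility. Index everything by $\SAW_n$ and let $M$ be the matrix $M_{\zeta,\omega}=\mathbf 1[\,\zeta\oplus\omega\text{ is a self-avoiding path}\,]$, i.e. $M_{\zeta,\omega}=1$ exactly when $\omega\esc\zeta$. A probability vector $p=(P_n(\eta=\zeta))_\zeta$ satisfies \eqref{eq:dmpaltfinite} with constant $Z$ iff $Mp=Zp$; so the proposition says precisely that $M$ has a unique probability-vector eigenvector, and that its eigenvalue is positive. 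The $\zeta$-th row of $M$ vanishes exactly when $\zeta$ has no self-avoiding extension to length $2n$; writing $V$ for the set of $\zeta$ that do extend, any eigenvector with positive eigenvalue is supported on $V$, so it suffices to analyse $\tilde M:=M|_{V\times V}$. Finally $\rho(M)\ge 1>0$, since the straight segment $\ell$ in the $e_1$-direction escapes itself ($M_{\ell,\ell}=1$).

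The core of the argument is the structure of the directed graph $\mathcal G$ on $\SAW_n$ with an edge $\zeta\to\omega$ whenever $M_{\zeta,\omega}=1$. Two elementary facts are used: (i) the concatenation of two $n$-step bridges in the $e_1$-direction is again a self-avoiding (bridge) path of length $2n$, because the second bridge lies strictly beyond the hyperplane through the tip of the first; hence the set $B$ of $n$-step $e_1$-bridges is a complete subdigraph of $\mathcal G$ (all loops included); and (ii) reversal is a symmetry, since $\zeta\oplus\omega$ is self-avoiding iff $(R\omega)\oplus(R\zeta)$ is, where $R$ reverses a path, and likewise for the axis reflection $\sigma$. The key input (the \emph{navigation lemma}) is: every $\zeta\in V$ reaches $B$ in $\mathcal G$. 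Granting it, combining with (ii) gives that every $\zeta$ admitting an $n$-step self-avoiding predecessor --- call that set $V'$ --- is \emph{reached from} $B$ in $\mathcal G$; together with (i) this shows that $\mathcal G$ restricted to $W:=V\cap V'$ is strongly connected and contains $B$. Since any non-trivial strongly connected component of $\mathcal G$ has vertices with both in- and out-edges, it lies in $W$, hence equals $W$: so $W$ is the \emph{only} non-trivial component, and every vertex outside $W$ lies in a trivial (loopless, spectral radius $0$) component that only feeds forward into $W$.

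The conclusion is then routine linear algebra. The block $\tilde M|_W$ is irreducible, so by Perron--Frobenius it has a strictly positive eigenvector $v_W$, unique up to scaling, with eigenvalue $\rho(\tilde M|_W)=:Z>0$; one checks that for $\zeta\in W$ every $\omega\in V$ escaping $\zeta$ again lies in $W$, so $v_W$ really is an eigenvector of this block. To extend to the transient coordinates $V\setminus W$ one solves $(Z\,\mathrm{Id}-N)x=b$, where $N=\tilde M$ restricted to the acyclic set $V\setminus W$ is nilpotent (so $Z\,\mathrm{Id}-N$ is invertible) and $b\ge 0$ collects the contributions from $W$; since $(Z\,\mathrm{Id}-N)^{-1}=\sum_{k\ge 0}Z^{-k-1}N^k\ge 0$, the extension is nonnegative. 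Normalizing gives a measure $P_n$ with \eqref{eq:dmpaltfinite} and $1/Z$. For uniqueness: any probability-vector eigenvector $p$ of $M$ with positive eigenvalue has support containing a cycle, hence containing the unique non-trivial component $W$; a short argument shows $p>0$ on all of $W$, so $p|_W$ is a positive eigenvector of the irreducible block $\tilde M|_W$, forcing its eigenvalue to be $Z$ and $p|_W$ to be a multiple of $v_W$; the values on $V\setminus W$ are then determined by the same triangular system, so $p$ is (a rescaling of) the vector constructed above, and $P_n$ is unique.

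The main obstacle is the navigation lemma. The intended proof is geometric: given $\zeta\in V$, use an extension of $\zeta$ to move, in one or more legal $\mathcal G$-steps, the endpoint out into free space far from the bounded region occupied by $\zeta$, and then append a straight segment in the $e_1$-direction --- which is a bridge. The delicate points are that each intermediate walk must itself be extendable (so that it has out-edges in $\mathcal G$) and that the construction must succeed uniformly over all $\zeta\in V$ no matter how $\zeta$ hems in its endpoint; making this precise is where the real work lies, and it is the only step that goes beyond linear algebra and the definition of a bridge.
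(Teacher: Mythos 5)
Your approach is essentially the paper's: reformulate \eqref{eq:dmpaltfinite} as an eigenvalue problem for the $0$--$1$ escape matrix, cut down to the block indexed by the paths that have both an in-edge and an out-edge (your $W=V\cap V'$ is exactly the paper's $\widetilde{\SAW}_n$), establish primitivity of that block, and apply Perron--Frobenius. Two details differ. Your auxiliary treatment of $V\setminus W$ is more elaborate than needed: for $\zeta,\zeta'\in V\setminus W$ one cannot have $\zeta'\esc\zeta$, since then $\zeta'$ would admit $\zeta$ as a predecessor and so lie in $V'$; hence your $N$ is the zero matrix, not merely nilpotent, and the paper sidesteps these coordinates altogether by deleting zero rows \emph{and} zero columns from $A_n$ at the outset and noting that this preserves nonzero eigenvalues and eigenspaces. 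More substantively, the step you single out as the real obstacle --- the navigation lemma that every extendable $\zeta$ reaches an $e_1$-bridge, combined with reversal and the fact that bridges concatenate --- is precisely the irreducibility claim the paper dismisses in one sentence as ``clear,'' namely that for any $\zeta,\xi\in\widetilde{\SAW}_n$ there exist $\omega_1,\dots,\omega_{k-1}$ with $\zeta\oplus\omega_1\oplus\cdots\oplus\omega_{k-1}\oplus\xi$ self-avoiding. You are right that this geometric fact is the one non-trivial input; the paper does not supply a proof of it either, so your proposal follows the same route and sits at the same level of completeness as the published argument.
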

\begin{proof}
We consider the space $\R^{\SAW_n}$, with coordinates indexed by the self-avoiding paths. Let $A_n$ be the $c_n\times c_n$ matrix given by $A_n(\zeta,\xi)=\1_{\xi\esc\zeta}$. If we view probability measures on $\SAW_n$ as vectors in $\R^{\SAW_n}$, we can rewrite \eqref{eq:dmpaltfinite} as
\begin{equation}
    (A_nP_n)(\zeta)=\sum_{\xi\in\SAW_n}A_n(\zeta,\xi)P_n(\xi)=\sum_{\xi\in\SAW_n}\1_{\{\xi\esc\zeta\}}P_n(\xi)=P_n(\eta\esc\zeta)=ZP_n(\zeta).
\end{equation}
So any probability measure $P_n$ satisfies \eqref{eq:dmpaltfinite} if and only if it is an eigenvector of $A_n$ with eigenvalue $Z$ such that $P_n\geq0$ and $\|P_n\|_1=1$. Now let $\widetilde{\SAW}_n=\{\zeta\in\SAW_n\colon\exists\xi_1,\xi_2\text{ such that }A_n(\zeta,\xi_1)=1,\,A_n(\xi_2,\zeta)=1\}$, i.e., the space of all $n$-step self-avoiding paths such that the beginning and end are not trapped. Define $\widetilde{A}_n$ to be the restriction of $A_n$ onto $\widetilde{\SAW}_n$. The matrix $\widetilde A_n$ is simply the matrix $A_n$ with all coordinates removed that correspond to a zero row or column. So $A_n$ and $\widetilde A_n$ have the same nonzero eigenvalues and the corresponding eigenspaces are isomorphic.

Note that $\widetilde{A}_n^k(\zeta,\xi)>0$ if and only if there exist paths $\omega_1,\ldots,\omega_{k-1}$ such that 
\begin{equation}
    \widetilde A_n(\zeta,\omega_1\widetilde )A_n(\omega_1,\omega_2)\cdots \widetilde A(\omega_{k-2},\omega_{k-1})\widetilde A(\omega_{k-1},\xi)=1,
\end{equation}
i.e., $\zeta\oplus\omega_1\oplus\cdots\oplus\omega_{k-1}\oplus\xi$ results in a self-avoiding path. It is clear that for every two self-avoiding paths $\zeta,\xi\in\widetilde{\SAW}_n$, there exist $k$ and $\omega_1,\ldots,\omega_{k-1}$ such that $\zeta\oplus\omega_1\oplus\cdots\oplus\omega_{k-1}\oplus\xi$ is a self-avoiding path. Furthermore, there exists $\zeta$ such that $\widetilde A_n(\zeta,\zeta)=1$, for example if $\zeta$ is a straight line. So there exists a power $k$ such that $\widetilde A^k$ is positive, in other words $\widetilde A_n$ is a primitive matrix. So by the Perron-Frobenius theorem for primitive matrices, $\widetilde A_n$ has a unique positive eigenvector $\widetilde P_n$ with $\|\widetilde P_n\|_1=1$. Hence, $A_n$ also has a unique positive eigenvector $P_n$ such that $\|P_n\|_1=1$, which concludes the proof.
\end{proof}

If we consider the finite version $F_n$ of the operator $F$ on the space $\SAW_n$ defined as 
\begin{equation}
	(F_nP)(\eta=\zeta)=\frac{P(\eta\esc\zeta)}{\sum_{\xi\in\SAW_n}P(\eta\esc\xi)},
\end{equation}
then Proposition \ref{prop:dmpaltfinite} says that $F_n$ has a unique fixed point. Furthermore, if we interpret $F_n$ as a function that sends measures on $\SAW_\infty$ to measures on $\SAW_n$, then $F_n$ converges pointwise to $F$ in the weak topology on measures. However, this is not sufficient to conclude that $F$ also has a unique fixed point.

Note that $P_n$ does not equal the law of the $n$-step SAW. We do expect that $P_n$ converges weakly to the infinite SAW. In fact, if $P'_n$ is the projection of the law of the infinite SAW onto the first $n$ coordinates, then one would expect $\|P_n-P'_n\|_{\TV}\rightarrow0$.

\paragraph{Acknowledgements} The author thanks Perla Sousi for her guidance, the many mathematical discussions and her helpful feedback on this paper. The author also thanks Gordon Slade and Yucheng Liu for insightful discussions and comments. This work was supported by the University of Cambridge Harding Distinguished Postgraduate Scholarship Programme

\bibliographystyle{plain}
\bibliography{referenties}

\begin{thebibliography}{10}

\bibitem{bauerschmidt12lectures}
R.~Bauerschmidt, H.~Duminil-Copin, J.~Goodman, and G.~Slade.
\newblock Lectures on self-avoiding walks.
\newblock {\em Probability and Statistical Physics in Two and More Dimensions
  (D. Ellwood, CM Newman, V. Sidoravicius, and W. Werner, eds.), Clay
  Mathematics Institute Proceedings}, 15:395--476, 2012.

\bibitem{flory53principles}
P.~Flory.
\newblock {\em Principles of polymer chemistry}.
\newblock Cornell University Press, 1953.

\bibitem{gray09probability}
R.M. Gray.
\newblock {\em Probability, random processes, and ergodic properties}.
\newblock Springer Science \& Business Media, 2009.

\bibitem{hara08decay}
T.~Hara.
\newblock Decay of correlations in nearest-neighbor self-avoiding walk,
  percolation, lattice trees and animals.
\newblock {\em Ann. Probab.}, 36(2):530--593, 2008.

\bibitem{hara92selfavoiding}
T.~Hara and G.~Slade.
\newblock {Self-avoiding walk in five or more dimensions I. The critical
  behaviour}.
\newblock {\em Commun. Math. Phys.}, 147:101--136, 1992.

\bibitem{hofstad04incipient}
R.~{van der} Hofstad and A.A. J{\'a}rai.
\newblock The incipient infinite cluster for high-dimensional unoriented
  percolation.
\newblock {\em J. Stat. Phys.}, 114(3):625--663, 2004.

\bibitem{kesten63number}
H.~Kesten.
\newblock On the number of self-avoiding walks.
\newblock {\em J. Math. Phys.}, 4(7):960--969, 1963.

\bibitem{lawler83connective}
G.F. Lawler.
\newblock A connective constant for loop-erased self-avoiding random walk.
\newblock {\em J. App. Probab.}, 20(2):264--276, 1983.

\bibitem{lawler89infinite}
G.F. Lawler.
\newblock The infinite self-avoiding walk in high dimensions.
\newblock {\em Ann. Probab.}, 17(4):1367--1376, 1989.

\bibitem{lawler04scaling}
G.F. Lawler, O.~Schramm, and W.~Werner.
\newblock On the scaling limit of planar self-avoiding walk.
\newblock {\em Proc. Symposia Pure Math}, 72:339--364, 2004.

\bibitem{madras93selfavoiding}
N.~Madras and G.~Slade.
\newblock {\em The self-avoiding walk}.
\newblock {Birkh\"auser}, 1993.

\bibitem{markering24law}
M.~Markering.
\newblock Capacity of loop-erased random walk.
\newblock 2024+.
\newblock To appear.

\bibitem{orr47statistical}
W.J.C. Orr.
\newblock Statistical treatment of polymer solutions at infinite dilution.
\newblock {\em J. Chem. Soc. Faraday Trans.}, 43:12--27, 1947.

\bibitem{slade89scaling}
G.~Slade.
\newblock The scaling limit of self-avoiding random walk in high dimensions.
\newblock {\em Ann. Probab.}, pages 91--107, 1989.

\bibitem{slade2004lace}
G.~Slade.
\newblock {\em The lace expansion and its applications}.
\newblock Springer, 2004.

\end{thebibliography}
\end{document}